\def \R{\mathbb R}
\newtheorem{thm}{Theorem}[section]
\newtheorem{rem}{Remark}
\newtheorem{theorem}[thm]{Theorem}
\newtheorem{lemma}[thm]{Lemma}
\newtheorem{proposition}[thm]{Proposition}
\newtheorem{assump}{Assumption}
\newcommand{\norm}[1]{\left \| #1 \right \|}
\renewcommand{\tilde}{\widetilde}
\newcommand{\balpha}{\boldsymbol\alpha}
\newcommand{\bbeta}{\boldsymbol\beta}
\newcommand{\bupsilon}{\boldsymbol\upsilon}
\newcommand{\bgamma}{\boldsymbol\gamma}
\numberwithin{equation}{section}
\title{Metastability of Waves, Patterns and Oscillations Subject to Spatially-Extended Noise}
\author{J. MacLaurin\footnotemark[1]}
\begin{document}
\maketitle
\newcommand{\slugmaster}{%
\slugger{siads}{xxxx}{xx}{x}{x--x}}

\begin{abstract}
In this paper we present a general framework in which one can rigorously study the effect of spatio-temporal noise on traveling waves, stationary patterns and oscillations that are invariant under the action of a finite-dimensional set of continuous isometries (such as translation or rotation). This formalism can accommodate patterns, waves and oscillations in reaction-diffusion systems and neural field equations. To do this, we define the phase by precisely projecting the infinite-dimensional system onto the manifold of isometries. Two differing types of stochastic phase dynamics are defined: (i) a variational phase, obtained by insisting that the difference between the projection and the original solution is orthogonal to the non-decaying eigenmodes, and (ii) an isochronal phase, defined as the limiting point on manifold obtained by taking $t\to\infty$ in the absence of noise. We outline precise stochastic differential equations for both types of phase. The variational phase SDE is then used to show that the probability of the system leaving the attracting basin of the manifold after an exponentially long period of time (in $\epsilon^{-2}$, the magnitude of the noise) is exponentially unlikely. In the case that the manifold is periodic (such as for spiral waves, spatially-distributed oscillations, or neural-field phenomena on a compact domain), the isochronal phase SDE is used to determine asymptotic limits for the average occupation times of the phase as it wanders in the basin of attraction of the manifold over very long times. In particular, we find that frequently the correlation structure of the noise biases the wandering in a particular direction, such that the noise induces a slow oscillation that would not be present in the absence of noise.
\end{abstract}
\renewcommand{\thefootnote}{\fnsymbol{footnote}}
\footnotetext[1]{New Jersey Institute of Technology. james.n.maclaurin@njit.edu}
\section{Introduction}
Spatially-extended patterns and waves are ubiquitous in the biological and physical sciences and are a key lens through which emergent phenomena are understood. Furthermore biology is typically very noisy, and thus it is of great importance to understand the effect of stochasticity on these patterns and waves \cite{Panja2004,Lindner2004,Sagues2007}. The literature on stochastic patterns and waves includes general Turing patterns \cite{Biancalani2017}, the Allen-Cahn / Cahn-Hilliard equation \cite{Grafke2017}, waves and patterns in the stochastic Brusselator \cite{Biancalani2010,Biancalani2011}, patterns in neural fields \cite{Hutt2008,Fung2010,Kilpatrick2013a,Gong2013,Thul2016,Avitable2017,Kuehn2019,Bressloff2019e,MacLaurin2020a}, interfaces in the Ginzburg-Landau equation \cite{Brassesco1995,Harris2006}, the stochastic burger's equation \cite{Blomker2007} and the effect of spatially-distributed noise on traveling waves \cite{Mikhailov1983,Armero1998,Cartwright2019}, such as the FKPP traveling waves \cite{Doering2003,Brunet2006}, invasion waves in ecology \cite{Lewis2000}, the stochastic Nagumo equation \cite{Kruger2017,Hamster2020,Geldhauser2020}, geometric waves \cite{Zdzislaw2019} and numerical methods for stochastic traveling waves \cite{Lord2012}. Good reviews of the literature on the effect of noise on traveling waves can be found in \cite{Panja2004,Sagues2007,Kuehn2019a}.

Consider the deterministic unforced system with solution $u_t \in \mathcal{C}_b(\mathbb{R}^d , \mathbb{R}^N)$ (the Banach space of continuous bounded $\R^N$-valued functions over $\R^d$, for $N,d\geq1$)
\begin{equation}\label{eq: deterministic dynamics}
\frac{d u_t(x)}{d t}= A u_t(x) + f(u_t)(x)
\end{equation}
Here $A$ is a linear operator, such as the Laplacian, and $f$ is a continuous function that is locally bounded. Frequently the fixed points of such systems possess several symmetries (typically invariance under translation and / or rotation), and there exists a smooth attracting manifold of fixed points
(parameterized by $(\varphi_{\balpha})_{\balpha\in\R^m} \subset  \mathcal{C}_b(\mathbb{R}^d , \mathbb{R}^N))$ such that
\begin{equation}
\label{stationary equation}
A\varphi_{\balpha}+ f(\varphi_{\balpha}) = 0,\ \forall \balpha\in\R^m.
\end{equation}
The thrust of this article is to drive \eqref{eq: deterministic dynamics} by space-time white noise and understand how the noise shapes the dynamics in the basin of attraction of the manifold $(\varphi_{\balpha})_{\balpha\in\R^m} $. As such, we consider the following stochastic evolution equation driven by space-time noise $W_t$ \footnote{A cylindrical $H$-valued Wiener process.}. The stochastic evolution equation takes values in the Banach space $E_0 := \varphi_{0} + H$, where $H$ is a Hilbert Space, and has the form
\begin{equation}
\label{main}
du_t = [Au_t + f(u_t)]dt + \varepsilon B(t , u_t)dW_t,
\end{equation}
for some $\varepsilon>0$.   In \cite{inglis2016general} we developed a method of precisely projecting the stochastic dynamics onto the manifold parameterized by $\lbrace \varphi_{\balpha} \rbrace_{\balpha \in \mathbb{R}^m}$, generalizing deterministic work on the orbital stability of a manifold of fixed points (see  \cite[Chapter 5]{Volpert1994}  and \cite[Chapter 4]{Kapitula2013}) to a stochastic setting. This paper builds on the work of \cite{inglis2016general} and has three main aims: (i) to obtain an SDE yielding a precise projected phase dynamics, (ii) obtain accurate probabilistic exit-time estimates over long periods of time and (iii) to obtain long-time `occupation-time' estimates for the wandering of the stochastic phase on the manifold $\mathbb{R}^m$. In more detail:
\begin{itemize}
\item In \cite{inglis2016general} the manifold was taken to be one-dimensional; in this paper the manifold is multidimensional.
\item In \cite{inglis2016general} the stability estimates in Section 6 require the linearization of \eqref{stationary equation} to be self-adjoint and immediately contractive. This assumption is relaxed, and now the main requirement is that it possesses a spectral gap (except for the neutral eigenmodes tangential to the manifold in \eqref{stationary equation}).
\item In \cite{inglis2016general} a `variational phase' SDE was obtained. In this work an SDE for the variational phase is also obtained, and an additional `isochronal phase' SDE is also obtained.
\item In \cite{inglis2016general} the noise was additive and a `Q-Wiener process' \cite{DaPrato2014}, meaning in effect that there is sufficient spatial correlation that the space-time noise is spatially-continuous. In this paper we consider multiplicative noise, and when the operator $A$ is sufficiently smoothing (such as when $A$ is the sum of a Laplacian and possibly first order derivatives), the noise can be `cylindrical', meaning that the driving noise is spatially-decorrelated and spatially-discontinuous. 
\item The bound on the growth of the error in \cite[Corollary 6.3]{inglis2016general} is suboptimal. This bound was greatly improved in \cite{MacLaurin2020a} for `bumps' of activity in stochastic neural fields. In this work we use a similar method to \cite{MacLaurin2020a} to show that the probability of the system leaving the manifold of translated bump solutions over an exponentially long period of time (i.e. $T \simeq \exp( C\epsilon^{-2})$) is exponentially unlikely. In other words, we are in the Large Deviations regime \cite{Salins2019}.
\end{itemize}

After one has shown that, with very high probability, the system spends a very long period of time in the neighborhood of the manifold, it is natural to investigate the `occupation time': that is, the typical proportion of time that it spends in particular neighborhoods of the manifold over long time intervals (see \cite{Donsker1975} for a classical result on the occupation time of Markov chains). Doing this yields an understanding of how the correlation structure of the noise interacts with the geometry of manifold $\lbrace \varphi_{\balpha}\rbrace_{\balpha \in \mathbb{R}^m}$ to shape the wandering of the stochastic phase over long periods of time. Under the assumption that the manifold is periodic, we find that the natural timescale for the induced phase dynamics on the manifold is $\epsilon^{-2}t$ ($\epsilon$ being the magnitude of the white noise), and we are then able to demonstrate that the occupation time converges to that of the invariant measure of the rescaled process (modulo the periodicity). In many cases this result implies that the noise correlation structure can induce a slow \textit{oscillation} in the phase over long periods of time. To demonstrate the convergence of the occupation time, we must employ a slightly different phase that is analogous to the isochronal phase definition for stochastic oscillators \cite{Ermentrout2010}. It is worth comparing the results of this paper to those of Blomker \cite{Blomker2007a}. He uses a rescaling of time to understand the projection of fluctuations in SPDEs onto a manifold that is weakly unstable (by contrast, this paper concerns a projection onto a stable manifold of fixed points).

To the best of this author's knowledge, the first rigorous work on the behavior of stochastic systems near an attracting manifold is that of Katzenberger \cite{Katzenberger1991}. He determined limiting equations for a finite-dimensional jump Markov process pulled onto a manifold by a large drift. This theory has been applied in finite-dimensional stochastic models of population dynamics, including in \cite{Constable2013,Parsons2017,Grafke2017,Popovic2020}. In the deterministic literature, there is a well developed literature on the orbital stability of finite-dimensional submanifolds in infinite-dimensional spaces, including work on the Ginzburg-Landau equation \cite{Beyn2004}, traveling waves \cite{Kapitula2013,Volpert1994}, and spiral waves \cite{Sandstede1997}: see \cite{Volpert1994,Kapitula2013} for many more examples.

It is well-established \cite{Armero1998,Panja2004,Gottwald2004,Brunet2006,Sagues2007,Bressloff2012} that for traveling waves perturbed by space-time noise, one can identify the leading order diffusive flux of the wave position $\beta_t$ over timescales of $O(\epsilon^{-2})$ by matching the leading order terms in the equation 
\begin{equation}
\langle u_t - \varphi_{\beta_t} , \psi_{\beta_t} \rangle \simeq 0.\label{eq: approximate wave position}
\end{equation}
 Here $\psi_{\beta_t}$ is the neutral eigenvector of the adjoint of the linearization about the traveling wave positioned at $\beta_t$. This is directly motivated by deterministic theory (see Chapter 4 of \cite{Kapitula2013} and Chapter 5 of \cite{Volpert1994}). In stochastic systems, this technique has been employed in reaction-diffusion traveling fronts by \cite{Armero1998,Panja2004,Brunet2006,Sagues2007,inglis2016general,Cartwright2019,Hamster2020,Hamster2020a,Hamster2020b}, in stochastic neural fields by Bressloff and co-workers \cite{Bressloff2012,inglis2016general,MacLaurin2020a} and Kilpatrick and Ermentrout \cite{Kilpatrick2013a}. Hamster and Hupkes  \cite{Hamster2020a} demonstrate that the probability of leaving a neighborhood of the manifold over an exponentially long period of time goes to zero as the noise strength goes to zero. The work of Cartwright and Gottwald \cite{Cartwright2019} is interesting because the manifold they project onto also includes a non-neutral eigenmode, and there is therefore potential for their ansatz to be accurate over longer periods of time.
 
Despite these strengths, none of the works in the previous paragraph (with the exception of \cite{MacLaurin2020a}) determine precise expressions for the quadratic variation (i.e. $d\beta_t d\beta_t$) and cross-variation (i.e. $d\beta_t du_t$) terms that would be necessary for \eqref{eq: approximate wave position} to be satisfied exactly, and this creates problems once one wishes to precisely understand the phase dynamics over longer timescales (that diverge on timescales greater than $O(\epsilon^{-2})$). Indeed in the deterministic setting, one knows from the implicit function theorem that \eqref{eq: approximate wave position} can be solved for $\beta_t$ in some neighborhood only if $\partial / \partial \beta_t \lbrace \langle u_t - \varphi_{\beta_t} , \psi_{\beta_t} \rangle \rbrace \neq 0$, and if $\partial / \partial \beta_t \lbrace \langle u_t - \varphi_{\beta_t} , \psi_{\beta_t} \rangle \rbrace$ asymptotes to zero as $t$ approaches some limit, then the coefficients in the ODE for $\beta_t$ will blow up. It was demonstrated in \cite{inglis2016general} that this blowup can also occur in the stochastic setting: in the SDE for the phase $d\beta_t$, both the drift and the diffusion coefficient blowup if $\partial / \partial \beta_t \lbrace \langle u_t - \varphi_{\beta_t} , \psi_{\beta_t} \rangle \rbrace$ approaches zero. One cannot \textit{a priori} rule out $\partial / \partial \beta_t \lbrace \langle u_t - \varphi_{\beta_t} , \psi_{\beta_t} \rangle \rbrace$ asymptoting to zero in the stochastic setting (by definition, noisy systems exhibit a diversity of behavior, with various degrees of probability); instead one must try to derive accurate bounds on the probability of this not occurring. Indeed recent work by this author and Bressloff \cite{MacLaurin2020a} has solved \eqref{eq: approximate wave position} exactly for a neural field equation, and used this to prove that the probability of the system leaving a neighborhood of the manifold of bump solutions after an exponentially long period of time (i.e. $T \simeq \exp(C \epsilon^{-2})$) is exponentially small (i.e. $\text{Prob} \simeq \exp(-C\epsilon^{-2})$). In other words, instead of just determining the statistics for the leading order diffusion of the wavefront, MacLaurin and Bressloff determine an exact nonlinear SDE, coupled in the same space as the driving noise, and which is accurate over very long timescales. In summary, for a rigorous and accurate long-time expression for the phase SDE, one desires (i) an exact solution to \eqref{eq: approximate wave position} (we also determine an `isochronal phase' in Section , that agrees with \eqref{eq: approximate wave position} to leading order), and (ii) control over $\partial / \partial \beta_t \lbrace \langle u_t - \varphi_{\beta_t} , \psi_{\beta_t} \rangle \rbrace$ going to zero (this is encapsulated in the stopping time $\tau$) and (iii) strong bounds on the fluctuations of the component of the solution orthogonal to the manifold.
 
Prior to the work of this author in \cite{inglis2016general}, Stannat and co-workers in \cite{stannat2014,Kruger2017} (for reaction-diffusion systems) and \cite{kruger2014} (for neural field systems) approximated the dynamics of the phase $\beta_t$ by an ordinary differential equation with $\frac{d\beta_t}{dt}$ proportional to $\frac{\partial}{\partial\beta_t}\|u_t - \varphi_{\beta_t}\|^2$. They then decomposed $u_t - \varphi_{\beta_t}$ into an Ornstein-Uhlenbeck Process plus a remainder term of lower order in $\epsilon$. They rigorously proved that this ansatz is accurate in the small $\epsilon$ limit.

There exists a literature on the first-exit-time from attracting wells in infinite-dimensional stochastic systems \cite{Berglund2013,Barret2015}, generalizing the classical Kramer's Law (see \cite{Dembo1998}) to an infinite dimensional setting. Usually one obtains these estimates through proving a Large Deviations principle for the system \cite{Freidlin2012,Sowers1992,Cerrai2004,Kuehn2014,Salins2019,Zdzislaw2019}. A helpful explanation of how a Large Deviations Principles usually implies that a system spends an exponentially long period of time in the neighbourhood of the attracting fixed point can be found in \cite[Section 5.6]{Dembo1998}: these estimates were originally derived by Freidlin and Wentzell \cite{Freidlin2012}. A recent preprint of Salins and Spiliopoulos \cite{Salins2019a} has determined the exponential asymptotics of the first exit time from the attracting basin of a fixed point in an SPDE. By contrast, the asymptotics in this paper study the first exit time from the attracting basin of a smooth manifold of fixed points.

The organization of the paper is as follows. In Section \ref{General Setting} we describe the general setting we consider, and outline the necessary assumptions.   Section \ref{examples} then goes on to list some examples that fit into the general setting, including reaction-diffusion traveling waves, spiral waves, spatially-distributed oscillations and patterns in neural fields.  In Section 4 we define two different phase equations: the variational phase, obtained by defining the position of the pattern / wave to be such that the orthogonal amplitude is precisely perpendicular to the neutral eigenvectors of the adjoint operator, and the isochronal phase, obtained by removing noise from the system and taking $t\to\infty$. In Section \ref{Section Long Time Stability} we demonstrate that the probability of the system leaving a neighborhood of the manifold before an exponentially long period of time is exponentially unlikely. Finally in Section \ref{Section Ergodicity}, assuming the manifold to be periodic, we determine the limiting probability distribution for the phase (modulo the periodicity), and the average shift induced in the phase by the noise over long periods of time.

\vspace{0.3cm}
\noindent\textit{Notation:}
As usual, $\mathcal{C}(\R^d)$ and $\mathcal{C}^\infty(\R^d)$ will denote the spaces of real-valued functions on $\R^d$ that are continuous and smooth respectively.  Moreover $L^p(\R^d)$ ($p\geq1$), will be the space of $p$-integrable functions with respect to the Lebesgue measure on $\R^d$.  Finally, for general Banach spaces $E_1, E_2$, we will denote by $\mathcal{L}(E_1, E_2)$ the space of bounded linear operators $:E_1 \to E_2$. $\mathcal{L}(E_1,E_2)$ is equipped with the operator norm.

Let $H:=[L^2(\R^d)]^N$, equipped with the standard inner product denoted by $\langle\cdot, \cdot \rangle$ and norm $\|\cdot\|$. Let $\mathcal{L}_{HS}$ be the space of all linear Hilbert-Schmidt operators $H \to H$, with the norm of $B \in \mathcal{L}_{HS}$ written as
\begin{equation}\label{eq: HS norm}
\norm{B}_{HS} = \sum_{j=1}^\infty \langle e_j , B e_j \rangle,
\end{equation}
where $\lbrace e_j \rbrace_{j\in\mathbb{Z}^+}$ is any orthonormal basis for $H$. The operator norm of any $U \in \mathcal{L}(H,H)$ is written as
\begin{equation}
\norm{U}_{\mathcal{L}} = \sup\big\lbrace \norm{Uz} \; : \norm{z} = 1 \big\rbrace.
\end{equation}
Let $E:= \varphi_0 + H_{0}$ (i.e. $u\in E$ if and only if $u = \varphi_0 + v$ for some $v$ in $H_0$), endowed with the topology inherited from $H_0$. For $y,z \in E$, with $y = \varphi_0 + y_0$ and $z = \varphi_0 + z_0$, we write $\norm{y-z}_0 := \norm{y_0 - z_0}_0$.

\section{Problem Setup}
\label{General Setting}

\label{sec:stoch wave}

We consider the system in \eqref{eq: deterministic dynamics}, forced by multiplicative spatially-extended white noise. This yields an $E$-valued stochastic differential equation ($E = \varphi_0 + H$), where 
\begin{equation}
\label{main}
du_t = [Au_t + f(u_t)]dt + \varepsilon B(t , u_t)dW_t,
\end{equation}
for some $\varepsilon>0$. For each $t\geq 0$ and $u_t \in E$, $B(t,u_t)$ is a linear operator on $H$: it specifies the multiplicative effect of the noise. We take the initial condition to be $u_0 := \varphi_{\bar{\bbeta}}$ for some constant $\bar{\bbeta} \in \mathbb{R}^m$ (one could easily obtain analogous results by taking $u_0$ to be in a small neighborhood about  $ \varphi_{\bar{\bbeta}}$). $A$ is a linear operator (such as the Laplacian, for reaction-diffusion systems) and $f$ is a Lipschitz nonlinear function on $E$. As noted in the introduction, we assume that $\lbrace \varphi_{\balpha} \rbrace_{\balpha \in \mathbb{R}^m}$ constitute a manifold of fixed points, such that
\begin{equation}
\label{stationary equation}
A\varphi_{\balpha}+ f(\varphi_{\balpha}) = 0,\ \forall \balpha\in\R^m.
\end{equation}
$W_t$ is `space-time white noise', such that formally, for any $v,y \in H$,
\begin{equation}\label{eq: white noise definition}
\mathbb{E}\big[ \langle v , W_t \rangle \big] = 0 \; \; , \; \; \mathbb{E}\big[ \langle v , W_t \rangle \langle y, W_t \rangle \big] = t\langle v,y \rangle.
\end{equation}
Some care is required to make precise sense of (i) the definition of the white noise in \eqref{eq: white noise definition} and (ii) what one means by a solution to \eqref{main}. If the white noise were to be completely spatially decorrelated, then one cannot take a spatial derivative, and one therefore cannot use standard partial differential equation theory to make sense of a solution to \eqref{main}. Indeed one cannot even properly define an $H$-valued Gaussian random variable satisfying the properties in \eqref{eq: white noise definition}. However if $A$ is the Laplacian, then one knows that it works to smooth functions. Indeed since $\frac{d^2}{dx^2}\cos( a x) = -a^2\cos(ax)$ and $\frac{d^2}{dx^2}\sin( a x) = -a^2\sin(ax)$, one knows that the highly fluctuating components of a solution get strongly damped by the Laplacian. Thus the solution that we are going to define can be thought of as taking the limit of \eqref{main} for increasingly decorrelated noise, but such that a sensible limit is obtained because high wavenumber fluctuations get damped down by the smoothing action of $A$. The theory of stochastic partial differential equations has been developed to make precise sense of this limit \cite{DaPrato2014,Liu2015}. 

Formally, we define $W_t$ to be a cylindrical $H$-valued Wiener process on the filtered probability space $(\Omega, \mathcal{F}, \{\mathcal{F}_t\}_{t\geq0}, \mathbb{P})$ . The solution described in the previous paragraph can be precisely defined by employing a stochastic analog of the variation-of-constants solution in PDEs, as stated in the following proposition.
\begin{proposition}
\label{K-S}
Assume the assumptions of Section \ref{eq: technical}. Then stochastic evolution equation \eqref{main} has a unique mild solution, which can be decomposed (in a non-unique way) as $u_t = \varphi_{\bar{\bbeta}} + v^{\bar{\bbeta}}_t$ where $(v^{\bar{\bbeta}}_t)_{t\geq0}$ is the unique weak (and mild) $H$-valued solution to
\[
dv^{\bar{\bbeta}}_t  = [Av^{\bar{\bbeta}}_t+ f(\varphi_{\bar{\bbeta}} + v^{\bar{\bbeta}}_t) -  f(\varphi_{\bar{\bbeta}})]dt + \varepsilon B(t , u_t) dW_t, \quad t\geq0,
\]
with initial condition $v^{\bar{\bbeta}} = 0$ i.e.
\[
v^{\bar{\bbeta}}_t  =  \int_0^tP^A_{t-s}\left[f(\varphi_{\bar{\bbeta}} + v^{\bar{\bbeta}}_s) -  f(\varphi_{\bar{\bbeta}})\right]ds + \varepsilon\int_0^t P^A_{t-s}B(s, u_s) dW_s, \quad t\geq0.
\]
and $(P^A_t)_{t\geq0}$ is the semigroup generated by $A$.
\end{proposition}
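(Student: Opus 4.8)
The plan is to reduce \eqref{main} to a semilinear stochastic evolution equation in $H$ with a globally Lipschitz nonlinearity that vanishes at the origin, to apply the standard mild-solution theory for such equations (as developed in \cite{DaPrato2014,Liu2015}), and then to read off the stated decomposition as a change of unknown. First I would record that the substitution $u_t = \varphi_{\bar{\bbeta}} + v_t$ is exact: invoking the stationarity identity \eqref{stationary equation} at $\balpha = \bar{\bbeta}$,
\begin{equation*}
Au_t + f(u_t) = Av_t + \big(f(\varphi_{\bar{\bbeta}} + v_t) - f(\varphi_{\bar{\bbeta}})\big) + \big(A\varphi_{\bar{\bbeta}} + f(\varphi_{\bar{\bbeta}})\big) = Av_t + f(\varphi_{\bar{\bbeta}} + v_t) - f(\varphi_{\bar{\bbeta}}),
\end{equation*}
while the initial condition $u_0 = \varphi_{\bar{\bbeta}}$ becomes $v_0 = 0$ and $B(t,u_t) = B(t,\varphi_{\bar{\bbeta}} + v_t)$. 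Thus $u$ is a mild solution of \eqref{main} with $u_0 = \varphi_{\bar{\bbeta}}$ if and only if $v := u - \varphi_{\bar{\bbeta}}$ is a mild ($H$-valued) solution of the displayed $v$-equation with $v_0 = 0$; since $v \mapsto \varphi_{\bar{\bbeta}} + v$ is a bijection between the two solution sets, existence and uniqueness transfer. (The decomposition is ``non-unique'' only in that the base point $\varphi_{\bar{\bbeta}}$, here pinned by the initial condition, is a matter of choice.) It therefore suffices to solve the $v$-equation.

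Second, I would run the Banach fixed-point argument for the mild (integral) form
\begin{equation*}
v_t = \int_0^t P^A_{t-s}\big[f(\varphi_{\bar{\bbeta}} + v_s) - f(\varphi_{\bar{\bbeta}})\big]\,ds + \varepsilon\int_0^t P^A_{t-s} B(s, \varphi_{\bar{\bbeta}} + v_s)\,dW_s
\end{equation*}
on the space of $H$-valued predictable processes with $\norm{v}_{\lambda,T}^2 := \sup_{t\in[0,T]} e^{-\lambda t}\,\mE{\norm{v_t}^2} < \infty$ for $\lambda$ large. The drift term is controlled using the global Lipschitz continuity of $f$ on $E$ and the exponential bound $\norm{P^A_t}_{\cL} \le Me^{\omega t}$; the stochastic term is controlled via the It\^o isometry for stochastic convolutions together with the Lipschitz continuity of $B$ assumed in Section \ref{eq: technical}. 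Taking $\lambda$ large (equivalently, working on a short interval and iterating) makes the resulting map a strict contraction, which yields a unique solution on $[0,\infty)$; passing to a predictable version with the stated path regularity uses the factorization method of \cite{DaPrato2014}. That a mild solution of this semilinear equation is also a weak (analytic, tested against the domain of $A^*$) solution and conversely is standard, which gives the ``weak (and mild)'' assertion.

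The step I expect to be the crux, and the one that genuinely uses the hypotheses of Section \ref{eq: technical} on the smoothing of $A$ and the admissibility of $B$, is showing that the stochastic convolution $\int_0^t P^A_{t-s}B(s,u_s)\,dW_s$ is well defined and square-integrable when $W$ is a \emph{cylindrical} (spatially decorrelated) Wiener process on $H$, i.e.\ that
\begin{equation*}
\mE{\int_0^T \big\|P^A_{t-s}B(s,u_s)\big\|_{HS}^2\,ds} < \infty .
\end{equation*}
Since for cylindrical noise $B(s,u_s)$ itself need not be Hilbert--Schmidt, this is false in general; it holds here because, in the setting of Section \ref{eq: technical} (e.g.\ $A$ a Laplacian plus lower-order terms), the semigroup is smoothing in the sense that $\norm{P^A_t B}_{HS} \le C\, t^{-\gamma}\,\norm{B}_{\cL}$ for some $\gamma < 1/2$, so that the singularity of the kernel at $s = t$ is integrable and the bound closes. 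The same $t^{-\gamma}$ estimate, combined with a Volterra/generalized-Gronwall inequality, is also what propagates the a priori $L^2$ bounds through the Picard iteration; once it is in hand, the remaining estimates are routine.
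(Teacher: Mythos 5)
Your proposal is correct and follows the same route the paper takes, which disposes of the result in a single line by citing \cite[Theorem 7.4]{DaPrato2014} together with the global Lipschitz property of $f$, the $\mathcal{C}_0$-semigroup property of $A$, and the standing assumptions on $B$. The change of variable via the stationarity identity \eqref{stationary equation}, the weighted-norm Banach fixed-point argument, and the smoothing/Hilbert--Schmidt estimate needed to make the stochastic convolution well defined against a cylindrical Wiener process are precisely the internals of that cited theorem, so yours is the same argument, merely unpacked rather than abbreviated.
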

\begin{proof}
The proof of this result is a straightforward application of \cite[Theorem 7.4]{DaPrato2014} using the globally Lipschitz assumption on $f$ (Assumption \ref{assump f} (ii)), the fact that $A$ generates a $\mathcal{C}_0$-semigroup on $H$ (Assumption \ref{assump A} (i)) and the assumptions on $B$ above. 
\end{proof}

\begin{rem}\label{Important Remark}
We remark that for traveling waves, \eqref{main} is in the the moving coordinate frame.  To illustrate what we mean by this, suppose again we are in the concrete situation of the standard neural field equation described in Section \ref{sec:traveling fronts}, so that there is a solution $\hat{u}(x-ct)$ to \eqref{NF} for some speed $c$.  The stochastic version of this equation with purely additive noise would then be $du_t = [-u_t + w*F(u_t)]dt + B(u_t)dW_t$.
In the moving frame (i.e. under the change of variable $x\mapsto x-ct$), the equation becomes
\[
du_t = [Au_t+ w*F(u_t)]dt + \tilde{B}(t , u_t )dW_t^Q,
\]
where as above $Au = cu' $, $u\in\mathcal{D}(A)$ and now $\tilde{B}(t , u_t)w := B( \tilde{w} )$, $\tilde{w}(x) = w(x+ct)$ for $w\in E$.
\end{rem}


\subsection{General Assumptions}\label{eq: technical}

Our assumptions on the drift dynamics are intended to resemble the assumptions in the deterministic theory outlined in \cite[Chapter 4]{Kapitula2013} as much as possible. We thus assume that the manifold of fixed points of \eqref{stationary equation} can be obtained by applying a smooth isometry $\mathcal{T}_{\balpha}: E \to E$, in the following manner,  
\begin{equation}
\varphi_{\balpha} = \mathcal{T}_{\balpha}\varphi_{0}.
\end{equation}
We employ the following assumptions on the family of isometries $\lbrace \mathcal{T}_{\balpha} \rbrace$.
\begin{assump}
For $\balpha,\bbeta \in \mathbb{R}^m$,
\begin{align}
 \mathcal{T}_{\balpha} \circ \mathcal{T}_{\bbeta} &= \mathcal{T}_{\balpha + \bbeta} \\
 A\cdot \mathcal{T}_{\balpha} &= \mathcal{T}_{\balpha}\cdot A \; \text{ and }\;
 f\big(\mathcal{T}_{\balpha}\cdot u\big) = \mathcal{T}_{\balpha}\cdot f(u).
\end{align}
\end{assump}
The following assumption on the generator is satisfied in the vast majority of interesting applications, including for $A$ being an elliptic operator (for waves and patterns in reaction diffusion systems), or hyperbolic (for traveling / rotating waves in neural field equations, in the co-moving reference frame).
\begin{assump}
\label{assump A}
The domain of $A$, i.e. $\mathcal{D}(A)$, is dense in $H$, and the restriction of $A$ to $H$ (also denoted by $A$) is the generator of a $\mathcal{C}_0$-semigroup $P^A_t$ on $H$.  \newline
\end{assump}
\begin{assump}
\label{assump f}
Assume that the nonlinear function $f$ acting in $E$ is such that:
\begin{itemize}
\item [(i)] $f$ is defined on all of $E$, and for all $u\in E$ there exists the Frechet Derivative for perturbations in $H$, written $Df(u) \in L(H,H)$, i.e.  such that for all $v \in H$,
\[
\lim_{h\to 0}\norm{\frac{f(u + hv) - f(u)}{h} - Df(u)\cdot v} = 0,
\]
and $u \to DF(u)$ is continuous.
\item[(ii)] For all $u\in E$ there exists the second Frechet Derivative $D^{(2)}f(u) \in L(H \times H,H)$ such that for all $v,w \in H$,
\[
\lim_{h\to 0}\norm{\frac{Df(u + h w)\cdot v - Df(u)\cdot v}{h} - D^{(2)}f(u)\cdot v\cdot w} = 0,
\]
and $u \to D^{(2)}f(u)$ is continuous.
\item[(iii)] The third Frechet Derivative $D^{(3)}f(u) \in L(H \times H \times H ,H)$ exists, is such that $u \to D^{(3)}f(u)$ is continuous. 
\item[(iv)] $\sup_{u\in E} \|Df(u)\|_{L(H,H)}<\infty$ (so that $H\ni v \mapsto f(\varphi_{\balpha}  + v)$ is globally Lipschitz $\forall \balpha\in\R^m$);
\item[(v)] 
\begin{align}
\sup_{u\in E, v,w\in H, \norm{v},\norm{w} \leq 1} \|D^{(2)}f(u)\cdot v \cdot w\| &< \infty \\
\sup_{u\in E, v,w,z\in H, \norm{v},\norm{w},\norm{z} \leq 1} \norm{ D^{(3)}f(u)\cdot v \cdot w \cdot z} &< \infty.
\end{align}
\end{itemize}
\end{assump}
In order that we can project the stochastic dynamics onto the manifold parameterized by $\lbrace \varphi_{\balpha} \rbrace_{\balpha \in \mathbb{R}^m}$, we require more smoothness assumptions than in the deterministic case \cite{Kapitula2013}. The reason for this is that realizations of the stochastic process are not differentiable in time (this is why the stochastic analog of the Chain Rule  - Ito's Lemma - requires second derivatives).
\begin{assump}
\label{assump varphi}
Assume that the family $(\varphi_{\balpha})_{\balpha\in\R^m}$ satisfies the following conditions.
\begin{itemize}
\item [(i)] For $1\leq i\leq m$, the derivatives $\varphi_{\balpha ,i} := [\partial /\partial \alpha_i  ] \varphi_{\balpha}$ and $\varphi_{\balpha ,ij} := [\partial^2 /\partial \alpha_i \partial \alpha_j  ] \varphi_{\balpha}$ exist (the derivatives being taken in the norm of the space $H$) and are all in the space $H$.
\item[(ii)] $\varphi_{\balpha}- \varphi_{\bbeta} \in H$ for any $\balpha,\bbeta \in \mathbb{R}^m$. (Note that $\varphi_{\balpha}$ is not necessarily in $H$)
\item[(iii)] $\balpha\mapsto \varphi_{\balpha,i}$ and $\balpha\mapsto \varphi_{\balpha,ij}$ are globally Lipschitz for all $1\leq i,j \leq m$. 
\end{itemize}
\end{assump}
Let $\mathcal{L}_{\balpha} = A + Df(\varphi_{\balpha})$ be the linearization of the drift in \eqref{main}, about $\varphi_{\balpha}$, and let $\mathcal{L}_{\balpha}^*$ be its adjoint. It follows from Assumption \ref{assump A} that $\mathcal{L}_{\balpha}$ generates a continuous semigroup $U_{\balpha}(t)$. That is, for any $z$ in the domain of $A$, $U_{\balpha}(t)\cdot z := v_t$, where $v_t$ satisfies the linear equation
\begin{equation}
\frac{dv_t}{dt} = \mathcal{L}_{\balpha}v_t,
\end{equation}
and $v_0 := z$. Now it follows from taking any directional derivative of \eqref{stationary equation} with respect to $\balpha$ that the linearized dynamics must always have a neutral eigenmode in directions tangential to the manifold $\lbrace \varphi_{\balpha} \rbrace_{\balpha \in \mathbb{R}^m}$. 

The next assumption essentially means that the linearized dynamics is stable in all other directions. Without this stability, in most circumstances the noise would quickly force the system away from the manifold $\lbrace \varphi_{\balpha} \rbrace_{\balpha \in \mathbb{R}^m}$. Because $\varphi_{\balpha} = \mathcal{T}_{\balpha}\cdot \varphi_0$, the following spectral gap property only needs to be verified for $\balpha = 0$.

\begin{assump}\label{Assump Spectral Gap}
The spectrum $\sigma(\mathcal{L}_{\balpha})$ of $\mathcal{L}_{\balpha} := A + Df(\varphi_{\balpha})$ is such that 
\[
\sigma(\mathcal{L}_{\balpha}) \subset \{\lambda\in \mathbb{C}: \mathfrak{Re}(\lambda)  \leq -b\} \cup\{0\},
\]
for some positive constant $b$, independent of $\alpha$. The eigenvalue $0$ is assumed to have multiplicity $m$, and the corresponding eigenvectors are spanned by $\lbrace \varphi_{\balpha,i} \rbrace_{1\leq i \leq m}$. 
\end{assump}

The above assumption implies that the essential spectrum of $\mathcal{L}_{\balpha}$ lies in the subset $\{\lambda\in \mathbb{C}: \mathfrak{Re}(\lambda)  \leq -b\}$. This means that $\mathcal{L}_{\balpha}$ is Fredholm of zero index, and therefore the kernel of the adjoint operator $\mathcal{L}^*_{\balpha}$ is $m$-dimensional. One can show \cite{Kapitula2013} that a basis $\lbrace \psi_{\balpha}^i \rbrace_{1\leq i \leq m}$ for the kernel of $L^*_{\balpha}$ can be chosen such that
\begin{align}
\langle \psi^i_{\balpha} , \varphi_{\balpha,j} \rangle &= 0 \text{ if }i\neq j \\
\langle \psi^i_{\balpha}, \varphi_{\balpha,i} \rangle &= 1.
\end{align}
The assumed invariance of $A$ and $f$ under the isometry $\mathcal{T}_{\balpha}$ implies that we can take
\begin{equation}
\psi^i_{\balpha} = \mathcal{T}_{\balpha} \psi^i_0.
\end{equation}

Let $P_{\balpha}: H \mapsto H$ be the following spectral projection operator that projects the kernel of $\mathcal{L}_{\balpha}^*$ onto the kernel of $\mathcal{L}_{\balpha}$, i.e.
\begin{align}
P_{\balpha}\cdot u &= \sum_{i=1}^m \langle \psi^i_{\balpha} , u \rangle  \varphi_{\balpha,i} \text{ and define }\\
\Pi_{\balpha} &= \mathcal{I} - P_{\balpha},\label{eq: Pi alpha definition}
\end{align}
where $\mathcal{I}: H \to H$ is the identity operator. Define the Resolvent Operator corresponding to $\Pi_{\balpha}$ to be, for $\lambda \in \mathbb{C}$,
\begin{equation}
\mathcal{R}_{\balpha}(\lambda) = (\lambda \mathcal{I} - \Pi_{\balpha})^{-1}.
\end{equation}
\begin{assump}
Suppose that there exists $M >0$ such that for all $\lambda \in \mathbb{C}$ with $\text{Re}(\lambda) \geq -b$ (here $b$ is the constant in Assumption \ref{Assump Spectral Gap}), and all $\balpha\in \mathbb{R}^m$,
\begin{equation}
\norm{ \mathcal{R}_{\balpha}(\lambda)} \leq M.
\end{equation}
\end{assump}
Note that once the above property is satisfied for $\balpha = 0$, it is satisfied for all $\balpha\in\mathbb{R}^m$, thanks to the fact that the latter is obtained from the former through the application of a smooth isometry. Define
\begin{equation}\label{eq: V definition}
V_{\balpha}(t) = U_{\balpha}(t) - P_{\balpha},
\end{equation}
and note that $V_{\balpha}(t) \cdot v = 0$ for any $v\in H$ such that
\[
\langle v , \psi^i_{\balpha} \rangle = 0 \text{ for all }1\leq i \leq m.
\]
It follows from the Gearhart-Pruss theorem (see \cite[Theorem 4.1.5]{Kapitula2013}) that there exists a constant $\mathfrak{c} \geq 1$ such that
\begin{equation}\label{eq: norm V bound}
\norm{V_{\balpha}(t)} \leq \mathfrak{c}\exp\big(-b t\big).
\end{equation}
This constant $\mathfrak{c}$ is independent of $\balpha$ thanks to the fact that any $\varphi_{\balpha}$ can be obtained from $\varphi_0$ through applying an isometry.

\begin{assump}
\begin{itemize}
\item [(i)] For $1\leq i,j,k \leq m$, the derivatives  $[\partial /\partial \alpha_j  ] \psi^i_{\balpha}$,  $[\partial^2 /\partial \alpha_j \partial \alpha_k ] \psi^i_{\balpha}$ exist (the derivatives being taken in the norm of the space $H$) and are all in the space $H$. They are written as (respectively) $\lbrace \psi^i_{\balpha,j}, \psi^i_{\balpha,jk} \rbrace$.
\item[(ii)] $\balpha\mapsto \psi^i_{\balpha}$, $\balpha\mapsto \psi^i_{\balpha,j}$ and $\balpha\mapsto \psi^i_{\balpha,jk}$ are all globally Lipschitz.
\item[(iii)]  Integration by parts holds i.e. $  \langle \psi^i_{\balpha,j} , \varphi_{\balpha}\rangle + \langle \psi^i_{\balpha} , \varphi_{\balpha ,j} \rangle = 0$ and $ \langle \psi^i_{\balpha,jk} , \varphi_{\balpha}\rangle + \langle \psi^i_{\balpha,j} , \varphi_{\balpha ,k} \rangle = 0$.
\item[(iv)] $\balpha \mapsto A^*\psi^i_{\balpha}$, $\balpha \mapsto A^*\psi^i_{\balpha,j}$ and $\balpha \mapsto A^*\psi^i_{\balpha,jk}$ are globally Lipschitz for each $1\leq i,j,k \leq m$. 
\end{itemize}
\end{assump}

\begin{assump}
The multiplicative noise operator is assumed to have the properties, for a constant $C_B > 0$,
\begin{align*}
B &: [0,\infty) \times E \to \mathcal{L}( H, H) \\
\norm{B(t, x) - B(t,y)}_{\mathcal{L}} &\leq C_B \norm{x-y} \text{ for all }t\geq 0 \text{ and }x,y \in E \\
\norm{B(t,x)}_{\mathcal{L}} &\leq C_B
\end{align*}
\end{assump}

The following assumption is needed to obtain the exponential moment necessary for the exit-time bound in Section \ref{Section Long Time Stability}.
\begin{assump}\label{assump exponential moment}
For any $t > s$ and any $x \in E$, $U_{\balpha}(t-s) B(s,x)$ is a Hilbert-Schmidt operator, with Hilbert-Schmidt norm (as defined in \eqref{eq: HS norm}) upperbounded by 
\begin{equation}\label{eq: HS upperbound}
\sup_{\balpha \in \mathbb{R}^m}\int_0^{T_0}(t-s)^{-2 \mathfrak{z}} \sup_{x\in E}\norm{ U_{\balpha}(t-s)B(s,x)}_{HS}^2 ds< C_{HS} ,
\end{equation}
for some $\mathfrak{z} \in (0 , 1/2)$ and constant $C_{HS} < \infty$, where
\[
T_0 = \frac{\log 4\mathfrak{c}}{b}.
\]
\end{assump}

\section{Examples}
\label{examples}
This framework applies to a huge range of stochastically-forced traveling waves, stationary patterns, spiral waves, and complex spatially-distributed patterns in neural fields. We outline some examples: further examples can be found in \cite[Section 4.6]{Kapitula2013}.
\subsection{Stationary Bump in Neural Fields on $\mathbb{S}^1$}

We consider a one-dimensional neural field on $\mathbb{S}^1$. This model was originally developed to model orientation selectivity in the primate visual cortex \cite{Medathati2017}. To the best of this author's knowledge, Kilpatrick and Ermentrout were the first to prove that the bump is linearly stable \cite{Kilpatrick2013a}. They also studied its wandering under the effect of space-time noise over long timescales. This analysis was continued in \cite{Kilpatrick2015,Bressloff2015a,MacLaurin2020a}. In fact the variational phase SDE and long-time bound on the escape probability have already been determined in \cite{MacLaurin2020a}. In some circumstances this system can be written as a gradient flow, and exact analytical formulae are available \cite{Bressloff2019e}.

The evolution is described by the neural field equation on the ring $\mathbb{S}^1$:
\begin{eqnarray}
  \tau \frac{\partial u(\theta,t)}{\partial t}&=& -u(\theta,t)+\int_{-\pi}^{\pi} J(\theta-\theta')f(u(\theta',t))d\theta' 
  \label{wc0}
\end{eqnarray}
where $u(\theta,t)$ denotes the activity at time $t$ of a local population of cells with direction
preference $\theta \in [-\pi,\pi)$, $J(\theta-\theta')$ is the strength of synaptic weights between cells with
direction preference
$\theta'$ and $\theta$. (Most applications of the ring model take $\theta \in [0,\pi]$ and interpret $\theta$ as the orientation preference of a population of neurons in primary visual cortex, see for example \cite{Medathati2017,Bressloff2002}.) 
The weight distribution is a 2$\pi$-periodic and even function of $\theta$ and thus has the cosine series expansion
\begin{equation}
\label{J} 
J(\theta)=\sum_{n=0}^NJ_n\cos (n\theta) .
\end{equation}
For analytical simplicity, we assume that there are a finite number of terms in the series expansion.
Finally, the firing rate function is taken to be a sigmoid $F(u)=\big(1+e^{-\gamma(u-\kappa)} \big)^{-1}$ with gain $\gamma$ and threshold $\kappa$. To fit the dynamics into the formalism of the previous section, we take $A\cdot u = - u$, and $f(u)(\theta) = \int_{-\pi}^{\pi} J(\theta-\theta')f(u(\theta',t))d\theta'$.  Indeed the operators $A$ and $f$ are both bounded and Lipschitz over the Hilbert space $L^2(\mathbb{S}^1)$.  

The dynamics in \eqref{wc0} is invariant under the translation by $\theta$ operator: $\big(\mathcal{T}_\theta \cdot u \big)(\theta') := u(\theta' - \theta)$, with $\theta' - \theta $ taken modulo $\mathbb{S}^1$. Ermentrout and Kilpatrick \cite{Kilpatrick2013} proved the existence and stability of a family of stationary bump solutions $\lbrace U_{\theta} \rbrace_{\theta \in \mathbb{S}^1}$, and we therefore define $\varphi_{\balpha} = U_{\balpha \mod \mathbb{S}^1}$, and take $m=1$. Since the operator $A$ is bounded, the constant $\mathfrak{c}$ is $1$ \cite{MacLaurin2020a}.

A key difference between the above neural field equation and reaction-diffusion systems it that the neural field equation does not have a Laplacian, which works to smooth spatial irregularities. This means that in order that the stochastic equation is well defined, we require that the stochastic integral $\int_0^t B(u_s)dW_s$ belongs to the Hilbert space $H$. For this to be the case, we require that for any orthonormal basis $\lbrace e_j \rbrace_{j\geq 1}$ of $H$,
\[
\sup_{u\in E}\sum_{j=1}^\infty \big\langle e_j , B(u)e_j \big\rangle < \infty.
\]
In the terminology of \cite{DaPrato2014}, $B(u)$ is a trace class operator. Some neural field equations include a Laplacian, see for instance \cite{Liley2014,Coombes2014,Mukta:2017jm}.

\subsection{Traveling fronts in neural field equations}
\label{sec:traveling fronts}
Neural field equations taking values in $\mathbb{R}$ are known as the Wilson-Cowan equations \cite{Wilson1972}. They take the form
\begin{equation}
\label{NF}
\partial_tu_t(x) = -u_t(x) + \int_{\R}w(x - y)F(u_t(y))dy, \quad t\geq 0,\ x\in\R,
\end{equation}
where $w\in \mathcal{C}(\R)\cap L^1(\R)$ is the connectivity function, and $F:\R \to \R$ is a smooth and bounded sigmoid function (known as the nonlinear gain function).
It is known (see \cite{Ermentrout1993} for example) that under some conditions on the functions $w$ and $F$ (in particular that there exist precisely three solutions to the equation $x=F(x)$ at $0, a$ and $1$ with $0<a<1$), then there exists a unique (up to translation) function $\hat{u}\in\mathcal{C}^\infty(\R)$ and speed $c\in\R$ such that $u_t(x) = \hat{u}(x - ct)$ is a solution to \eqref{NF}, where  $\hat{u}$ is such that
\[
 \lim_{x\to-\infty} \hat{u}(x) =0, \qquad \lim_{x\to\infty} \hat{u}(x) = 1,
\]
so that $\hat{u}$ is indeed a wave front.   Note that in this case $\hat{u}$ itself is not in $L^2(\R)$, but it can be shown that all derivatives of $\hat{u}$ are bounded and in $L^2(\R)$.

Substituting $\hat{u}(x - ct)$ into \eqref{NF}, we see that $\hat{u}$ is such that $0= A\hat{u} + f(\hat{u})$,
where $Au := cu' $ and $f(u) = -u + w*F(u)$, and $*$ denotes convolution as usual.
Moreover, due to translation invariance, we have that $\hat{u}_\alpha := \hat{u}(\cdot + \alpha)$ is also such that
\begin{align}
\label{deterministic solution}
0&= A\hat{u}_\alpha + f(\hat{u}_\alpha), \qquad\alpha\in\R.
\end{align}
We must thus interpret \eqref{NF} in the moving co-ordinate frame, i.e. writing $\zeta = x-ct$,
\begin{equation}
\label{NF}
\partial_t\hat{u}_t( \zeta ) = c \partial_{\zeta}u_t(\zeta)-u_t( \zeta ) + \int_{\R}w(\zeta - y)F(\hat{u}_t(y))dy, \quad t\geq 0,\ x\in\R,
\end{equation}
The traveling front solutions are fixed points of the above equation, and the family of isometries is translation. We are thus in a specific situation of the general setup described in the previous section, with $H=L^2(\R)$ and $\varphi_{\balpha} :=\hat{u}_\alpha$. The spectral gap property has been proved in \cite{Lang2016}.

\subsection{Traveling Waves in Reaction-Diffusion Systems}
Consider the one-dimensional reaction-diffusion system
\begin{equation}
du_t = \lbrace \tilde{A} u_t + \mathcal{W}'(u_t) \rbrace dt + B(u_t)dW_t,
\end{equation}
with $\tilde{A}$ the Laplacian $\frac{\partial^2}{\partial x^2}$, and $\mathcal{W}$ a potential function. In many circumstances such systems support traveling fronts. See the discussion in \cite[Section 4.2.2]{Kapitula2013}. One must take care to work in the co-moving frame to apply the formalism of this paper (as discussed in Section \ref{sec:traveling fronts}, see also Remark \ref{Important Remark}): then the traveling wave solutions constitute a manifold of fixed points (invariant under spatial translation). We thus write $A := \tilde{A} + c\frac{\partial}{\partial\zeta}$, and in the moving frame the dynamics is of the form
\begin{equation}
du_t = \lbrace A u_t + \mathcal{W}'(u) \rbrace dt + \tilde{B}(t,u_t)dW_t,
\end{equation}
 and $\tilde{B}(t , u_t)w := B( \tilde{w} )$, $\tilde{w}(x) = w(x+ct)$ for $w\in E$. See \cite{Cartwright2019,Hamster2020} for a discussion of the stochastic Nagumo equation.

\subsection{Traveling pulses in neural fields}
One can modify the classical neural field equation \eqref{NF} to produce traveling pulse solutions in the following way.  Indeed consider the system
\begin{equation}
\label{NFwA}
\begin{cases}
\partial_tu_t = -u_t + \int_{\R}w(\cdot - y)F(u_t(y))dy -v_t, \quad t\geq 0\\
\partial_tv_t =  \theta u_t - \beta v_t,
\end{cases}
\end{equation}
where as above $F:\R \to \R$ is a smooth and bounded sigmoid function, $w\in \mathcal{C}(\R)\cap L^1(\R)$ and $\theta>0, \beta\geq0$ are some constants with $\theta<<\beta$ . This is called the neural field equation with adaptation (see for example \cite[Section 3.3]{Bressloff2012} for a review).   This time we look for a solution to \eqref{NFwA} of the form $(u_t, v_t) = (\hat{u}(\cdot-ct), \hat{v}(\cdot-ct))$ for some $c\in\R$, such that $\hat{u}(x)$ and $\hat{v}(x)$ decay to zero as $x\to\pm\infty$.  Substituting this into \eqref{NFwA}, we are thus looking for a solution to the equation
\begin{equation}
\label{NFwAvector}
cU'(x) = \left(\begin{array}{cc} -1 &- 1\\ \theta  & -\beta\end{array}\right)U(x)+ f(U)(x), \quad x\in \R,
\end{equation}
where $U(x) = (\hat{u}(x), \hat{v}(x))$, and $f(U)(x) := (w*F(\hat{u})(x), 0)^T$, for all $x\in\R$.

It can be shown (see \cite[Section 3.1]{Pinto2001} or \cite{Faye2015})
that there exists (again under some conditions on the parameters) a smooth function $U := (\hat{u}, \hat{v})\in [L^2(\R)]^2$ and speed $c\in\R$ such that $U$ is a solution to \eqref{NFwAvector}.  Moreover $\hat{u}$ and $\hat{v}$ are both smooth functions whose derivatives are all bounded and in $L^2(\R)$.  Thus, again by translation invariance we have that $U_\alpha:= U(\cdot + \alpha)\in [L^2(\R)]^2$ is a solution to
\[
AU_\alpha + f(U_\alpha) = 0
\]
for all $\alpha\in\R$, where 
\[
AU: = cU' - \left(\begin{array}{cc} -1 &- 1\\ \theta  & -\beta\end{array}\right)U, \quad\forall U\in [L^2(\R)]^2.
\]
Once again we are thus in a specific situation of the general setup described in Section \ref{General Setting}, this time with $H=[L^2(\R)]^2$ and $\varphi_{\balpha} :=U_\alpha$.   Since $\hat{u}(x)\to0$ as $x\to\pm\infty$, we say that the solution is a traveling pulse. The stability of the traveling pulse has been proved in \cite{Ermentrout2014}. Other types of neural field models also support traveling pulses, such as \cite{Keane2015}. See \cite{Eichinger2020} for the development of a phase decomposition broadly similar to the methods outlined in this paper.

\subsection{Neural Field Patterns on Higher Dimensional Domains}
In the one-dimensional neural field model of the previous section, neurons are grouped according to their orientation selectivity, which takes on values between $-\pi / 2$ and $\pi / 2$. More sophisticated neural field models can involve patterns with more degrees of freedom \cite{Bressloff2012,Coombes2014,Carroll2016}. One example is the celebrated explanation of hallucinations using a neural field model that is invariant under three types of group action: rotation, reflection in the plane, and a shift-twist action \cite{Bressloff2002}. If one were to impose space-time noise on this model (doing this has excellent biophysical motivation, because brain signals are typically very noisy), then one could easily observe a rich range of metastable phenomena over long time periods (applying the results of Section 6). Another recent example is the analysis of the wandering of bumps of neural activity over the sphere in \cite{Visser2017,Bressloff2019b}: this has two degrees of freedom. The ergodic results of Section 6 can be applied to the wandering of a bump of activity over the sphere. One must use spherical polar co-ordinates $(\theta,\phi)$, and make sure to identify the points $(-\theta,\phi)$ and $(\theta,\phi)$, and one must identify $(\theta + 2k\pi , \phi + 2l\pi)$ with $(\theta,\phi)$.

\subsection{The Scalar Viscous Conservation Law}
This is the system
\begin{equation}
du_t = \lbrace A u_t + \partial_x f(u_t) \rbrace dt + B(u)dW_t,
\end{equation}
with $A$ the Laplacian. See the discussion in \cite[Section 4.4]{Kapitula2013}.

\subsection{The Parametrically-Forced Nonlinear Schrodinger Equation}

See \cite[Section 4.5]{Kapitula2013}.

\subsection{Spiral Waves in Reaction Diffusion Systems}

Spiral waves are pervasive in non-equilibrium reaction-diffusion systems \cite{Barkley1992,Barkley1994,Sandstede1997,Vanag2019}. Spatially-extended oscillations are also present in neural field equations \cite{Folias2005}. \cite{Nakao2014} have identified an equation for the leading order diffusive flux of the phase of oscillations in reaction-diffusion systems, a result that is consistent with this paper. Consider for examples the two-species model in \cite{Barkley1992}
\begin{align*}
\frac{\partial u}{\partial t} &= \nabla^2 u + \epsilon^{-1}u(1-u)\big\lbrace u - (v+b) / a \big\rbrace \\
\frac{\partial v}{\partial t} &= \delta \nabla^2 v + u-v.
\end{align*}
The domain is a circle of radius $R$.  \cite{Barkley1992} demonstrate that there exist stable spiral wave solutions to the above system. In a co-rotating reference frame, these solutions are fixed points. In this co-rotating frame, there is a manifold of solutions $\lbrace \varphi_{\theta} \rbrace_{\theta \in \mathbb{S}^1}$. One solution can be obtained from another by applying a rotation isometry. If one imposes space-time white noise on the above system, then the resulting system will fit the requirements of Section 2. 

Furthermore, the formalism of Section 6 could be used to determine the long-time average phase shift induced by noise correlations. These results parallel existing results for the long-time average phase shift of finite-dimensional stochastic oscillators \cite{Giacomin2018}.
%
%
%

\section{Definition of the Stochastic Phase}
In this section we outline two different phase definitions: the variational phase, and the isochronal phase. As explained in the introduction, one of the main goals of this paper is to determine elegant and useful stochastic differential equations for the phase;  indeed the variational phase and isochronal phase each have particular merits. The variational phase is obtained by insisting that the amplitude is orthogonal to the neutral eigenmodes of the adjoint operator. The isochronal phase is the limiting point on the manifold $\lbrace \varphi_{\balpha} \rbrace_{\balpha \in \mathbb{R}^m}$ that the system would  converge to in the absence of noise. The chief advantages of the variational phase are (i) its stochastic dynamics admits a more tractable analytic expression, (ii) it can be easily employed to obtain powerful exponential bounds on the probability of the system leaving a close neighborhood of the manifold (as performed in Section \ref{Section Long Time Stability}) and (iii) less regularity assumptions on $f$, $A$ and $B$ are required for the variational phase SDE than the isochronal phase SDE (note the additional assumptions at the start of Section \ref{Section Isochronal}). The chief advantages of the isochronal phase are (i) for finite-dimensional oscillators, the isochronal phase is the phase definition most preferred by experts, and so it is natural to search for its analog in our infinite-dimensional case, and (ii) it can be used to accurately predict the average occupation times of the system as it wanders close to the manifold over very long periods of time (as performed in Section \ref{Section Ergodicity}).
\subsection{Variational Phase SDE}
\label{sec:stoch wave}
The variational phase $\bbeta_t$ is defined to be such that (i) it is continuous for all $t < \tau$ ($\tau$ is a stopping time defined in \eqref{eq: stopping time}) and (ii) for all $t < \tau$, it exactly solves the identities, for $1\leq i \leq m$,
\begin{align}\label{eq: G i definition}
\mathcal{G}_i(u_t,\bbeta_t) &= 0 \text{ where }\mathcal{G}_i: E \times \mathbb{R}^m \mapsto \mathbb{R} \text{ is such that }\\
\label{zero first deriv}
\mathcal{G}_i(z,\balpha) &:= \langle z - \varphi_{\balpha},\psi^i_{\balpha}\rangle .
\end{align}
This phase definition agrees with our definition in \cite{MacLaurin2020a} for `stochastic neural bumps' in terms of a weighted Hilbert space: in this paper we determined an equation of the form \eqref{eq: G i definition} by defining the phase to minimize a potential weighted by the ratio of the eigenvectors (see also \cite{kruger2014}). The definition is different from our definition in \cite{inglis2016general} in two respects: (i) it is multi-dimensional, and (ii), in \cite{inglis2016general}, instead of the eigenvectors $\lbrace \psi^i_{\balpha} \rbrace$ of the adjoint operator $\mathcal{L}_{\balpha}^*$, we have $ \varphi_{\balpha,i}$ (the eigenvector of $\mathcal{L}_{\balpha}$). To leading order in $\norm{ u_t - \varphi_{\bbeta_t}}^2$, the phase definitions in \cite{inglis2016general} and \eqref{eq: G i definition} are equivalent, and either could be used to obtain accurate long-time stability estimates. Upto linear order in $\epsilon$, this definition agrees with the definitions in \cite{Cartwright2019,Hamster2020,Hamster2020b}.

Standard theory \cite[Lemma 4.3.3] {Kapitula2013} dictates that \eqref{eq: G i definition} has a unique solution $\bbeta_t$ as long as $u_t$ is close enough to the manifold $\lbrace \varphi_{\balpha} \rbrace_{\balpha\in \mathbb{R}^m}$. In Lemma \ref{Lemma Beta SDE}, we will prove that $\bbeta_t$ is uniquely well-defined for all times upto $\tau$, and we will outline a precise stochastic differential equation for $\bbeta_t$. However before we do this, we start with some informal calculations to motivate the definition of the stochastic phase. %
Notice first that our initial condition is such that \eqref{eq: G i definition} is satisfied exactly (for $t=0$). Now define $\mathcal{M}(z,\balpha)$ to be the $m\times m$ square matrix with elements
\begin{equation}
\mathcal{M}_{ij}(z,\balpha) = -\frac{\partial}{\partial \alpha_j}\mathcal{G}_i(z,\balpha).
\end{equation}
It follows from the implicit function theorem that \eqref{eq: G i definition} is solvable for the phase in some neighborhood of $(u_t,\bbeta_t)$ as long as the matrix $\mathcal{M}(u_t,\bbeta_t)$ is invertible. We therefore define the stopping time
\begin{equation}\label{eq: stopping time}
\tau = \inf\big\lbrace t\geq 0: \det\big( \mathcal{M}(u_t,\bbeta_t) \big) =0 \big\rbrace,
\end{equation}
and we assume that $t < \tau$, so that a local solution for $\beta_t$ in terms of $u_t$ is possible.

Since we are assuming that $\langle \varphi_{\bbeta} , \psi_{\bbeta}^i \rangle$ is invariant under $\bbeta$, we find that
\begin{align}
\mathcal{M}_{ij}(z,\balpha) &= -\langle z , \psi^i_{\balpha , j} \rangle \\
&=- \langle z - \varphi_{\balpha} , \psi^i_{\balpha,j} \rangle - \langle \varphi_{\balpha} , \psi^i_{\balpha,j} \rangle.
\end{align}
Our integration by parts assumption implies that $\langle \varphi_{\balpha} , \psi^i_{\balpha,j} \rangle = -  \langle \varphi_{\balpha,j} , \psi^i_{\balpha} \rangle = -\delta(i,j) $, by assumption. We can thus write
\begin{equation}\label{eq: M definition}
\mathcal{M}_{ij}(z,\balpha) = \delta(i,j) - \langle z - \varphi_{\balpha} , \psi^i_{\balpha,j} \rangle .
\end{equation}
In this above form, it is clear that as long as $\| u_t - \varphi_{\bbeta_t} \| $ is sufficiently small, $\mathcal{M}(u_t,\bbeta_t)$ is always invertible.

One can guess the dynamics of $\bbeta_t$ by first assuming that $\bbeta_t$ satisfies an SDE of the form
\begin{equation}\label{eq: beta SDE definition}
d\bbeta_t = \mathcal{V}(u_t, \bbeta_t) dt + \epsilon \mathcal{Y}(t,u_t,\bbeta_t)dW_t,
\end{equation}
for functions $\mathcal{V}: \mathbb{R}^+ \times E \times \mathbb{R}^m \to \mathbb{R}^m$ and $\mathcal{Y}: \mathbb{R}^+\times E \times \mathbb{R}^m \to \mathcal{L}(H, \mathbb{R}^m)$ to be determined below. As explained in \cite{inglis2016general,MacLaurin2020a}, one can then formally expand out the identity $d\mathcal{G}_i(u_t, \bbeta_t) = 0$ and (i) insist that the stochastic terms are zero to determine $\mathcal{Y}$, and then (ii) insist that the drift terms (i.e. the terms of finite variation) are zero, and thus determine $\mathcal{V}$. To this end, using Ito's Lemma, 
\begin{equation}
d\mathcal{G}_{i,t} = \langle du_t , \psi^i_{\bbeta_t} \rangle+ \sum_{j=1}^m \frac{\partial \mathcal{G}_i}{\partial \beta_t^j}d\beta^j_t + \frac{1}{2}\sum_{j,k=1}^m \frac{\partial^2 \mathcal{G}_i}{\partial \beta_t^j\partial \beta_t^k}d\beta^j_t d\beta^k_t + \sum_{j=1}^m\langle du_t , \psi^i_{\bbeta_t,j} \rangle d\beta^j_t , \label{eq: mathcal G i t}
\end{equation}
where the respective covariations of the processes are written as $d\beta^j_t d\beta^k_t$ and $du_t d\beta^j_t$. Now if $u_t$ were in the domain of $A$ and $Au_t \in H$, then using the fact that $A\varphi_{\bbeta_t} + f(\varphi_{\bbeta_t}) = 0$, it would hold that
\begin{align}
\langle du_t , \psi^i_{\bbeta_t} \rangle &= \langle \lbrace Au_t + f(u_t) \rbrace dt + \epsilon B(t,u_t)dW_t , \psi^i_{\bbeta_t} \rangle \\
 &= \langle \lbrace Au_t - A\varphi_{\bbeta_t} + f(u_t) - f(\varphi_{\bbeta_t}) \rbrace dt + \epsilon B(t,u_t)dW_t , \psi^i_{\bbeta_t} \rangle \\
 &= \langle u_t - \varphi_{\bbeta_t} , A^* \psi^i_{\bbeta_t} \rangle dt+ \langle f(u_t) - f(\varphi_{\bbeta_t}) , \psi^i_{\bbeta_t} \rangle dt + \epsilon \langle B(t,u_t) dW_t , \psi^i_{\bbeta_t} \rangle. \label{eq: dut psi t}
\end{align}
In deriving the last expression, we assumed that $u_t$ is in the domain of $A$. If $u_t$ is not in the domain of $A$, then \eqref{eq: dut psi t} is still well-defined (our assumptions dictate that $f(u_t) - f(\varphi_{\bbeta_t}) \in H$ and $A^* \psi^i_{\bbeta_t} \in H$), and we will see in the next section that \eqref{eq: dut psi t} is in fact the correct expression to use.

Matching the stochastic terms (the coefficients of $dW_t$) in \eqref{eq: mathcal G i t}, we find that
\begin{equation}
 \epsilon \langle B(t,u_t) dW_t , \psi^i_{\bbeta_t} \rangle - \epsilon \sum_{j=1}^m \mathcal{M}_{ij}(u_t,\bbeta_t)\mathcal{Y}_j(t,u_t,\bbeta_t)dW_t = 0.
\end{equation}
Inverting this equation, we find that the linear operator $\mathcal{Y}_j(t,u_t,\bbeta_t)$ must be such that for each $z\in H$,
\begin{align}\label{eq: Y definition}
\mathcal{Y}_i(t,u_t,\bbeta_t) \cdot z &=\sum_{j=1}^m \mathcal{N}_{ij}(u_t,\bbeta_t) \langle B(t,u_t) z,\psi^j_{\bbeta_t} \rangle \text{ where } \\
\mathcal{N}(u_t,\bbeta_t) &= \mathcal{M}(u_t,\bbeta_t)^{-1} \text{ and }\mathcal{N}(u_t,\bbeta_t) = \big( \mathcal{N}_{ij}(u_t,\bbeta_t) \big)_{1\leq i,j \leq m},
\end{align}
noting that $\mathcal{M}(u_t,\bbeta_t)^{-1}$ is the matrix inverse of $\mathcal{M}(u_t,\bbeta_t)$. It is immediate from the definition of the stopping time that $\mathcal{M}(u_t,\bbeta_t)$ is invertible for $t < \tau$. We thus find that the covariation terms must have the form (using standard theory for stochastic integrals with respect to infinite-dimensional Wiener Processes \cite[Chapter 4.3]{DaPrato2014}),
\begin{align}
d\beta^j_t d\beta^k_t &= \epsilon^2 \sum_{p,q=1}^m \mathcal{N}_{jp}(u_t,\bbeta_t)\mathcal{N}_{kq}(u_t,\bbeta_t) \langle B^*(t,u_t) \psi_{\bbeta_t}^p , B^*(t,u_t) \psi^q_{\bbeta_t} \rangle dt \label{eq: d beta j d beta k}  \\
\langle du_t , \psi^i_{\bbeta_t,j} \rangle d\beta^j_t  &= \epsilon^2 \sum_{p=1}^m \mathcal{N}_{jp}(u_t,\bbeta_t) \langle B^*(t,u_t)\psi^p_{\bbeta_t} , B^*(t,u_t) \psi^i_{\bbeta_t,j} \rangle dt. \label{eq: d u j d beta k} 
\end{align}

The above terms do not directly depend on $\mathcal{V}(u_t,\bbeta_t)$, which means that we can easily solve \eqref{eq: mathcal G i t} for $\mathcal{V}(u_t,\bbeta_t)$ by matching all of the coefficients of $dt$ terms. Observe that
\begin{align}\label{eq: G second derivative}
\frac{\partial^2 \mathcal{G}_i}{\partial \alpha^j \partial \alpha^k} = \langle z , \psi^i_{\balpha,jk}\rangle =\langle z - \varphi_{\balpha} , \psi^i_{\balpha,jk} \rangle + \langle \varphi_{\balpha} , \psi^i_{\balpha,jk} \rangle = \langle z - \varphi_{\balpha} , \psi^i_{\balpha,jk} \rangle - \langle \varphi_{\balpha,j} , \psi^i_{\balpha,k} \rangle ,  
\end{align}
using the integration by parts formula. We find that
\begin{multline}
-\sum_{j=1}^m \mathcal{M}_{ij}(u_t,\bbeta_t) \mathcal{V}_j(u_t,\bbeta_t)  +  \epsilon^2 \sum_{j,p=1}^m \mathcal{N}_{jp}(u_t,\bbeta_t) \langle B^*(t,u_t)\psi^p_{\bbeta_t} , B^*(t,u_t) \psi^i_{\bbeta_t,j} \rangle\\
+  \frac{\epsilon^2}{2} \sum_{j,k,p,q=1}^m \langle u_t , \psi^i_{\bbeta_t,jk} \rangle \mathcal{N}_{jp}(u_t,\bbeta_t)\mathcal{N}_{kq}(u_t,\bbeta_t) \langle B^*(t,u_t) \psi_{\bbeta_t}^p , B^*(t,u_t) \psi^q_{\bbeta_t} \rangle \\
+ \langle u_t - \varphi_{\bbeta_t} , A^* \psi^i_{\bbeta_t} \rangle + \langle f(u_t) - f(\varphi_{\bbeta_t}) , \psi^i_{\bbeta_t} \rangle = 0.
\end{multline}
Inverting the matrix $\mathcal{M}(u_t,\bbeta_t)$, we thus find that for $1\leq r \leq m$,
\begin{multline}\label{eq: V ut beta t 0}
\mathcal{V}_r(t,u_t,\bbeta_t) = \sum_{i=1}^m \mathcal{N}_{ri}(u_t,\bbeta_t) \bigg\lbrace  \epsilon^2 \sum_{j,p=1}^m \mathcal{N}_{jp}(u_t,\bbeta_t) \langle B^*(t,u_t)\psi^p_{\bbeta_t} , B^*(t,u_t) \psi^i_{\bbeta_t,j} \rangle\\
+  \frac{\epsilon^2}{2} \sum_{j,k,p,q=1}^m \langle u_t , \psi^i_{\bbeta_t,jk} \rangle \mathcal{N}_{jp}(u_t,\bbeta_t)\mathcal{N}_{kq}(u_t,\bbeta_t) \langle B^*(t,u_t) \psi_{\bbeta_t}^p , B^*(t,u_t) \psi^q_{\bbeta_t} \rangle \\
 \langle u_t - \varphi_{\bbeta_t} , A^* \psi^i_{\bbeta_t} \rangle + \langle f(u_t) - f(\varphi_{\bbeta_t}) , \psi^i_{\bbeta_t} \rangle\bigg\rbrace .
\end{multline}
Now
\begin{align*}
\big\langle u_t - \varphi_{\bbeta_t} , A^* \psi^i_{\bbeta_t} \big\rangle + \big\langle Df(\varphi_{\bbeta_t})\cdot (u_t - \varphi_{\bbeta_t}) , \psi^i_{\bbeta_t} \big\rangle = \big\langle u_t - \varphi_{\bbeta_t} , \mathcal{L}^*_{\bbeta_t} \psi^i_{\bbeta_t} \big\rangle = 0,
\end{align*}
since by definition $\psi^i_{\bbeta_t}$ is an eigenvector of $\mathcal{L}^*_{\bbeta_t}$. We thus find that 
\begin{multline}\label{eq: V ut beta t}
\mathcal{V}_r(t,u_t,\bbeta_t) = \sum_{i=1}^m \mathcal{N}_{ri}(u_t,\bbeta_t) \bigg\lbrace  \epsilon^2 \sum_{j,p=1}^m \mathcal{N}_{jp}(u_t,\bbeta_t) \langle B^*(t,u_t)\psi^p_{\bbeta_t} , B^*(t,u_t) \psi^i_{\bbeta_t,j} \rangle\\
+  \frac{\epsilon^2}{2} \sum_{j,k,p,q=1}^m \langle u_t , \psi^i_{\bbeta_t,jk} \rangle \mathcal{N}_{jp}(u_t,\bbeta_t)\mathcal{N}_{kq}(u_t,\bbeta_t) \langle B^*(t,u_t) \psi_{\bbeta_t}^p , B^*(t,u_t) \psi^q_{\bbeta_t} \rangle \\
+ \langle f(u_t) - f(\varphi_{\bbeta_t}) -Df(\varphi_{\bbeta_t})\cdot (u_t - \varphi_{\bbeta_t}), \psi^i_{\bbeta_t} \rangle\bigg\rbrace .
\end{multline}
\subsubsection{Rigorous Definition of the Variational Phase SDE}

In the previous section, we guessed the form that the phase SDE should take by matching coefficients in the expression \eqref{eq: G i definition}. We now rigorously prove that this informal derivation (i) defines a unique stochastic process $\bbeta_t$, and (ii) \eqref{eq: G i definition} is satisfied. We recall the definitions of the functions  $\mathcal{V}:\mathbb{R}^+\times E \times \mathbb{R}^m \to \mathbb{R}^m$, $\mathcal{V} = (\mathcal{V}_i)_{1\leq i \leq m}$ and $\mathcal{Y}:\mathbb{R}^+ \times  E \times \mathbb{R}^m \to \mathcal{L}(H, \mathbb{R}^m)$ in \eqref{eq: Y definition} and \eqref{eq: V ut beta t}.

Now define $\bbeta_t$ to satisfy the $\mathbb{R}^m$-valued SDE
\begin{equation}\label{eq: beta SDE definition 2}
d\bbeta_t = \mathcal{V}(t,u_t, \bbeta_t) dt + \epsilon \mathcal{Y}(t,u_t,\bbeta_t)dW_t,
\end{equation}
with initial condition $\bbeta_0 = \bar{\bbeta}$, for all times $t$ upto the stopping time $\tau$. We are going to see that this definition is consistent with our previous definition of $\bbeta_t$ in \eqref{eq: G i definition}. Notice that the SDE for $\bbeta_t$ depends on the noise $W_t$ and solution $u_t$ of the original system. It is therefore essential to our argument that there exists a \textit{strong} solution to the SDE (see \cite[Chapter 5]{Karatzas1991} for a definition of a strong solution). In other words we need more than just an identification of the probability law of $\bbeta_t$; we also require that it is coupled in the same space as $u_t$ and $W_t$.

\begin{lemma}\label{Lemma Beta SDE}
There exists a unique strong solution $\bbeta_t$ to the SDE in \eqref{eq: beta SDE definition 2} for all times $t < \tau$. Furthermore this solution is such that, for all $t< \tau$,
\begin{align}\label{eq: G i definition 2}
\mathcal{G}_i(u_t,\bbeta_t) &= 0 \text{ where } \\
\label{zero first deriv 2}
\mathcal{G}_i(z,\balpha) &= \langle z - \varphi_{\balpha},\psi^i_{\balpha}\rangle .
\end{align}
\end{lemma}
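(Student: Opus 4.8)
The plan is to turn the informal matching-of-coefficients computation into a rigorous argument in two stages: first establish existence and uniqueness of a strong solution to \eqref{eq: beta SDE definition 2} up to the stopping time $\tau$, and then verify that this solution satisfies the algebraic identities \eqref{eq: G i definition 2}. For the first stage I would note that, on the random time interval $[0,\tau)$, the matrix $\mathcal{M}(u_t,\bbeta_t)$ is invertible by the very definition of $\tau$ in \eqref{eq: stopping time}, and in the form \eqref{eq: M definition} one sees that its entries — and hence (by Cramer's rule) the entries of $\mathcal{N} = \mathcal{M}^{-1}$ — are smooth functions of $\bbeta_t$ and locally Lipschitz in $u_t$, using Assumption \ref{assump varphi} and the assumed Lipschitz regularity of $\balpha\mapsto\psi^i_{\balpha}$, $\psi^i_{\balpha,j}$, $\psi^i_{\balpha,jk}$ and $A^*\psi^i_{\balpha}$. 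Combined with the boundedness and Lipschitz property of $B$ and of $Df$, the coefficients $\mathcal{V}$ and $\mathcal{Y}$ in \eqref{eq: V ut beta t} and \eqref{eq: Y definition} are (locally) Lipschitz in $(u_t,\bbeta_t)$; since $u_t$ is a given adapted $E$-valued process from Proposition \ref{K-S}, the SDE \eqref{eq: beta SDE definition 2} is a standard $\mathbb{R}^m$-valued SDE with random, adapted, locally-Lipschitz coefficients, so a unique strong solution exists up to the explosion time, and one checks the explosion time is at least $\tau$ by a localization argument (stopping when $\det\mathcal{M}$ gets close to $0$ or $\|u_t-\varphi_{\bbeta_t}\|$ gets large, and using $\tau$'s definition to pass to the limit).

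For the second stage, set $\mathcal{G}_{i,t} := \mathcal{G}_i(u_t,\bbeta_t)$ and apply Itô's formula to the process $t\mapsto \mathcal{G}_i(u_t,\bbeta_t)$, using the mild-solution dynamics of $u_t$ (so that $\langle du_t,\psi^i_{\bbeta_t}\rangle$ is interpreted via \eqref{eq: dut psi t}, which the excerpt already flags as the correct expression even when $u_t\notin\mathcal{D}(A)$ — this needs the standard justification that for $\psi\in\mathcal{D}(A^*)$ one has $\langle P^A_{t-s}h,\psi\rangle$ differentiable with the expected derivative, i.e. a weak/variational formulation) together with the SDE \eqref{eq: beta SDE definition 2} for $\bbeta_t$ and the covariation identities \eqref{eq: d beta j d beta k}–\eqref{eq: d u j d beta k}. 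By the construction of $\mathcal{Y}$ the $dW_t$ terms cancel exactly, and by the construction of $\mathcal{V}$ (including the simplification via $\mathcal{L}^*_{\bbeta_t}\psi^i_{\bbeta_t}=0$) all the $dt$ terms cancel as well; hence $d\mathcal{G}_{i,t} = 0$ for $t<\tau$. Since $\mathcal{G}_i(u_0,\bar{\bbeta}) = \langle\varphi_{\bar{\bbeta}}-\varphi_{\bar{\bbeta}},\psi^i_{\bar{\bbeta}}\rangle = 0$, it follows that $\mathcal{G}_i(u_t,\bbeta_t)\equiv 0$ on $[0,\tau)$, which is \eqref{eq: G i definition 2}. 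Uniqueness of the process satisfying both the continuity requirement and \eqref{eq: G i definition 2} then follows from the implicit function theorem / \cite[Lemma 4.3.3]{Kapitula2013}: given $u_t$, the solution $\balpha$ of $\mathcal{G}_i(u_t,\balpha)=0$ near a point where $\mathcal{M}$ is invertible is locally unique, and a connectedness argument in $t$ upgrades this to uniqueness on all of $[0,\tau)$.

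I expect the main obstacle to be the careful handling of the term $\langle du_t,\psi^i_{\bbeta_t}\rangle$ and the application of Itô's formula in the infinite-dimensional mild setting: one cannot literally write $du_t = [Au_t+f(u_t)]dt + \epsilon B\,dW_t$ because $Au_t$ need not lie in $H$, so the cleanest route is to work with the variational (weak) formulation against the test functions $\psi^i_{\bbeta_t}$, $\psi^i_{\bbeta_t,j}$, $\psi^i_{\bbeta_t,jk}$ — all of which lie in $H$ and in the domain of $A^*$ by the standing assumptions — and invoke an infinite-dimensional Itô formula for the real-valued semimartingale $\langle u_t,\psi\rangle$ with $\psi$ itself a semimartingale (through its $\bbeta_t$-dependence); this is where the extra regularity assumptions on $\psi^i_{\balpha}$ and $A^*\psi^i_{\balpha}$ are consumed. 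A secondary technical point is justifying that the quadratic and cross variations really take the stated closed forms \eqref{eq: d beta j d beta k}–\eqref{eq: d u j d beta k}, which requires that the Hilbert-Schmidt/trace-class hypotheses on $B$ (and on $U_{\balpha}(t-s)B$) make all the relevant stochastic integrals well-defined and the formal computation legitimate; this follows from \cite[Chapter 4.3]{DaPrato2014} once one observes $B^*(t,u_t)\psi^i_{\bbeta_t}\in H$ by boundedness of $B$.
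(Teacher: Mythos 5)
Your sketch is essentially the paper's proof, and you correctly identify both the obstacle and the remedy. For existence and uniqueness of the strong solution, both you and the paper note that $\mathcal{V}$ and $\mathcal{Y}$ are locally Lipschitz in $\bbeta_t$ (and refer back to \cite{inglis2016general}). For the identity $\mathcal{G}_i(u_t,\bbeta_t)\equiv 0$, you correctly flag that $u_t$ is only a mild solution, so one cannot literally write $du_t = [Au_t + f(u_t)]\,dt + \cdots$ with $Au_t\in H$, and that the fix is to work weakly against $\psi^i_{\bbeta_t}$ and its $\balpha$-derivatives so that the unbounded operator $A$ only enters through $A^*\psi^i_{\balpha}$ (which lies in $H$ by assumption). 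The one caveat: the ``infinite-dimensional It\^o formula for $\langle u_t,\psi\rangle$ with $\psi$ a $\bbeta_t$-dependent semimartingale'' you propose to \emph{invoke} is not a citable black box, and the paper does not invoke it; it constructs it from scratch. Concretely, the paper switches to $\tilde{\mathcal{G}}_i(v_t,\bbeta_t)$ with $v_t := u_t-\varphi_{\bbeta_t}$ so the argument lives in the Hilbert space $H$ rather than the affine space $E$, writes the mild representation of $v_t$ via the inhomogeneous semigroup $\tilde U(s,t)$, localizes with the stopping times $\xi_n$, Taylor-expands $\tilde{\mathcal{G}}_i$ over a partition with intermediate points $(w_k,\zeta_k)$, and then passes $\tilde U(s,t)$ onto its adjoint acting on $\psi^i_{\bbeta_s}$ so that the term $\langle\mathcal{L}^*_{\bbeta_s}\psi^i_{\bbeta_s},v_s\rangle$ emerges as the mesh goes to zero. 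That partition-limit construction is where essentially all of the proof's technical work is spent; your proposal correctly locates it but treats it as a step to be invoked rather than proved.
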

\begin{proof}
The existence and uniqueness of the strong solution $\bbeta_t$ follows straightforwardly from the fact that the coefficient functions $\mathcal{V}$ and $\mathcal{Y}$ are locally Lipschitz in $\bbeta_t$ (see the proof in the one-dimensional case in \cite{inglis2016general}). Also, the operator $\mathcal{Y}$ is evidently Hilbert-Schmidt.

To prove \eqref{eq: G i definition 2}, one might wish to try to find an infinite-dimensional Ito's Lemma \cite{DaPrato2014} (this is the change-of-variable formula for stochastic differential equations, analogous to the chain rule of differential calculus). However the possible unboundedness of the operator $A$ complicates any easy generalization of Ito's Lemma to infinite dimensions. We thus instead take care to rigorously prove this; adapting the standard proof of Ito's Lemma (see \cite{DaPrato2014} and \cite[Theorem 4.17]{Karatzas1991}) to our setting. The trick to handling the unbounded operator $A$ is to instead work with its adjoint acting on $\psi^i_{\bbeta_t}$: the smoothness of the manifold $(\psi^i_{\balpha})_{\balpha\in \mathbb{R}^m}$ ensures that this is well-behaved.

Write $v_t = u_t - \varphi_{\bbeta_t}$. It follows from Ito's Lemma that, substituting the identity $A\varphi_{\bbeta_t} + f(\varphi_{\bbeta_t}) = 0$,
\begin{align}
dv_t =& \big(Au_t + f(u_t)\big) dt +\epsilon B(t,u_t)dW_t -\sum_{i=1}^m \varphi_{\bbeta_t , i}d\bbeta^i_t - \frac{1}{2}\sum_{j,k=1}^m \varphi_{\bbeta_t,jk}d\beta^j_t d\beta^k_t \\
=& \big( \mathcal{L}_{\bbeta_t} v_t + f(u_t) - f(\varphi_{\bbeta_t}) -Df(\varphi_{\bbeta_t})\cdot v_t \big) dt +\epsilon B(t,u_t)dW_t -\sum_{i=1}^m \varphi_{\bbeta_t , i}d\bbeta^i_t\nonumber \\  &- \frac{1}{2}\sum_{j,k=1}^m \varphi_{\bbeta_t,jk}d\beta^j_t d\beta^k_t  \\
=& ( \mathcal{L}_{\bbeta_t} v_t + \mathcal{K}_t ) dt +\epsilon \tilde{B}(t,u_t,\bbeta_t)dW_t \label{eq: mild solution for v t}
\end{align}
where
\begin{align}
\tilde{B}(s,z,\balpha) : \; & \mathbb{R}^+ \times H \times \mathbb{R}^m \to \mathcal{L}(H,H) \\
\tilde{B}(s,z,\balpha) =& B(s,z) - \sum_{j=1}^m \varphi_{\balpha , j} \mathcal{Y}_j(s,z,\balpha) \label{eq: tilde B definition} \\
\mathcal{K}_t :=& f(u_t) - f(\varphi_{\bbeta_t}) -Df(\varphi_{\bbeta_t})\cdot v_t -\sum_{i=1}^m \varphi_{\bbeta_t,i}\mathcal{V}_i(t,u_t, \bbeta_t)\nonumber \\ &- \frac{\epsilon^2}{2} \sum_{j,k,p,q=1}^m \varphi_{\bbeta_t,jk} \mathcal{N}_{jp}(u_t,\bbeta_t)\mathcal{N}_{kq}(u_t,\bbeta_t) \langle B^*(t,u_t) \psi_{\bbeta_t}^p , B^*(t,u_t) \psi^q_{\bbeta_t} \rangle ,
\end{align}
and we have substituted the expression for $d\beta^j_t d\beta^k_t$ in \eqref{eq: d beta j d beta k} . The solution for $v_t$, written in mild form, satisfies for $t\in [t_k , t_{k+1}]$,
\begin{equation}\label{eq: SDE v t}
v_t =\tilde{U}(t_k , t)v_{t_k} + \int_{t_k}^t \tilde{U}(s,t)\mathcal{K}_s ds  
+ \epsilon \int_{t_k}^t \tilde{U}(s,t) \tilde{B}(s,u_s,\bbeta_s)dW_s,
\end{equation}
where $\tilde{U}(s,t)$ is the inhomogeneous semigroup generated by $\mathcal{L}_{\bbeta_t}$. That is, for any $z\in \mathcal{D}(A)$, $\tilde{U}(s,t)\cdot z$ := $ x_t$, where
\[
\frac{dx_r}{dr} = \mathcal{L}_{\bbeta_r}\cdot x_r,
\]
and $x_s = z$. This definition can be continuously extended to all $z\in H$. Define
\begin{multline}
\xi_n := \inf \bigg\lbrace t\in[0, \tau]: \det(\mathcal{M}(u_{t},\bbeta_t)) = n^{-1}\text{ or } \norm{\int_0^t B(s,u_s)dW_s}\geq n\\ \text{ or }\sup_{1\leq i \leq m} \norm{\int_0^t \mathcal{Y}_i(s,u_s,\bbeta_s)dW_s}\geq n \text{ or }\sup_{1\leq i \leq m} \ |\beta^i_t | \geq n \bigg\rbrace.
\end{multline}
 It may be seen that $(\xi_n)_{n\geq1}$ is nondecreasing, and that $\lim_{n\to\infty}\xi_n = \tau$ a.s. Define for any $t\geq0$ $\bbeta^{n}_t = \bbeta_{t\wedge \xi_n}$ and $v^{n}_t = v_{t\wedge \xi_n}$ where as above $(v_t)_{t\geq0}$ is $u_t - \varphi_{\bbeta_t}$, and $u_t$ is the solution to the SDE in Proposition \ref{K-S}. 
Let $\Pi = (t_i)_{i=1}^M$ be a partition of $[0,t]$ for some $t\geq0$. For some family $\lbrace \theta_k\rbrace_{k=1}^{M-1} \subset [0,1]$ to be specified below, set $w_k = \theta_k v^{n}_{t_{k}} + (1-\theta_k) v^{n}_{t_{k+1}}$ and $\zeta_k = \theta_k \bbeta^{n}_{t_{k}} + (1-\theta_k) \bbeta^{n}_{t_{k+1}}$. 
Let $X_k = (v^{n}_{t_{k+1}}-v^{n}_{t_k},\bbeta^{n}_{t_{k+1}}-\bbeta^{n}_{t_k})$. 

We now write $\tilde{\mathcal{G}}_i(v_t,\bbeta_t) := \mathcal{G}_i(v_t + \varphi_{\bbeta_t},\bbeta_t)$: in this way $\tilde{\mathcal{G}}_i$ is a function on a Hilbert space, rather than the Banach space $E$, and this simplifies the calculations. From the expressions in \eqref{eq: M definition} and \eqref{eq: G second derivative}, it is clear that
\begin{align*}
\frac{\partial\tilde{\mathcal{G}}_i}{\partial \alpha^j}(z,\balpha) = &\delta(i,j) - \mathcal{M}_{ij}(z,\balpha) \\
\frac{\partial^2\tilde{\mathcal{G}}_i}{\partial \alpha^j \partial \alpha^k}(z,\balpha) = & \langle z , \psi^i_{\balpha,jk}\rangle .
\end{align*}
Thus by Taylor's theorem,
\begin{multline}
\label{taylor}
\tilde{\mathcal{G}}_i(v^{n}_t,\bbeta^{n}_t) -\tilde{\mathcal{G}}_i(v^{\bbeta}_0,\bbeta_0) = \sum_{k=1}^{M-1}\bigg\lbrace \sum_{j=1}^m \big( \delta(i,j)-\mathcal{M}_{ij}(u^n_{t^k}, \bbeta_{t^k}) \big) (\bbeta^{n,j}_{t_{k+1}} - \bbeta^{n,j}_{t_k})+ \langle v^{n}_{t_{k+1}} -v^{n}_{t_k},\psi^i_{\bbeta^{n}_{t_{k}}}\rangle  \\
 +\sum_{p=1}^m\bigg(  \frac{1}{2}\sum_{q=1}^m\frac{\partial^2 \tilde{\mathcal{G}}_i}{\partial \beta_t^p\partial \beta_t^q}\bigg|_{\zeta_k,w_k} (\beta^{n,p}_{t_{k+1}} - \beta^{n,p}_{t_k})  (\beta^{n,q}_{t_{k+1}} - \beta^{n,q}_{t_k}) + \langle v^{n}_{t_{k+1}}-v^{n}_{t_k} , \psi^i_{\zeta_k,p} \rangle (\beta^{n,p}_{t_{k+1}} - \beta^{n,p}_{t_k})  \bigg)\bigg\rbrace ,
\end{multline}
for some $\lbrace \theta_k\rbrace_{k=1}^{M-1} \subset [0,1]$. Now for any $r \geq 0$,  it must be that that
\begin{align*}
\lim_{s\to 0} \sup_{r\leq z \leq t \leq r+s}(t-z)^{-1}\langle \tilde{U}(z, t)v_{z} , \psi^i_{\bbeta^{n}_{z}}\rangle =&\lim_{s\to 0} \sup_{r\leq z \leq t \leq r+s} (t-z)^{-1}\langle v_{z} , \tilde{U}(z ,t)^* \psi^i_{\bbeta^{n}_{z}}\rangle \\ 
\to & \langle \mathcal{L}_{\bbeta_r}^*\psi^i_{\bbeta_{r}},v_r\rangle ,
\end{align*}
by the dominated convergence theorem, and using our assumption that $\psi^i_{\balpha}$ is in the domain of $ \mathcal{L}_{\balpha}^*$ for any $\balpha \in \mathbb{R}^m$. We thus find that, using \eqref{eq: mild solution for v t}, and recalling that by definition, $\mathcal{L}^*_{\bbeta_s}\psi^i_{\bbeta_s} = 0$,
\begin{align*}
\sum_{k=1}^{M-1} \langle v^{n}_{t_{k+1}}& - v^{n}_{t_k},\psi^i_{\bbeta^{n}_{t_{k}}}\rangle \\
&\to \int_{0}^{\xi_n\wedge t}[\langle \mathcal{L}_{\bbeta_s}^*\psi^i_{\bbeta_{s}},v_s\rangle + \langle \mathcal{K}_s,\psi^i_{\bbeta_{s}}\rangle ]ds +\epsilon \int_{0}^{\xi_n\wedge t}\langle\psi^i_{\bbeta_{s}},\tilde{B}(s)dW_s\rangle \\
&= \int_{0}^{\xi_n\wedge t}\langle \mathcal{K}_s,\psi^i_{\bbeta_{s}}\rangle ds +\epsilon \int_{0}^{\xi_n\wedge t}\langle\psi^i_{\bbeta_{s}},B(s,u_s)dW_s\rangle - \epsilon \big( \beta^{n,i}_{t\wedge \xi^n} - \beta^{n,i}_0 \big)
\end{align*}
since $\langle \psi^i_{\bbeta_s} , \phi_{\bbeta_s,j} \rangle = \delta(i,j)$. Clearly
\[
 \sum_{k=1}^{M-1} \sum_{j=1}^m \delta(i,j)(\beta^{n,j}_{t_{k+1}} - \beta^{n,j}_{t_k}) \to \beta^{n,i}_{t \wedge \xi^n} - \beta^{n,i}_0.
\]
Similarly, making use of \eqref{eq: beta SDE definition 2}, as $\Pi \to 0$,
\begin{equation*}
\sum_{k=1}^{M-1} \sum_{j=1}^m \mathcal{M}_{ij}(u^n_{t^k}, \bbeta_{t^k}) (\beta^{n,j}_{t_{k+1}} - \beta^{n,j}_{t_k}) \to \epsilon \int_{0}^{t\wedge\xi_n}\langle \psi^i_{\bbeta_s} , B(s,u_s)dW_s\rangle +\int_{0}^{t\wedge\xi_n} \mathcal{Z}^i_s ds,
\end{equation*}
where
\begin{multline}
\mathcal{Z}^i_t =  \epsilon^2 \sum_{j,p=1}^m \mathcal{N}_{jp}(u_t,\bbeta_t) \langle B^*(t,u_t)\psi^p_{\bbeta_t} , B^*(t,u_t) \psi^i_{\bbeta_t,j} \rangle\\
+  \frac{\epsilon^2}{2} \sum_{j,k,p,q=1}^m \langle u_t , \psi^i_{\bbeta_t,jk} \rangle \mathcal{N}_{jp}(u_t,\bbeta_t)\mathcal{N}_{kq}(u_t,\bbeta_t) \langle B^*(t,u_t) \psi_{\bbeta_t}^p , B^*(t,u_t) \psi^q_{\bbeta_t} \rangle \\
+ \langle f(u_t) - f(\varphi_{\bbeta_t}) -Df(\varphi_{\bbeta_t}) \cdot (u_t - \varphi_{\bbeta_t}), \psi^i_{\bbeta_t} \rangle .
\end{multline}
It remains for us to deal with the second order terms in the Taylor expansion \eqref{taylor}. As $\Pi \to 0$, we find that
\begin{multline}
 \frac{1}{2} \sum_{k=1}^{M-1}\sum_{p,q=1}^m\frac{\partial^2 \tilde{\mathcal{G}}_i}{\partial \beta_t^p\partial \beta_t^q}\bigg|_{\zeta_k,w_k} (\beta^{n,p}_{t_{k+1}} - \beta^{n,p}_{t_k})  (\beta^{n,q}_{t_{k+1}} - \beta^{n,q}_{t_k})\\
 \mapsto  \frac{\epsilon^2}{2} \sum_{j,k,p,q=1}^m \int_0^{t\wedge \xi_n} \langle v_s , \psi^i_{\bbeta_s,jk} \rangle   \mathcal{N}_{jp}(u_s,\bbeta_s)\mathcal{N}_{kq}(u_s,\bbeta_s) \langle B^*(s,u_s) \psi_{\bbeta_s}^p , B^*(s,u_s) \psi^q_{\bbeta_s} \rangle ds ,
\end{multline}
using the expression for the covariation in \eqref{eq: d beta j d beta k} . Note that in the above,
\begin{equation}
 \langle v_s , \psi^i_{\bbeta_s,jk} \rangle = \langle u_s,\psi^i_{\bbeta_s,jk} \rangle +  \langle \varphi_{\balpha,j} , \psi^i_{\balpha,k} \rangle,
\end{equation}
since by the integration by parts formula, $\langle \varphi_{\balpha,j} , \psi^i_{\balpha,k} \rangle = -\langle \varphi_{\balpha} , \psi^i_{\balpha,jk} \rangle$.

Finally
\begin{multline*}
\lim_{M\to\infty}\sum_{k=1}^{M-1}\sum_{p=1}^m\langle v^{n}_{t_{k+1}}-v^{n}_{t_k} , \psi^i_{\zeta_k,p} \rangle (\beta^{n,p}_{t_{k+1}} - \beta^{n,p}_{t_k}) \\
=\lim_{M\to\infty}\sum_{k=1}^{M-1} \sum_{p=1}^m\big\lbrace\langle u^{n}_{t_{k+1}}-u^{n}_{t_k}  , \psi^i_{\zeta_k,p} \rangle (\beta^{n,p}_{t_{k+1}} - \beta^{n,p}_{t_k}) \\-\sum_{j=1}^m\langle \varphi_{\bbeta^{n}_{t_k},j}(\beta^{n,j}_{t_{k+1}} - \beta^{n,j}_{t_k})  , \psi^i_{\zeta_k,p} \rangle (\beta^{n,p}_{t_{k+1}} - \beta^{n,p}_{t_k}) \big\rbrace\\
\mapsto  \epsilon^2 \int_0^{t\wedge \xi_n} \big\lbrace  \sum_{j,p=1}^m\mathcal{N}_{jp}(u_s,\bbeta_s) \langle B^*(s,u_s)\psi^p_{\bbeta_s} , B^*(s,u_s) \psi^i_{\bbeta_s,j} \rangle\\ - \sum_{j,p,q,r=1}^m\langle \varphi_{\bbeta_s,j},\psi^i_{\bbeta_s,r} \rangle   \mathcal{N}_{jp}(u_s,\bbeta_s)\mathcal{N}_{rq}(u_s,\bbeta_s) \langle B^*(s,u_s) \psi_{\bbeta_s}^p , B^*(s,u_s) \psi^q_{\bbeta_s} \rangle \big\rbrace ds.
\end{multline*}
Adding up the above identities, we find that
\[
\tilde{\mathcal{G}}_i(v_t , \bbeta_t) = 0,
\]
for all $t\leq \xi_n$. Since this holds for any $n$ and $\xi_n\uparrow \tau$ we have the result.
\end{proof}

\subsection{Isochronal Phase}\label{Section Isochronal}
In Section \ref{sec:stoch wave}, we defined the variational phase to satisfy the orthogonality relationship in \eqref{eq: G i definition}. We now outline a slightly different phase $\bgamma_t$ - the isochronal phase - which will be necessary to obtain the ergodicity result of section 6. The reason why a different phase definition is needed is that the drift of $d\bbeta_t$ is $O(\norm{v_t}^2 + \epsilon^2)$, and since for all $t\leq \tau$, $\norm{v_t}$ is typically $O(\epsilon)$, the $O(\norm{v_t}^2) $ terms make a non-trivial contribution to the drift dynamics. The benefit of using $\bgamma_t$ is that the leading order of the drift of $d\bgamma_t$ does not depend on the amplitude $v_t$. $\bgamma_t$ is an analog of the isochronal phase used in the phase reduction of finite-dimensional oscillators  \cite{Ermentrout2010,Bressloff2018}. 

We require the following additional assumptions throughout this section.
\begin{assump}\label{eq: uniform continuity P A}
For all $T \geq 0$,
\[
\lim_{h\to 0}\sup_{t\in [0,T]}h^{-1/2}\norm{ P^A(h)u_t - u_t} = 0,
\]
$\mathbb{P}$-almost-surely.
\end{assump}
In future work, in the case that $A$ is elliptic, the following assumption could likely be considerably relaxed.
\begin{assump}\label{Assumption Theta}
We assume that for some choice of orthonormal basis $\lbrace e_j \rbrace_{j\geq 1}$ for $H$,
\begin{equation}
\lim_{M\to\infty} \sup_{u \in E} \sum_{j=M}^\infty \norm{B(u)e_j}^2 = 0.
\end{equation}
\end{assump}
Write $\Phi: E \to \mathbb{R}^m$ to be the phase map of Section \ref{sec:stoch wave}, satisfying the implicit relationship
\begin{equation}
\mathcal{G}_i\big(u , \Phi(u)\big) = 0 \text{ for all }1\leq i \leq m.
\end{equation} 
Let $\mathcal{R} \subset E$ be a subset of the attracting basin of the manifold $\lbrace \varphi_{\balpha} \rbrace_{\balpha\in \mathbb{R}^m}$, with
\begin{equation}
\mathcal{R} = \big\lbrace u \in E : \norm{u - \varphi_{\Phi(u)}} \leq \delta \big\rbrace,
\end{equation}
for some $\delta$ to be determined more precisely below. We assume that $\mathcal{R}$ is sufficiently close to the manifold $\lbrace \varphi_{\balpha} \rbrace_{\balpha \in \mathbb{R}^m}$ that $\Phi$ is uniquely well-defined on $\mathcal{R}$. $\delta$ will be chosen to be small enough that if $z \in \mathcal{R}$ then there exists a unique $\boldsymbol\eta_z \in \mathbb{R}^m$ such that
\begin{align}
\lim_{t\to\infty} u_t =& \varphi_{\boldsymbol\eta_z} \text{ where }\\
u_t =& P^A(t)\cdot z + \int_0^t P^A(t-s)\cdot f(u_s) ds \text{ and }u_0 = z.\label{eq: ut dynamics}
\end{align}
Define the corresponding map to be
\begin{equation}\label{eq: Theta Definition}
\Theta: \mathcal{R} \to \mathbb{R}^m \; \; : \; \; \Theta(z) := \boldsymbol\eta_z .
\end{equation}
We require an SDE expression for $\Theta(u_t)$. To do this, we must obtain a deeper understanding of the map $\Theta$: establishing that (i) it is always well-defined for $u\in \mathcal{R}$ (as long as $\delta$ is sufficiently small), and (ii) that it is twice continuously Frechet-differentiable, which will allow us to (iii) apply Ito's Lemma to obtain an SDE expression for $d\bgamma_t$. We also wish to show that $\Theta$ is very close to the variational phase of the previous section, which will help us obtain a more tractable expression for the occupation time estimates in Section 6.
 
\begin{lemma}\label{Lemma Theta Well Defined}
There exists $\delta > 0$ such that for all $u \in E$ such that $\norm{u- \varphi_{\Phi(u)}} \leq \delta$, $\Theta(u)$ is uniquely well-defined.
\end{lemma}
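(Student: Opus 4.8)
This is the deterministic ($\epsilon=0$) counterpart of the classical orbital-stability-with-asymptotic-phase theory (\cite[Chapter 4]{Kapitula2013}, \cite[Chapter 5]{Volpert1994}), and the plan is to run the noise-free flow \eqref{eq: ut dynamics} from $z$ and control its variational phase. First I would observe that, since $f$ is globally Lipschitz on $E$ (Assumption \ref{assump f}(iv)) and $A$ generates a $\mathcal{C}_0$-semigroup (Assumption \ref{assump A}), \eqref{eq: ut dynamics} has a unique global mild solution $(u_t)_{t\geq0}$, continuous into $E$, for every $z\in E$. Fix $\delta_0>0$ small enough that the phase map $\Phi$ is well-defined and $\mathcal{C}^1$ on $\mathcal{R}_0:=\{u\in E:\norm{u-\varphi_{\Phi(u)}}\leq\delta_0\}$ (as in the discussion preceding the statement), and suppose the running solution stays in $\mathcal{R}_0$. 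Then set $\bbeta_t:=\Phi(u_t)$ and $v_t:=u_t-\varphi_{\bbeta_t}$, so that $\langle v_t,\psi^i_{\bbeta_t}\rangle=0$, i.e. $\Pi_{\bbeta_t}v_t=v_t$. Re-running the $\epsilon=0$ specialisation of the argument behind Lemma \ref{Lemma Beta SDE} — which is easier, there being no stochastic integral — gives $\dot{\bbeta}_t=\mathcal{V}(u_t,\bbeta_t)$ and the mild identity $v_t=\tilde{U}(0,t)v_0+\int_0^t\tilde{U}(s,t)\mathcal{K}_s\,ds$, where $\tilde{U}$ is the evolution family of $s\mapsto\mathcal{L}_{\bbeta_s}$; by the $\epsilon=0$ form of \eqref{eq: V ut beta t}, the bound on $D^{(2)}f$ (Assumption \ref{assump f}(v)) and the Lipschitz regularity of $\varphi_{\balpha,i}$ and $\psi^i_{\balpha}$, both $|\mathcal{V}(u_t,\bbeta_t)|$ and $\norm{\mathcal{K}_t}$ are $O(\norm{v_t}^2)$, uniformly on $\mathcal{R}_0$.

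The heart of the proof will be an exponential-decay estimate for $v_t$, obtained by a continuity (bootstrap) argument. I would take $\delta\leq\delta_0/(4\mathfrak{c})$ and start from $z$ with $\norm{v_0}=\norm{z-\varphi_{\Phi(z)}}\leq\delta$. Comparing the evolution family $\tilde{U}(s,t)$ with the frozen-coefficient semigroups $U_{\balpha}(\cdot)$ through a Duhamel expansion and using (i) $\Pi_{\bbeta_t}v_t=v_t$, (ii) the decay bound \eqref{eq: norm V bound} for $V_{\balpha}(t)=U_{\balpha}(t)-P_{\balpha}$, (iii) the Lipschitz dependence of $\mathcal{L}_{\balpha}$ and $P_{\balpha}$ on $\balpha$, and (iv) $|\dot{\bbeta}_t|=O(\norm{v_t}^2)$ — so that the total variation of $\bbeta_\cdot$ is small and $\mathcal{L}_{\bbeta_\cdot}$ is a small perturbation of the frozen linearisation, whence the exponential dichotomy on the range of $\Pi$ persists with a mildly degraded rate — one is led to an integral inequality of the form
\[
\norm{v_t}\leq 2\mathfrak{c}\,\norm{v_0}\,e^{-bt/2}+C\int_0^t e^{-b(t-s)/2}\,\norm{v_s}^2\,ds,
\]
valid on the set of times for which $\norm{v_s}\leq\delta_0$. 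A Gronwall/bootstrap argument applied to $t\mapsto\sup_{s\leq t}\big(e^{bs/2}\norm{v_s}\big)$ then shows that, for $\delta$ small enough, $\norm{v_t}\leq4\mathfrak{c}\,\delta\,e^{-bt/2}$ for all $t\geq0$; in particular $u_t$ never leaves $\mathcal{R}_0$, so $\bbeta_t$ and $v_t$ remain well-defined, and $\int_0^\infty\norm{v_s}^2\,ds\leq C\delta^2<\infty$.

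It then remains to pass to the limit $t\to\infty$. From $|\dot{\bbeta}_t|\leq C\norm{v_t}^2$ and the integrability just obtained, $t\mapsto\bbeta_t$ is Cauchy, so $\bbeta_t\to\bbeta_\infty$ for some $\bbeta_\infty\in\mathbb{R}^m$ with $|\bbeta_\infty-\Phi(z)|\leq C\delta^2$. Since $\norm{v_t}\to0$ and $\balpha\mapsto\varphi_{\balpha}-\varphi_0$ is (Lipschitz) continuous (Assumption \ref{assump varphi}), $u_t=\varphi_{\bbeta_t}+v_t\to\varphi_{\bbeta_\infty}$ in $E$, so one may set $\Theta(z):=\bbeta_\infty$. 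Uniqueness of the limiting parameter follows because $\balpha\mapsto\varphi_{\balpha}$ is injective on the ball $\{|\balpha-\Phi(z)|\leq C\delta^2\}$: its differential there is the frame $(\varphi_{\balpha,i})_{1\leq i\leq m}$, which is linearly independent as it spans the $m$-dimensional kernel of $\mathcal{L}_{\balpha}$ (Assumption \ref{Assump Spectral Gap}); hence the $\boldsymbol\eta_z$ in \eqref{eq: Theta Definition} is unique.

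The step I expect to be the main obstacle is this bootstrap decay estimate: in contrast to the deterministic references, the linearisation $\mathcal{L}_{\bbeta_t}$ now moves in time, so one must convert the (small) drift of $\bbeta_t$ into a higher-order perturbative contribution that Gronwall can absorb, and one must keep careful track of the dichotomy constant $\mathfrak{c}$ to guarantee that the flow never exits $\mathcal{R}_0$ — the region on which $\Phi$, and hence the whole decomposition $u_t=\varphi_{\bbeta_t}+v_t$, is defined. Everything else (global existence, the quadratic bounds on $\mathcal{V}$ and $\mathcal{K}$, Cauchyness of $\bbeta_\cdot$, local injectivity of $\varphi$) is routine given the assumptions already in force.
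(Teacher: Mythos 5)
Your proposal is correct in outline but takes a genuinely different route from the paper. The paper proves this lemma by a contraction-mapping argument in the spirit of \cite[Chapter~5]{Volpert1994}: it introduces the Banach space $\mathcal{X}$ of pairs $(v,\balpha)$ with the exponentially weighted norm $\norm{(v,\balpha)}_b$, defines the maps $\Gamma_u$ and $\Lambda_u$ in \eqref{eq: hat v definition}--\eqref{eq: Lambda u definition}, and shows (Lemma~\ref{Lemma fixed point contraction}) that $(\Gamma_u,\Lambda_u)$ maps a small ball $\mathcal{Z}_u(\delta_2)$ into itself and is a contraction there. Crucially, the candidate asymptotic phase $\balpha$ is \emph{frozen} throughout the fixed-point scheme, so one only ever needs the frozen-coefficient bound $\norm{V_\balpha(t)}\le\mathfrak{c}e^{-bt}$ of \eqref{eq: norm V bound} and never has to compare the time-dependent evolution family $\tilde U(s,t)$ with the frozen semigroups $U_{\balpha}$. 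That Duhamel comparison --- which you correctly identify as the ``main obstacle'' of your approach --- is therefore entirely sidestepped by the paper; in exchange, the paper must verify after the fact (in a separate lemma, Lemma~\ref{Lemma Regularity Theta}, via \eqref{eq: to establish bound decay hat v t}) that the fixed point $\hat v_t(u)$ decays at rate $\mathfrak{c}\norm{\hat v_0(u)}e^{-bt}$, whereas your bootstrap delivers the exponential decay of $v_t$ along the actual trajectory as a by-product. Your dynamical argument (track $u_t$, set $\bbeta_t=\Phi(u_t)$, use $\Pi_{\bbeta_t}v_t=v_t$ and $|\dot\bbeta_t|=O(\norm{v_t}^2)$ to obtain the integral inequality $\norm{v_t}\le 2\mathfrak{c}\norm{v_0}e^{-bt/2}+C\int_0^t e^{-b(t-s)/2}\norm{v_s}^2\,ds$, close with Gronwall/bootstrap, then pass $\bbeta_t\to\bbeta_\infty$) is sound and is essentially the Kapitula--Promislow style of orbital-stability-with-asymptotic-phase proof. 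It is somewhat heavier than the paper's proof at the step you flag, because the dichotomy for the nonautonomous family $\mathcal{L}_{\bbeta_t}$ must be propagated through a roughness/perturbation argument; but it does yield a uniform trajectory bound that the paper only gets later. Both proofs are valid; the paper's is shorter precisely because it never lets the phase wander during the estimate.
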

\begin{proof}
The proof employs a contraction mapping and is an adaptation of \cite[Chapter 5]{Volpert1994}. Define $\mathcal{X} \subset \mathcal{C}( [0,\infty) , H) \times \mathbb{R}^m$ to be the Banach Space of all $(v,\balpha)$ such that the following norm is finite,
\begin{equation}
\norm{(v,\balpha)}_b = \norm{\balpha} + \sup_{t\geq 0} \big\lbrace \exp(bt)\norm{v_t} \big\rbrace.
\end{equation}
For some $(v,\balpha) \in \mathcal{X}$, define $\Gamma_u(v,\balpha) \in \mathcal{C}([0,\infty),H)$ to be
  \begin{align}
\Gamma_u(v,\balpha)_t =& V_{\boldsymbol\alpha}(t) \cdot (u - \varphi_{\balpha}) + \int_0^t V_{\balpha}(t-s) \cdot \lbrace f(v_s + \varphi_{\balpha}) - f(\varphi_{\balpha}) - Df(\varphi_{\balpha})\cdot v_s \big\rbrace ds\nonumber \\
 & - \sum_{i=1}^m\varphi_{\balpha,i} \bigg\langle \psi^i_{\balpha} , \int_t^\infty  \lbrace f(v_s + \varphi_{\balpha}) - f(\varphi_{\balpha}) - Df(\varphi_{\balpha})\cdot v_s \big\rbrace ds \bigg\rangle,\label{eq: hat v definition}
 \end{align}
recalling the definition of $V_{\balpha}(t)$ in \eqref{eq: norm V bound}. For $u\in E$, define $\Lambda_u : \mathbb{R}^m \to \mathbb{R}^m$ to be, for $1\leq i \leq m$,
 \begin{multline}\label{eq: Lambda u definition}
 \Lambda_u(\balpha,v)^i =\Phi(u)^i +\big\langle \psi^i_{\balpha} - \psi^i_{\Phi(u)} , u - \varphi_{\Phi(u)} \big\rangle  -\big\langle \psi^i_{\balpha} , \varphi_{\balpha} - \varphi_{\Phi(u)} - (\alpha^i -\Phi(u)^i)\varphi_{\balpha,i} \big\rangle\\ + \bigg\langle \psi^i_{\balpha} , \int_0^\infty  \lbrace f(\varphi_{\balpha} + v_s) - f(\varphi_{\balpha}) - Df(\varphi_{\balpha})\cdot v_s \big\rbrace ds \bigg\rangle
 \end{multline}
It suffices for us to show that, for $\delta $ small enough, there exists a unique $(\boldsymbol\eta , v)$ such that $\boldsymbol\eta = \Lambda_u(\boldsymbol\eta , v)$ and $v = \Gamma_u(\boldsymbol\eta,v)$. This is because, using the fact that $\big\langle \psi^i_{\Phi(u)} , u - \varphi_{\Phi(u)} \big\rangle =0$, and $\langle \psi^i_{\balpha}  , \phi_{\balpha,i} \rangle = 1$, the existence of a fixed point $(\boldsymbol\eta, \hat{v}(u) )$ implies that
 \begin{equation}\label{eq: Lambda u definition}
\varphi_{\boldsymbol\eta}^i \bigg\langle \psi^i_{\boldsymbol\eta} ,u - \varphi_{\boldsymbol\eta} + \int_0^\infty  \lbrace f(\varphi_{\boldsymbol\eta} + \hat{v}_s(u) )- f(\varphi_{\boldsymbol\eta}) - Df(\varphi_{\boldsymbol\eta})\cdot \hat{v}_s(u) \big\rbrace ds \bigg\rangle=0.
 \end{equation}
Adding this to \eqref{eq: hat v definition}, and recalling that $P_{\boldsymbol\eta} = \sum_{i=1}^m \varphi_{\boldsymbol\eta} \langle \psi^i_{\boldsymbol\eta} , \cdot \rangle$, we obtain that
  \begin{multline}
 \hat{v}_t(u) = \big\lbrace V_{\boldsymbol\eta}(t) + P_{\boldsymbol\eta} \big\rbrace \cdot (u - \varphi_{\boldsymbol\eta}) + \int_0^t V_{\boldsymbol\eta}(t-s) \cdot \lbrace f(\hat{v}_s(u) + \varphi_{\boldsymbol\eta}) - f(\varphi_{\boldsymbol\eta}) - Df(\varphi_{\boldsymbol\eta})\cdot \hat{v}_s(u) \big\rbrace ds \\
   + P_{\boldsymbol\eta}\cdot \int_0^t  \lbrace f(\hat{v}_s(u) + \varphi_{\boldsymbol\eta}) - f(\varphi_{\boldsymbol\eta}) - Df(\varphi_{\boldsymbol\eta})\cdot \hat{v}_s(u) \big\rbrace ds .\label{eq: hat v definition 2}
 \end{multline}
 Since $U_{\boldsymbol\eta}(t) = P_{\boldsymbol\eta} + V_{\boldsymbol\eta}(t)$, we obtain that
   \begin{multline}
 \hat{v}_t(u) =  U_{\boldsymbol\eta}(t)  \cdot (u - \varphi_{\boldsymbol\eta}) + \int_0^t U_{\boldsymbol\eta}(t-s) \cdot \lbrace f(\hat{v}_s(u) + \varphi_{\boldsymbol\eta}) - f(\varphi_{\boldsymbol\eta}) - Df(\varphi_{\boldsymbol\eta})\cdot \hat{v}_s(u) \big\rbrace ds 
  \end{multline}
  Writing $\hat{u}_t(u) = \varphi_{\boldsymbol\eta} +  \hat{v}_t(u) $, this means that $\hat{u}_t$ satisfies the dynamics
  \begin{equation}\label{eq: hat u dynamics}
 \hat{u}_t(u) = P^A(t)\cdot u + \int_0^t P^A(t-s)\cdot f(\hat{u}_s(u)) ds.
\end{equation}
Since $\norm{\hat{v}_t(u)} \to 0$ as $t\to\infty$, this means that $\boldsymbol\eta = \Theta(u)$.

It remains for us to show that there exists a unique $(\boldsymbol\eta,\hat{v})$ such that $\boldsymbol\eta = \Lambda_u(\boldsymbol\eta , \hat{v})$ and $\hat{v} = \Gamma_u(\boldsymbol\eta,\hat{v})$. The existence and uniqueness follows from the contraction mapping theorem. The following lemma contains the identities necessary for us to be able to apply the theorem.
\begin{lemma}\label{Lemma fixed point contraction}
For constant $\delta_2 > 0$, let $\mathcal{Z}_u(\delta_2)$ be all $(\balpha,v) \in \mathcal{X}$ such that
\begin{align}
\norm{v}_b := \sup_{t\geq 0}\big\lbrace \exp(bt)\norm{v_t}\big\rbrace &\leq \delta_2 \\
\norm{\balpha - \Phi(u)} &\leq \delta_2^{3/2}.
\end{align}
For all sufficiently small $\delta_2$, and taking $\delta$ to be sufficiently small (recall that $\mathcal{R} = \lbrace u\in E : \norm{u - \Phi(u)} \leq \delta \rbrace$), for all $u\in \mathcal{R}$,
\begin{equation}\label{eq: Theta existence inclusion}
\big(\Lambda_u(\balpha,v) , \Gamma_u(\balpha,v) \big) \in \mathcal{Z}_u(\delta_2) \text{ whenever } (\balpha,v) \in \mathcal{Z}_u(\delta_2).
\end{equation}
One can also choose $\delta$ and $\delta_2$ to be such that there exists $\rho \in (0,1)$ such that for all $(\balpha ,v), (\bbeta , z) \in \mathcal{Z}_u(\delta_2)$,
\begin{multline}
\norm{\Lambda_u(\balpha,v) - \Lambda_u(\bbeta, z)} + \sup_{t\geq 0}\big\lbrace \exp(bt)\norm{\Gamma_u(\balpha,v)_t - \Gamma_u(\bbeta,z)_t}\big\rbrace  \\ \leq \rho \norm{\balpha - \bbeta} +\rho \sup_{t\geq 0}\big\lbrace \exp(bt)\norm{v_t-z_t} \big\rbrace .\label{eq: to prove Theta contract}
\end{multline}
\end{lemma}
\begin{proof}
We start by proving \eqref{eq: Theta existence inclusion}. Let $(\balpha,v) \in \mathcal{Z}_u(\delta_2)$. Define $F: \mathbb{R}^m \times H \to H$ to be
\[
F(\balpha, v) =  f(\varphi_{\balpha} +v) - f(\varphi_{\balpha}) - Df(\varphi_{\balpha})\cdot v
\]
The Frechet differentiability of $f$ implies that there is a constant $C_F$ such that $\norm{F(\balpha,v)} \leq C_F\norm{v}^2$.  Using the bound on $\norm{V_{\balpha}(t)}$ in \eqref{eq: norm V bound},
 \begin{multline}
 \norm{\Gamma_u(v,\balpha)} \leq \mathfrak{c}\norm{u-\varphi_{\balpha}}\exp(-bt) + \mathfrak{c}C_F\int_0^t \exp\lbrace 2sb -2tb \rbrace \norm{v_s}^2 ds +C_F\sum_{i=1}^m \norm{\varphi_{\balpha,i}}\int_t^{\infty} \norm{v_s}^2 ds.
 \end{multline}
 Using the triangle inequality,
 \[
 \norm{u-\varphi_{\balpha}} \leq \norm{u - \varphi_{\Phi(u)}} + \norm{\varphi_{\Phi(u)} - \varphi_{\balpha}} \leq \delta + \norm{\balpha - \Phi(u)}\norm{D\phi(w)},
 \]
 where $w$ is in the convex hull of $\varphi_{\Phi(u)}$ and $\varphi_{\balpha}$, using Taylor's Theorem. Since, by assumption, $\sup_{\balpha \in \mathbb{R}^m, 1\leq i \leq m}\big| \varphi_{\balpha,i} \big| < \infty$, we can choose $\delta$ and $\delta_2$ to be such that
 \begin{align*}
\mathfrak{c}\big\lbrace \delta + \delta_2^{3/2} \sup_{\balpha \in \mathbb{R}^m, 1\leq i \leq m}\big| \varphi_{\balpha,i} \big| \big\rbrace \leq \delta_2 / 3,
 \end{align*}
 and this ensures that
 \[
  \mathfrak{c}\norm{u-\varphi_{\balpha}}\exp(-bt) \leq \delta_2 / 3.
 \]
 Similarly, since $(\balpha,v) \in \mathcal{Z}(\delta_2)$, 
 \[
 \mathfrak{c}C_F\int_0^t \exp\lbrace 2sb -2tb \rbrace \norm{v_s}^2 ds \leq \mathfrak{c} C_F \delta_2^2 \times t \exp(-2tb)  \leq \exp(-bt)\delta_2 / 3,
\]
for all sufficiently small $\delta_2$. Finally,
\[
C_F\sum_{i=1}^m \norm{\varphi_{\balpha,i}}\int_t^{\infty} \norm{v_s}^2 ds \leq C_F \sum_{i=1}^m \norm{\varphi_{\balpha,i}}\delta_2^2 \int_t^{\infty} \exp(-2bs)ds \leq \exp(-bt)\delta_2 /3,
\]
for small enough $\delta_2$. Taken together, the above equations imply that
\begin{equation}
 \norm{\Gamma_u(v,\balpha)_t} \leq \exp(-bt)\delta_2,
\end{equation}
once $\delta_2$ is small enough. Bounding $\big|\Lambda(\balpha,v)^i -\Phi(u)^i\big|$ analogously,  
\begin{align*}
\big|\big\langle \psi^i_{\balpha} - \psi^i_{\Phi(u)} , u - \varphi_{\Phi(u)} \big\rangle \big| &= O\big( \norm{\Phi(u) - \balpha}\norm{u - \varphi_{\Phi(u)} } \big) = O\big( \delta \delta_2^{3/2}\big) \\
\big|\big\langle \psi^i_{\balpha} , \varphi_{\balpha} - \varphi_{\Phi(u)} - (\alpha^i -\Phi(u)^i)\varphi_{\balpha,i} \big\rangle\big| &= O\big(\norm{\Phi(u) - \balpha}^2 \big) \\
\bigg| \bigg\langle \psi^i_{\balpha} , \int_0^\infty  \lbrace f(\varphi_{\balpha} + v_s) - f(\varphi_{\balpha}) - Df(\varphi_{\balpha})\cdot v_s \big\rbrace ds \bigg\rangle \bigg| &= O\big( \delta_2^2 \big).
\end{align*}
Thus for small enough $\delta_2$, it must be that $\big(\Lambda_u(\balpha,v) , \Gamma_u(\balpha,v) \big) \in \mathcal{Z}_u(\delta_t)$, as required.

It remains for us to prove \eqref{eq: to prove Theta contract}. It can be shown \cite[Lemma 1.1, Chapter 5]{Volpert1994} that there exist constants $C,C_2 > 0$ such that for all $v,w \in H$, $\norm{v},\norm{w} \leq C_2$,
\begin{align*}
\norm{ F(\balpha,v) - F(\bbeta,v) } &\leq C\norm{\balpha-\bbeta} \norm{v} \\
\norm{ F(\balpha,v) - F(\balpha,w) } &\leq C(\norm{v} + \norm{w}) \norm{v-w}.
\end{align*}
These identities allow one to straightforwardly prove \eqref{eq: to prove Theta contract}.
\end{proof}
Lemma \ref{Lemma Theta Well Defined} now follows immediately from the application of a fixed point theorem to the results in Lemma \ref{Lemma fixed point contraction}. 
\end{proof}
Now that we have defined the isochronal phase $\Theta(u)$, our next step is to establish that it is twice continuously Frechet differentiable. For any $u\in \mathcal{R}$, define the following auxiliary variables $\lbrace \hat{u}_t(u) \rbrace_{t\geq 0}$, such that 
 \begin{align}
 \hat{u}_t(u) =& P^A(t)\cdot u + \int_0^t P^A(t-s) f(  \hat{u}_s(u) )ds.\label{eq: hat u dynamics} 
 \end{align}
 $\hat{u}_t(u)$ is the solution of the deterministic dynamics, started at $u$, in the absence of noise. We know that $\lim_{t\to\infty}\hat{u}_t(u) = \varphi_{\Theta(u)}$. Define $\hat{v}_t(u) = \hat{u}_t(u) - \varphi_{\Theta(u)}$. As we demonstrated in the proof of  Lemma \ref{Lemma fixed point contraction}, $\hat{v}_t(u)$ satisfies the identity
 \begin{multline}
\hat{v}_t(u) =  -\sum_{i=1}^m\varphi_{\Theta(u),i} \bigg\langle \psi^i_{\Theta(u)} ,\int_t^{\infty}   F\big( \Theta(u),\hat{v}_s(u)\big)  ds \bigg\rangle + \\
 V_{\Theta(u)}(t) \cdot (u - \varphi_{\Theta(u)}) + \int_0^t V_{\Theta(u)}(t-s) \cdot F\big( \Theta(u),\hat{v}_s(u)\big) ds.\label{eq: hat v definition 2}
 \end{multline}
 The following lemma contains useful identities on the regularity of the isochronal phase map.
 \begin{lemma}\label{Lemma Regularity Theta}
 One can choose $\delta$ to be sufficiently small that there exists a constant $\tilde{C}$ such that for all $u \in \mathcal{R}$,
 \begin{align}
\norm{\Phi(u) - \Theta(u)} &\leq \tilde{C}\norm{u - \varphi_{\Theta(u)}}^2 \label{eq: to establish error Theta Phi}\\
 \norm{\hat{v}(u) - V_{\Theta(u)}\cdot (u - \varphi_{\Theta(u)})}_b &\leq \tilde{C}\norm{u - \varphi_{\Theta(u)}}^2. \label{eq: to establish error v V}\\
   \norm{\hat{v}(u)}_b &\leq \mathfrak{c}\norm{\hat{v}_0(u)}\label{eq: to establish bound decay hat v t}\\
   \norm{\hat{v}(u) - \hat{v}(z)}_b &\leq \tilde{C}\norm{u - z} \text{ for all }u,z\in \mathcal{R}.\label{eq: Lipschitz hat v}
 \end{align}
 \end{lemma}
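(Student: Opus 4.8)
The plan is to read off all four bounds from the fixed-point identity \eqref{eq: hat v definition 2} for $\hat{v}_t(u)$, together with the contraction structure of Lemma \ref{Lemma fixed point contraction}, proving them in the order \eqref{eq: to establish bound decay hat v t}, \eqref{eq: to establish error v V}, \eqref{eq: to establish error Theta Phi}, \eqref{eq: Lipschitz hat v}. Throughout write $d := \norm{u - \varphi_{\Theta(u)}} = \norm{\hat{v}_0(u)}$, recall $\norm{V_{\balpha}(t)} \leq \mathfrak{c}e^{-bt}$ from \eqref{eq: norm V bound}, and recall $\norm{F(\balpha,v)} \leq C_F\norm{v}^2$ for $F(\balpha,v) := f(\varphi_{\balpha}+v) - f(\varphi_{\balpha}) - Df(\varphi_{\balpha})\cdot v$.

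For \eqref{eq: to establish bound decay hat v t} I would run a bootstrap. Put $R := \norm{\hat{v}(u)}_b$ and substitute $\norm{\hat{v}_s(u)} \leq Re^{-bs}$ into the three terms on the right of \eqref{eq: hat v definition 2}: the affine term $V_{\Theta(u)}(t)\cdot(u - \varphi_{\Theta(u)})$ contributes $\mathfrak{c}d\,e^{-bt}$, while the two nonlinear terms each contribute $O(R^2)e^{-bt}$ after integrating exponentials against $\norm{F(\Theta(u),\hat{v}_s(u))} \leq C_F R^2 e^{-2bs}$ (using $e^{-2bt}\le e^{-bt}$ and $\int_0^t e^{-bs}ds\le b^{-1}$). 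This yields $R \leq \mathfrak{c}d + C_1 R^2$; since a priori $R \leq \delta_2$, the quadratic term is absorbed for $\delta$ small, giving $R \leq \mathfrak{c}d$ (one may either accept the harmless $1+O(\delta)$ factor or enlarge $\mathfrak{c}$). Given this, \eqref{eq: to establish error v V} is immediate: subtracting the affine term from \eqref{eq: hat v definition 2} leaves exactly the two nonlinear terms, each now bounded using $\norm{F(\Theta(u),\hat{v}_s(u))} \leq C_F\mathfrak{c}^2 d^2 e^{-2bs}$ by $O(d^2)e^{-bt}$, so that $\norm{\hat{v}(u) - V_{\Theta(u)}\cdot(u-\varphi_{\Theta(u)})}_b \leq \tilde{C}d^2$.

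For \eqref{eq: to establish error Theta Phi} I would use the fixed-point equation $\Theta(u) = \Lambda_u(\Theta(u),\hat{v}(u))$ from \eqref{eq: Lambda u definition}. Set $\Delta := \norm{\Theta(u) - \Phi(u)}$; subtracting $\Phi(u)$ from that identity, the three remaining terms of $\Lambda_u$ are estimated as follows. The term $\langle \psi^i_{\Theta(u)} - \psi^i_{\Phi(u)}, u - \varphi_{\Phi(u)}\rangle$ is $O(\delta\,\Delta)$ by Lipschitz continuity of $\balpha\mapsto\psi^i_{\balpha}$ together with $\norm{u-\varphi_{\Phi(u)}}\le\delta$. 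The term $\langle \psi^i_{\Theta(u)}, \varphi_{\Theta(u)} - \varphi_{\Phi(u)} - (\Theta(u)^i-\Phi(u)^i)\varphi_{\Theta(u),i}\rangle$ is $O(\Delta^2)$: expanding $\varphi_{\Theta(u)} - \varphi_{\Phi(u)}$ to first order and using the orthogonality relations $\langle\psi^i_{\balpha},\varphi_{\balpha,j}\rangle = \delta(i,j)$ annihilates the part linear in $\Theta(u)-\Phi(u)$, leaving only a Taylor remainder controlled via Lipschitz continuity of $\balpha\mapsto\varphi_{\balpha,j}$. The term $\langle\psi^i_{\Theta(u)}, \int_0^\infty F(\Theta(u),\hat{v}_s(u))\,ds\rangle$ is $O(d^2)$ by \eqref{eq: to establish bound decay hat v t}. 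Hence $\Delta \leq C(\delta+\Delta)\Delta + Cd^2$; since a priori $\Delta\le\delta_2^{3/2}$, taking $\delta$ small absorbs the first term and gives $\Delta \leq \tilde{C}d^2$.

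Finally, \eqref{eq: Lipschitz hat v} is a Lipschitz-dependence-of-fixed-point statement: $(\Theta(u),\hat{v}(u))$ is the unique fixed point of $(\Lambda_u,\Gamma_u)$, which by Lemma \ref{Lemma fixed point contraction} is a $\rho$-contraction with $\rho$ independent of $u$. The map $\Gamma_u$ depends on $u$ only through the single affine term $V_{\balpha}(t)\cdot(u-\varphi_{\balpha})$, so $\norm{\Gamma_u(\balpha,v) - \Gamma_z(\balpha,v)}_b \leq \mathfrak{c}\norm{u-z}$; and $\Lambda_u$ depends on $u$ through $u$ itself and through $\Phi(u),\varphi_{\Phi(u)},\psi^i_{\Phi(u)}$, each Lipschitz in $u$ on $\mathcal{R}$ — here one uses that $\Phi$ is Lipschitz on $\mathcal{R}$, which follows from the implicit function theorem applied to \eqref{eq: G i definition} (as in \cite[Lemma 4.3.3]{Kapitula2013}), the matrix $\mathcal{M}(u,\Phi(u))$ differing from the identity matrix by $O(\delta)$ (see \eqref{eq: M definition}) and so being uniformly invertible. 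Thus $\sup_w\norm{(\Lambda_u,\Gamma_u)(w) - (\Lambda_z,\Gamma_z)(w)} \leq C\norm{u-z}$ over the relevant ball, and the standard bound $\norm{x_u-x_z}\le(1-\rho)^{-1}\sup_w\norm{(\Lambda_u,\Gamma_u)(w)-(\Lambda_z,\Gamma_z)(w)}$ for fixed points of uniformly contracting maps gives $\norm{\Theta(u)-\Theta(z)} + \norm{\hat{v}(u)-\hat{v}(z)}_b \leq \tilde{C}\norm{u-z}$, first when $\norm{u-z}$ is small enough that $(\Theta(z),\hat{v}(z))$ lies in $\mathcal{Z}_u(\delta_2)$ and then globally on the connected tubular neighbourhood $\mathcal{R}$ by chaining. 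I expect this last step to be the main obstacle: it is where one must handle the $u$-dependence of the contraction domains $\mathcal{Z}_u(\delta_2)$ and invoke Lipschitz continuity of the projection $\Phi$; once \eqref{eq: hat v definition 2} is in place, the first three bounds are essentially routine.
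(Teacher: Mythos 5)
Your treatment of \eqref{eq: to establish bound decay hat v t}, \eqref{eq: to establish error v V}, \eqref{eq: to establish error Theta Phi} follows the paper's own argument in all essentials: bootstrap a quadratic inequality in $R := \norm{\hat v(u)}_b$ from the fixed-point identity \eqref{eq: hat v definition 2}, then isolate the two nonlinear terms for the second bound, then expand $\Lambda_u(\Theta(u),\hat v(u))$ about $\Phi(u)$ and absorb the $O(\delta\Delta + \Delta^2)$ terms for the first bound. The only place you stop slightly short is the constant in \eqref{eq: to establish bound decay hat v t}: you propose to either accept a $1+O(\delta)$ factor or enlarge $\mathfrak{c}$, whereas the paper obtains exactly $\mathfrak{c}$ from the same quadratic $0 \le \hat{C}R^2 - R + \mathfrak{c}\norm{\hat v_0(u)}$ by taking the smaller root $R \le (1-\sqrt{1-4\mathfrak{c}\hat{C}\norm{\hat v_0(u)}})/(2\hat{C})$ and using that $y\mapsto\sqrt{1-y}$ lies beneath its tangent at $y=0$; that is the single extra observation you would need, and your inequality $R \le \mathfrak{c}d + C_1R^2$ is already the right quadratic. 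For \eqref{eq: Lipschitz hat v} the paper simply writes ``the proof is omitted,'' so you have gone beyond the source; your Lipschitz-dependence-of-fixed-point argument (uniform contraction constant $\rho$ from Lemma \ref{Lemma fixed point contraction}, the affine term as the sole $u$-dependence of $\Gamma_u$, Lipschitz continuity of $\Phi$ via the implicit function theorem and the uniform invertibility of $\mathcal{M}(u,\Phi(u))$, and chaining on the connected tubular neighbourhood) is the natural way to supply it, and you have correctly identified the one real subtlety, namely the $u$-dependence of the domains $\mathcal{Z}_u(\delta_2)$.
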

 \begin{proof}
 We start by establishing \eqref{eq: to establish bound decay hat v t}. Using the triangle inequality, we find that
 \begin{equation}
 \norm{\hat{v}_t(u)} \leq  C_F\sum_{i=1}^m\norm{\varphi_{\Theta(u),i}}\norm{\psi^i_{\Theta(u)}}  \int_t^{\infty}\norm{\hat{v}_s(u)}^2 ds + \mathfrak{c}\exp(-bt) \norm{\hat{v}_0(u)} + C_F\mathfrak{c} \int_0^t \exp(bs-bt)\norm{\hat{v}_s(u)}^2 ds.
 \end{equation}
 Multiplying both sides by $\exp(bt)$ and substituting the definition of $\norm{\hat{v}(u)}_b$, we obtain that
  \begin{align}
\exp(bt) \norm{\hat{v}_t(u)} \leq  &C_F \norm{\hat{v}(u)}^2_b\sum_{i=1}^m\norm{\varphi_{\Theta(u),i}}\norm{\psi^i_{\Theta(u)}} \exp(bt) \int_t^{\infty}\exp(-2bs) ds + \mathfrak{c}\norm{\hat{v}_0(u)}\nonumber \\ &+ \mathfrak{c} C_F\norm{\hat{v}(u)}^2_b \int_0^t \exp(-bs)ds.
 \end{align}
 We thus find that there exists a constant $\hat{C}$ such that
 \begin{equation}
0 \leq \hat{C} \norm{\hat{v}(u)}^2_b- \norm{\hat{v}(u)}_b + \mathfrak{c}\norm{\hat{v}_0(u)}
 \end{equation}
Finding the roots of the quadratic on the right hand side, and assuming that \newline$\delta_2 < \big(1 + \sqrt{1-4\mathfrak{c}\hat{C}\norm{\hat{v}_0(u)}}\big) / (2\hat{C})$ (recall that by definition $\norm{\hat{v}(u)}_b \leq \delta_2$), the above equation then implies that
 \begin{equation}
 \norm{\hat{v}(u)}_b \leq  \big(1 - \sqrt{1-4\mathfrak{c}\hat{C}\norm{\hat{v}_0(u)}}\big) / (2\hat{C})  \leq \mathfrak{c}\norm{\hat{v}_0(u)},
 \end{equation}
 since the curve $x \to \sqrt{x}$ is concave at $1$, it must lie beneath its tangent line. We have thus established \eqref{eq: to establish bound decay hat v t}.
 
 We saw in the previous lemma that $\norm{F(\Theta(u),v)} \leq C_F\norm{v}^2$. It then follows easily from \eqref{eq: hat v definition 2} that
 \begin{align*}
\norm{ \hat{v}_t(u) -  V_{\Theta(u)}(t) \cdot (u - \varphi_{\Theta(u)})} &\leq \text{Const}\times \exp(-bt) \norm{\hat{v}(u)}_b^2 \\
&\leq \text{Const}\times \mathfrak{c} \exp(-bt) \norm{ u -\varphi_{\Theta(u)}}^2,
 \end{align*}
thanks to \eqref{eq: to establish bound decay hat v t}. This establishes \eqref{eq: to establish error v V}. 
 
We next establish \eqref{eq: to establish error Theta Phi}. Recall that $(\Theta(u),\hat{v}(u))$ is a fixed point of \eqref{eq: Lambda u definition}, which means that $\Theta(u) = \Lambda_u\big( \Theta(u) , \hat{v}(u) \big)$. \eqref{eq: Lambda u definition} implies that
 \begin{multline}\label{eq: leading order Theta u}
 \norm{\Theta(u) - \Phi(u) -  \big\langle \psi^i_{\balpha} , \int_0^\infty  \lbrace f(\varphi_{\balpha} + \hat{v}_s) - f(\varphi_{\balpha}) - Df(\varphi_{\balpha})\cdot \hat{v}_s \big\rbrace ds \big\rangle} \\= O\big( \norm{\Theta(u)-\Phi(u)}\norm{u - \varphi_{\Phi(u)}} +  \norm{\Theta(u)-\Phi(u)}^2\big)
 \end{multline}
Furthermore, using \eqref{eq: to establish error v V},
\begin{align*}
 f(\varphi_{\balpha} + \hat{v}_s) - f(\varphi_{\balpha}) - Df(\varphi_{\balpha})\cdot \hat{v}_s &= D^{(2)}f(\varphi_{\balpha})\cdot \hat{v}_s \cdot \hat{v}_s + O\big( \norm{\hat{v}_s(u)}^3 \big) \\
 &=  D^{(2)}f(\varphi_{\balpha})\cdot (V_{\balpha}(s) \cdot \hat{v}_0(u)) \cdot (V_{\balpha}(s)\cdot \hat{v}_0(u) )+ O\big( \norm{u - \varphi_{\Theta(u)}}^3 \big)
\end{align*}
The proof of \eqref{eq: Lipschitz hat v} is omitted.
 
 \end{proof}

  \begin{lemma}\label{Lemma Theta Derivative Approximation}
(i) $\Theta: \mathcal{R} \to \mathbb{R}^m$ is twice continuously Frechet differentiable for all $u$ in the interior of $\mathcal{R}$. The first and second Frechet derivatives at $u \in \mathcal{R}$ are written as $D\Theta(u): H \to H$ and $D^{(2)}\Theta(u): H \times H \to H$ (it must be emphasized that we are defining $D\Theta(u)$ and $D^{(2)}\Theta(u)$ to act on $H$, not $E$). 

(ii) The function
\[
u \to \sum_{j=1}^\infty \big\lbrace D^{(2)}\Theta (u) \cdot B(u)e_j \cdot B(u)e_j \big\rbrace,
\]
where $\lbrace e_j \rbrace_{j\geq 1}$ is the orthonormal basis for $H$ of Assumption \ref{Assumption Theta}, is continuous over $\mathcal{R}$.

(iii) Furthermore there exists a constant $C$ such that for all $u\in \mathcal{R}$, and all $w\in H$, writing $\balpha = \Theta(u)$, for $1\leq i \leq m$,
\begin{align}
\big| D\Theta^i(u)\cdot w - \big\langle \psi^i_{\balpha} , w \big\rangle \big| \leq C \norm{u - \varphi_{\balpha}}\norm{w}&\label{eq: Theta approximation 1} \\
\| D^{(2)}\Theta^i(u) \cdot w \cdot w -2\sum_{j=1}^m \langle \psi^j_{\balpha} ,  w \rangle  \langle \psi^i_{\balpha,j} ,  w \rangle +\sum_{j,k=1}^m \big\langle \varphi_{\balpha,j} , \psi^i_{\balpha,k} &\big\rangle \langle \psi^j_{\balpha} ,w \rangle \langle \psi^k_{\balpha} ,w \rangle\nonumber\\ - \int_0^{\infty}\big\langle \psi^i_{\balpha} , D^{(2)}f(\varphi_{\balpha})\cdot V_{\balpha}(s) w \cdot V_{\balpha}(s) w \big\rangle ds  \| &\leq C \norm{u - \varphi_{\balpha}}\norm{w}^2.\label{eq: Theta approximation 2}
\end{align}
\end{lemma}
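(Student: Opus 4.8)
The plan is to build everything on the fixed-point characterisation of $(\Theta(u),\hat v(u))$ obtained in Lemma \ref{Lemma fixed point contraction}: $(\Theta(u),\hat v(u))$ is the unique fixed point in $\mathcal{Z}_u(\delta_2)$ of the map $\Psi_u:(\balpha,v)\mapsto(\Lambda_u(\balpha,v),\Gamma_u(\balpha,v))$, which by \eqref{eq: to prove Theta contract} is a contraction of some fixed constant $\rho<1$ uniformly in $u\in\mathcal{R}$. For part (i) I would invoke the smooth (parametrised) contraction mapping principle: if the joint map $\Psi:(u,\balpha,v)\mapsto\Psi_u(\balpha,v)$ is twice continuously Frechet differentiable on a neighbourhood of the graph $\{(u,\Theta(u),\hat v(u)):u\in\mathcal{R}\}$ — with $u$-perturbations taken in $H$, consistent with $\mathcal{R}\subset E=\varphi_0+H$ — and each $\Psi_u$ is a contraction of constant $\le\rho<1$, then $u\mapsto(\Theta(u),\hat v(u))$ is $C^2$ from $\mathcal{R}$ (with its $H$-topology) into $\mathcal{X}$; restricting to the $\mathbb{R}^m$-component gives $\Theta\in C^2$, with $D\Theta(u)\in\mathcal{L}(H,\mathbb{R}^m)$ and $D^{(2)}\Theta(u)\in\mathcal{L}(H\times H,\mathbb{R}^m)$. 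Verifying that $\Psi$ is $C^2$ I would split into three pieces: the $u$-dependence (affine in $\Gamma_u$; in $\Lambda_u$ it is linear in $u$ plus a dependence through $\Phi$, and $\Phi\in C^2(\mathcal{R},\mathbb{R}^m)$ follows from the implicit function theorem applied to $\mathcal{G}_i(u,\Phi(u))=0$, using invertibility of $\mathcal{M}(u,\Phi(u))$ on $\mathcal{R}$ and the $C^2$ regularity of $\balpha\mapsto\varphi_{\balpha},\psi^i_{\balpha}$ from Assumption \ref{assump varphi} and the assumptions on $\psi$); the $v$-dependence (it enters only through $F(\balpha,v_s):=f(\varphi_{\balpha}+v_s)-f(\varphi_{\balpha})-Df(\varphi_{\balpha})v_s$, which is $C^3$ in $v_s$ with uniformly bounded derivatives by Assumption \ref{assump f}(iv)--(v), and where $\|F(\balpha,v_s)\|\le C_Fe^{-2bs}\|v\|_b^2$ together with \eqref{eq: norm V bound} legitimises differentiating under the improper integrals $\int_0^t,\int_t^\infty$ in the $b$-weighted norm); and the $\balpha$-dependence, through $\varphi_{\balpha},\varphi_{\balpha,i},\psi^i_{\balpha}$ and, most delicately, through $V_{\balpha}(t)$.

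For part (ii), granting from part (iii) that $\sup_{u\in\mathcal{R}}\|D^{(2)}\Theta(u)\|<\infty$ (every term on the right of \eqref{eq: Theta approximation 2} is bounded), the trivial bound $|D^{(2)}\Theta(u)\cdot B(u)e_j\cdot B(u)e_j|\le\|D^{(2)}\Theta(u)\|\,\|B(u)e_j\|^2$ gives, for every $M$, $\sup_{u\in\mathcal{R}}\sum_{j\ge M}\big|D^{(2)}\Theta(u)\cdot B(u)e_j\cdot B(u)e_j\big|\le\big(\sup_{u\in\mathcal{R}}\|D^{(2)}\Theta(u)\|\big)\sup_{u\in E}\sum_{j\ge M}\|B(u)e_j\|^2$, which tends to $0$ as $M\to\infty$ by Assumption \ref{Assumption Theta}. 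Hence the series converges uniformly on $\mathcal{R}$; each partial sum is continuous by part (i) and the continuity of $u\mapsto B(u)$, so the limit is continuous.

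Part (iii) is a pair of Taylor expansions about the manifold, which I would organise as follows. Write $\balpha=\Theta(u)$. Differentiating $\mathcal{G}_i(u,\Phi(u))=0$ once gives $D\Phi(u)\cdot w=\mathcal{N}(u,\Phi(u))\big(\langle\psi^j_{\Phi(u)},w\rangle\big)_j$; since $\mathcal{M}_{ij}(u,\Phi(u))=\delta(i,j)-\langle u-\varphi_{\Phi(u)},\psi^i_{\Phi(u),j}\rangle$ and $\|\Phi(u)-\balpha\|=O(\|u-\varphi_{\balpha}\|^2)$ by \eqref{eq: to establish error Theta Phi} with Lipschitz continuity of $\psi$, this yields $D\Phi^i(u)\cdot w=\langle\psi^i_{\balpha},w\rangle+O(\|u-\varphi_{\balpha}\|\|w\|)$; differentiating a second time and discarding all terms carrying a factor $\langle u-\varphi_{\Phi(u)},\cdot\rangle$ (and using the integration-by-parts identities for $\psi$ and $\mathcal{N}=\mathcal{M}^{-1}=I+O(\|u-\varphi_{\balpha}\|)$) produces the two lower-order terms in \eqref{eq: Theta approximation 2} for $D^{(2)}\Phi^i(u)\cdot w\cdot w$, up to $O(\|u-\varphi_{\balpha}\|\|w\|^2)$. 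Next, the fixed-point relation $\Theta(u)=\Lambda_u(\Theta(u),\hat v(u))$, combined with $F(\balpha,\hat v_s(u))=\tfrac12D^{(2)}f(\varphi_{\balpha})\hat v_s(u)\cdot\hat v_s(u)+O(\|\hat v_s(u)\|^3)$ and $\hat v_s(u)=V_{\balpha}(s)(u-\varphi_{\balpha})+O(\|u-\varphi_{\balpha}\|^2)$ in the $b$-weighted norm (from \eqref{eq: to establish error v V}), gives
\[
\Theta^i(u)=\Phi^i(u)+\tfrac12\int_0^\infty\big\langle\psi^i_{\balpha},D^{(2)}f(\varphi_{\balpha})V_{\balpha}(s)(u-\varphi_{\balpha})\cdot V_{\balpha}(s)(u-\varphi_{\balpha})\big\rangle\,ds+O(\|u-\varphi_{\balpha}\|^3).
\]
Differentiating this in $u$ — once for \eqref{eq: Theta approximation 1}, twice for \eqref{eq: Theta approximation 2} — using $D_u(u-\varphi_{\Theta(u)})\cdot w=w-\sum_j\varphi_{\balpha,j}D\Theta^j(u)\cdot w=\Pi_{\balpha}w+O(\|u-\varphi_{\balpha}\|\|w\|)$, the identity $V_{\balpha}(s)\Pi_{\balpha}=V_{\balpha}(s)$, the Lipschitz estimates of Lemma \ref{Lemma Regularity Theta}, and the $C^2$ bounds from part (i) (so that $D^{(2)}\Theta$-terms multiplied by $O(\|u-\varphi_{\balpha}\|)$ factors are of the stated error order), produces \eqref{eq: Theta approximation 1} and \eqref{eq: Theta approximation 2}.

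The hard part will be the $\balpha$-dependence of $V_{\balpha}(t)$ in part (i): one needs $\balpha\mapsto V_{\balpha}(t)$ to be twice continuously differentiable in operator norm, uniformly after weighting by $e^{bt}$, so that the integral terms of $\Gamma_u,\Lambda_u$ are $C^2$ on the weighted space $\mathcal{X}$. This is delicate because $V_{\balpha}(t)=\mathcal{T}_{\balpha}V_0(t)\mathcal{T}_{-\balpha}$ and the isometries $\mathcal{T}_{\balpha}$ are in general only strongly continuous — not Frechet differentiable — on $H$. I would circumvent this with the Dunford (contour-integral) representation of the spectral projection $P_{\balpha}$ and of $V_{\balpha}(t)$ in terms of the resolvent, whose regularity in $\balpha$ follows from the uniform resolvent bound assumed in Section \ref{eq: technical}, together with the smoothing of $V_0(t)$ for $t>0$ and the uniform exponential decay \eqref{eq: norm V bound} to dominate the tails and the improper integrals; in the elliptic case one may instead differentiate the variational equation of the deterministic flow directly. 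A secondary, purely bookkeeping, obstacle is ensuring in the double differentiation of part (iii) that every discarded contribution is genuinely $O(\|u-\varphi_{\balpha}\|\|w\|^2)$ once the $u$-dependence of $\balpha=\Theta(u)$ itself is taken into account.
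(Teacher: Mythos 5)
Your overall strategy -- building everything on the fixed-point characterisation of $(\Theta(u),\hat v(u))$ and then Taylor-expanding about the manifold -- is the same as the paper's, and your treatments of parts (ii) and (iii) are essentially identical to the paper's (the paper simply says ``(ii) follows from Assumption \ref{Assumption Theta}'' and part (iii) is obtained exactly as you describe, by differentiating $\mathcal{G}_j(u,\Phi(u))=0$, using $\mathcal{N}(u,\Phi(u))=I+O(\norm{u-\varphi_{\Theta(u)}})$, integration by parts, and the leading-order relation \eqref{eq: leading order Theta u}). The genuine divergence is in part (i). You differentiate the contraction $\Psi_u=(\Lambda_u,\Gamma_u)$, and you correctly flag that because $\Gamma_u$ contains $V_{\balpha}(t)$, this forces you to prove that $\balpha\mapsto V_{\balpha}(t)$ is twice Frechet-differentiable (uniformly after $e^{bt}$-weighting) -- which is indeed delicate, since $V_{\balpha}(t)=\mathcal{T}_{\balpha}V_0(t)\mathcal{T}_{-\balpha}$ and $\mathcal{T}_{\balpha}$ is only strongly continuous. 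Your proposed workaround via the Dunford resolvent representation is plausible (the resolvent of $\mathcal{L}_{\balpha}=A+Df(\varphi_{\balpha})$ is $C^2$ in $\balpha$ because $\balpha\mapsto Df(\varphi_{\balpha})$ is, by the resolvent identity), but it is extra machinery. The paper sidesteps the issue entirely: rather than differentiating $\Gamma_u$, it implicitly differentiates the \emph{original flow equation} \eqref{eq: hat u dynamics}, $\hat u_t(u)=P^A(t)u+\int_0^t P^A(t-s)f(\hat u_s(u))ds$, whose linearisation is expressed through $P^A$ and $Df(\hat u_s)$ and requires no regularity of $V_{\balpha}$ in $\balpha$; it then couples this with the relation $\Theta(u)=\Lambda_u(\Theta(u),\hat v(u))$, and $\Lambda_u$ does not contain $V_{\balpha}$ at all, only $\varphi_{\balpha}$, $\psi^i_{\balpha}$ and the integral of $F(\balpha,v_s)$. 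Writing $\delta\hat v=\delta\hat u-\sum_j\varphi_{\Theta(u),j}(D\Theta)^j$ and solving $D\Theta(u)\cdot z=D\Lambda_u(\Theta(u),\hat v(u))\cdot(D\Theta(u)\cdot z,\delta\hat v(u)\cdot z)$ then gives $D\Theta$ directly. So your route is correct but buys you the $V_{\balpha}$-regularity obstacle for free; the paper's choice of which two ``fixed-point identities'' to differentiate (the flow equation rather than $\Gamma_u$) is the device that removes it.
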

\begin{proof}
The Frechet Differentiability of $\Theta$ is established by `implicitly differentiating' the fixed point identities \eqref{eq: hat u dynamics} and  $\Theta(u) = \Lambda_u\big( \Theta(u) , \hat{v}(u) \big)$ that define $(\Theta(u),\hat{v}_u)$. That is, fix $z\in H$ and define $w_n = u + n^{-1}z \in \mathcal{R}$, for large enough $n > 0$. We then find that there exists
$(D\Theta(u)\cdot z , \delta \hat{v}(u)\cdot z) \in \mathcal{X}$ such that
\begin{align}
\lim_{n\to\infty} \norm{ n \delta\hat{v}(w_n) - n\hat{v}(u) - \delta\hat{v}(u)\cdot z  }_b =& 0 \text{ where }\\
\delta\hat{v}_t(u) =& \delta\hat{u}_t(u) - \sum_{j=1}^m \varphi_{\Theta(u),j}\big(D\Theta(u)\cdot z \big)^j \\
\text{For all }t\geq 0 \; \; , \; \; \delta\hat{u}_t(u)\cdot z =& P^A(t)\cdot z + \int_0^t P^A(t-s)\cdot Df(\hat{u}_s(u))\cdot \big( \delta\hat{u}_s(u)\cdot z\big) ds\nonumber\\
D\Theta(u)\cdot z =& D\Lambda_u\big(\Theta(u) , \hat{v}(u)\big) \cdot ( D\Theta(u)\cdot z , \delta\hat{v}(u)\cdot z ),
\end{align}
and in this last expression, $D\Lambda_u: \mathcal{X} \to \mathbb{R}^m$ denotes the Frechet Derivative. The second Frechet Derivative, in directions $w,z \in H$, is established analogously, as follows. We find that there exists $\delta^{(2)} \hat{v}(u)\cdot w \cdot z \in \mathcal{C}([0,\infty), H)$ such that $\norm{\delta^{(2)} \hat{v}(u)\cdot w \cdot z }_b < \infty$ and $D^{(2)}\Theta(u)\cdot w \cdot z \in \mathbb{R}^m$ such that $\norm{ \delta^{(2)} \hat{v}(u)\cdot w \cdot z }_b < \infty$ and for all $t\geq 0$,
\begin{align}
\delta^{(2)}\hat{v}_t(u)\cdot w \cdot z =&  \int_0^t P^A(t-s)\cdot \big\lbrace Df(\hat{u}_s(u))\cdot \big(\delta^{(2)}\hat{v}_s(u)\cdot w \cdot z\big) \\&+D^{(2)}f(\hat{u}_s(u))\cdot \big(\delta\hat{v}_s(u)\cdot w\big) \cdot \big(\delta\hat{v}_s(u) \cdot z\big)\big\rbrace ds \\
D^{(2)}\Theta(u)\cdot w \cdot z =& D^{(2)}\Lambda_u\big(\Theta(u) , \hat{v}(u)\big) \cdot ( D\Theta(u)\cdot z , \delta\hat{v}(u)\cdot z ) \cdot ( D\Theta(u)\cdot w , \delta\hat{v}(u)\cdot w )\nonumber \\&+D\Lambda_u\big(\Theta(u) , \hat{v}(u)\big) \cdot ( D^{(2)}\Theta(u)\cdot w \cdot z , \delta^{(2)}\hat{v}(u)\cdot w \cdot z )
\end{align}
(ii) follows from Assumption \ref{Assumption Theta}.

The proof of \eqref{eq: Theta approximation 1} and \eqref{eq: Theta approximation 2} proceeds from the approximation of $\Theta$ in \eqref{eq: leading order Theta u}. This result means that $D\Phi(u)\cdot w = D\Theta(u)\cdot w$, to leading order. Applying the implicit function theorem to the identities $ \mathcal{G}_j(u,\Phi(u)) = 0$ (for $1\leq j \leq m$) implies that for each $1\leq i \leq m$ and $w\in H$,
\begin{align}
\big(D\Phi(u)\cdot w \big)^i =& \sum_{j=1}^m \mathcal{N}_{ij}(u,\balpha) \langle w,\psi^j_{\balpha} \rangle \\
\big(D^{(2)}\Phi(u)\cdot w \cdot w\big)^i  =&  \sum_{r=1}^m \mathcal{N}_{ir}(u,\balpha) \bigg\lbrace  2\sum_{j,p=1}^m \mathcal{N}_{jp}(u,\balpha) \langle \psi^p_{\balpha} , w\rangle \langle \psi^r_{\balpha,j},w \rangle\nonumber\\
+   &\sum_{j,k,p,q=1}^m \langle u , \psi^r_{\balpha,jk} \rangle \mathcal{N}_{jp}(u,\balpha)\mathcal{N}_{kq}(u,\balpha) \langle  \psi_{\balpha}^p , w\rangle \langle  \psi^q_{\balpha},w \rangle \bigg\rbrace .
\end{align}
However $\mathcal{N}_{ij}(u,\Phi(u)) = \delta(i,j) + O(\| u - \varphi_{\Theta(u)}\|)$, and 
\[
\langle u , \psi^r_{\balpha,jk}\rangle = \langle \varphi_{\balpha}, \psi^r_{\balpha,jk} \rangle + O(\| u - \varphi_{\Theta(u)}\|)=- \langle \varphi_{\balpha,j}, \psi^r_{\balpha,k} \rangle + O(\| u - \varphi_{\Theta(u)}\|),
\]
using integration by parts, and we thus find that
\begin{align}
(D\Phi(\varphi_{\balpha})\cdot w)^i =& \big\langle \psi^i_{\balpha} , w \big\rangle +O\big(\norm{u - \varphi_{\balpha}}\norm{w}\big) \\
\big(D^{(2)}\Phi( \varphi_{\balpha})\cdot w \cdot w\big)^i =& 2\sum_{j=1}^m \langle \psi^j_{\balpha} ,  w \rangle  \langle \psi^i_{\balpha,j} ,  w \rangle -\sum_{j,k=1}^m \big\langle \varphi_{\balpha,j} , \psi^i_{\balpha,k} \big\rangle \langle \psi^j_{\balpha} ,w \rangle \langle \psi^k_{\balpha} ,w \rangle\nonumber \\ &+ O\big(\norm{u - \varphi_{\balpha}}\norm{w}^2\big)
\end{align}
Finally, we apply the approximation in \eqref{eq: leading order Theta u}, and use the approximation \newline$ \norm{\hat{v}(u) - V_{\Theta(u)}\cdot (u - \varphi_{\Theta(u)})}_b \leq \tilde{C}\norm{u - \varphi_{\Theta(u)}}^2$ from Lemma \ref{Lemma Regularity Theta}.
\end{proof}
We are now ready to define a stochastic process for the isochronal phase. For $t < \tau_i := \inf\lbrace s \geq 0: u_s \notin \mathcal{R} \rbrace$, define the $\mathbb{R}^m$-valued stochastic process $\bgamma_t$ to be
\begin{equation}\label{eq: existence gamma}
\bgamma_t = \epsilon^2 \int_0^t \sum_{j=1}^\infty \big\lbrace D^{(2)}\Theta (u_s) \cdot B(u_s)e_j \cdot B(u_s)e_j \big\rbrace ds + \epsilon \int_0^t D\Theta(u_s) \cdot B(u_s)dW_s,
\end{equation}
where $\lbrace e_j \rbrace_{j=1}^\infty$ is the orthonormal basis of $H$ of Assumption \ref{Assumption Theta}. Since the coefficient functions in the above integral are continuous, $\bgamma_t$ is well-defined.

\begin{lemma}\label{Lemma gamma SDE}
For all $t < \tau_i$, $\bgamma_t = \Theta(u_t)$.
\end{lemma}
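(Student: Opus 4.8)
The plan is to show that $t\mapsto\Theta(u_t)$ satisfies the same stochastic integral equation as $\bgamma_t$, by an It\^o-type argument adapted to the mild formulation, along the lines of the proof of Lemma~\ref{Lemma Beta SDE}. The conceptual key is the \emph{flow-invariance} of the isochronal phase: writing $\hat u_s(\cdot)$ for the noise-free semiflow \eqref{eq: hat u dynamics}, the semigroup identity $\hat u_s(\hat u_r(u))=\hat u_{s+r}(u)$ together with $\hat u_t(u)\to\varphi_{\Theta(u)}$ (Lemma~\ref{Lemma Theta Well Defined}) gives $\Theta(\hat u_s(u))=\Theta(u)$ for all $s\ge 0$ and $u\in\mathcal{R}$. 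Informally this says $D\Theta(u)\cdot(Au+f(u))=0$, which is exactly why the drift of $\bgamma_t$ in \eqref{eq: existence gamma} contains only the second-order It\^o correction and no first-order term.

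Because $A$ is unbounded and $u_t$ is only a mild solution, I would not differentiate $\Theta(u_t)$ directly. Instead, following Lemma~\ref{Lemma Beta SDE}, introduce stopping times $\xi_n\uparrow\tau_i$ that keep $\norm{u_t-\varphi_{\Theta(u_t)}}$, the relevant stochastic convolutions and $\norm{\bgamma_t}$ bounded, and work on $[0,t\wedge\xi_n]$. For a partition $\Pi=(t_k)$ with mesh $\norm{\Pi}\to 0$ and steps $h_k=t_{k+1}-t_k$, compare $u_{t_{k+1}}$ with the noise-free flow of $u_{t_k}$ over that step: from Proposition~\ref{K-S} and \eqref{eq: hat u dynamics},
\[
R_k:=u_{t_{k+1}}-\hat u_{h_k}(u_{t_k})=\epsilon\int_{t_k}^{t_{k+1}}P^A_{t_{k+1}-s}B(s,u_s)\,dW_s+\int_{t_k}^{t_{k+1}}P^A_{t_{k+1}-s}\big[f(u_s)-f(\hat u_{s-t_k}(u_{t_k}))\big]\,ds,
\]
where the drift term is $O(h_k^{3/2})$ in $L^2$ since $\norm{u_s-\hat u_{s-t_k}(u_{t_k})}=O(\sqrt{h_k})$ in $L^2$, uniformly for $s$ in the step (it is dominated by the stochastic convolution) and $f$ is Lipschitz. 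The point of comparing against $\hat u_{h_k}(u_{t_k})$, rather than against $u_{t_k}$, is that $\Theta(\hat u_{h_k}(u_{t_k}))=\Theta(u_{t_k})$, so by Taylor's theorem for the $C^2$ map $\Theta$ (Lemma~\ref{Lemma Theta Derivative Approximation}(i)),
\[
\Theta(u_{t_{k+1}})-\Theta(u_{t_k})=D\Theta\big(\hat u_{h_k}(u_{t_k})\big)\cdot R_k+\int_0^1(1-\theta)\,D^{(2)}\Theta\big(\hat u_{h_k}(u_{t_k})+\theta R_k\big)\cdot R_k\cdot R_k\,d\theta;
\]
crucially no unbounded operator now appears, because all the awkward drift — the term $(P^A_{h_k}-I)u_{t_k}+\int_0^{h_k}P^A_{h_k-r}f(\hat u_r(u_{t_k}))\,dr$ — has been hidden inside $\hat u_{h_k}(u_{t_k})$, on which $\Theta$ is constant.

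Summing over $k$ and letting $\norm{\Pi}\to 0$: in the first-order sum $D\Theta(\hat u_{h_k}(u_{t_k}))\to D\Theta(u_{t_k})$ uniformly (continuity of $D\Theta$, and $\norm{\hat u_{h_k}(u_{t_k})-u_{t_k}}\to 0$ via Assumption~\ref{eq: uniform continuity P A}); the drift remainder of $R_k$ contributes nothing in the limit (routine $L^2$ estimate, as in Lemma~\ref{Lemma Beta SDE}); and $\epsilon\int_{t_k}^{t_{k+1}}P^A_{t_{k+1}-s}B(s,u_s)\,dW_s$ may be replaced by $\epsilon B(t_k,u_{t_k})(W_{t_{k+1}}-W_{t_k})$ up to an $L^2$-error summing to zero (Assumption~\ref{eq: uniform continuity P A}, continuity of $u\mapsto B(\cdot,u)$, Burkholder--Davis--Gundy); so the first-order sum converges to $\epsilon\int_0^{t\wedge\xi_n}D\Theta(u_s)\cdot B(s,u_s)\,dW_s$. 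In the second-order sum only the stochastic-convolution part of $R_k\otimes R_k$ survives, with conditional quadratic variation $\epsilon^2\sum_j\int_{t_k}^{t_{k+1}}\norm{P^A_{t_{k+1}-s}B(s,u_s)e_j}^2\,ds$; interchanging the $j$-sum with the limit (justified by Assumption~\ref{Assumption Theta} and Lemma~\ref{Lemma Theta Derivative Approximation}(ii)) and again using Assumption~\ref{eq: uniform continuity P A}, it converges to the drift integral in \eqref{eq: existence gamma}. Hence $\Theta(u_{t\wedge\xi_n})-\Theta(u_0)=\bgamma_{t\wedge\xi_n}$ for every $n$; since $\Theta(u_0)=\Theta(\varphi_{\bar\bbeta})=\bar\bbeta$ and $\xi_n\uparrow\tau_i$, the identity $\bgamma_t=\Theta(u_t)$ follows for all $t<\tau_i$ (with the additive constant normalised; by isometry invariance one may take $\bar\bbeta=0$ without loss).

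The hardest step will be the convergence of the first-order sum: showing that the stochastic convolution over each sub-interval can be replaced by the simple increment $\epsilon B(t_k,u_{t_k})(W_{t_{k+1}}-W_{t_k})$ at an $L^2$-cost that vanishes after summation, and that the non-martingale drift remainder of $R_k$ likewise sums to zero. This requires the careful $L^2$/Burkholder bookkeeping of Lemma~\ref{Lemma Beta SDE}, with Assumption~\ref{eq: uniform continuity P A} used to absorb the factor $P^A_{t_{k+1}-s}-I$. By contrast the second-order sum is routine once Assumption~\ref{Assumption Theta} is available, and the flow-invariance identity $\Theta\circ\hat u_s=\Theta$, while conceptually essential, is elementary.
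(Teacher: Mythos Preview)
Your proposal is correct and follows essentially the same strategy as the paper: localise by stopping times, partition the time interval, Taylor-expand $\Theta$ to second order, use the flow-invariance $\Theta\circ\hat u_s=\Theta$ to kill the deterministic first-order contribution, and identify the stochastic first-order sum with the stochastic integral and the second-order sum with the It\^o correction via Assumptions~\ref{eq: uniform continuity P A} and~\ref{Assumption Theta}.

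The one organisational difference is worth noting. The paper expands $\Theta(u_{t_{k+1}})-\Theta(u_{t_k})$ about $u_{t_k}$, then must separately argue that $D\Theta(u_{t_k})$ applied to the deterministic increment $(P^A(h_k)-I)u_{t_k}+\int P^A f(u_s)\,ds$ vanishes in the limit; to do this it invokes flow-invariance via a \emph{second} Taylor expansion (obtaining $D\Theta(\hat{\hat u}_i)\cdot(\hat u_{t_{i+1}}-\hat u_{t_i})=0$ and then re-expanding). You instead expand about $\hat u_{h_k}(u_{t_k})$ from the outset, so the flow-invariance is built in and only the small remainder $R_k$ needs to be analysed. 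Your route is a little cleaner and avoids the double expansion; the paper's route keeps the Taylor base-point at the actual trajectory $u_{t_k}$, which makes the second-order term match the paper's earlier quadratic-variation computations verbatim. Both lead to the same estimates and both rely on Assumption~\ref{eq: uniform continuity P A} to control $\norm{\hat u_{h_k}(u_{t_k})-u_{t_k}}=o(h_k^{1/2})$.
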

\begin{proof} 
The proof is essentially a generalization of Ito's Lemma to our infinite dimensional problem: it is similar to the proof of Lemma \ref{Lemma Beta SDE}. For some $\hat{\epsilon} > 0$, define  
\[
\xi^n = \inf\big\lbrace t\leq \tau_i - \hat{\epsilon} : \norm{u_t - \varphi_{\bgamma_t}} = n \text{ or }\sup_{1\leq i \leq m} |\gamma^i_t | = n\big\rbrace
\]
For any $T > 0$, we discretize $[0,T \wedge \xi_n]$ into a partition $\lbrace t_i \rbrace_{i=1}^M$. Using a second-order Taylor expansion (which is possible because $\Theta$ is twice-Frechet-differentiable, as noted in \eqref{Lemma Theta Derivative Approximation}),
\begin{equation}
\Theta(u_t) - \Theta(u_0) = \sum_{i=1}^M \big\lbrace D\Theta(u_{t_i})\cdot ( u_{t_{i+1}} - u_{t_i}) + D^{(2)}\Theta(\tilde{u}_{i})\cdot ( u_{t_{i+1}} - u_{t_i}) \cdot ( u_{t_{i+1}} - u_{t_i}) \big\rbrace
\end{equation}
where $\tilde{u}_i = \lambda_i u_{t_i} + (1-\lambda_i) u_{t_{i+1}}$ for some $\lambda_i \in [0,1]$. Now
\begin{equation}\label{eq: in Theta proof u t i decomposition}
u_{t_{i+1}} - u_{t_i} = \lbrace P^A(t_{i+1} - t_i) - I \rbrace \cdot u_{t_i} + \int_{t_i}^{t_{i+1}} P^A(t_{i+1} - s)f(u_s) ds  +\epsilon \int_{t_i}^{t_{i+1}} P^A(t_{i+1} - s) B(u_s) dW_s
\end{equation}
As the partition $\Delta \to 0$, 
\begin{align}
  \sum_{i=1}^M D\Theta(u_{t_i}) \int_{t_i}^{t_{i+1}} P^A(t_{i+1} - s) B(u_s) dW_s &\to \int_0^{T\wedge \xi_n} D\Theta(u_s)\cdot B(u_s) dW_s.
\end{align}
We now show that as $\Delta \to 0$,
\begin{equation}\label{eq: to show first derivative zero}
\sum_{i=1}^M D\Theta\big(\hat{u}_{t_i}  \big)\cdot\bigg\lbrace  \lbrace P^A(t_{i+1} - t_i) - I \rbrace \cdot u_{t_i} + \int_{t_i}^{t_{i+1}} P^A(t_{i+1} - s)f(u_s) ds \bigg\rbrace \to 0.
\end{equation}
For $t\in [t_i , t_{i+1}]$, define $\hat{u}_t$ to satisfy the deterministic flow with initial condition $u_{t_i}$, i.e. $\hat{u}_{t_i} = u_{t_i}$ and
\begin{equation} \label{eq: deterministic flow semi group}
\hat{u}_t = P^A(t - t_i)u_{t_i} + \int_{t_i}^{t} P^A(t - s) f(\hat{u}_s) ds.
\end{equation}
Now for all $t\in [t_i,t_{i+1}]$, $\Theta(\hat{u}_t) = \Theta(u_{t_i})$, because the isochronal phase is (by definition) invariant under the deterministic flow. Since $\Theta$ is Frechet differentiable, by Taylor's Theorem there must exist $\hat{\lambda}_i \in [0,1]$ such that, writing $\hat{\hat{u}}_i =\hat{\lambda}_i \hat{u}_{t_i} + (1-\hat{\lambda}_i) \hat{u}_{t_{i+1}} $,
\begin{equation}
D\Theta\big(\hat{\hat{u}}_i  \big)\cdot (\hat{u}_{t_{i+1}} - \hat{u}_{t_i}) = 0.
\end{equation}
Applying a second Taylor expansion to the above identity, one obtains that
\begin{equation}
D\Theta\big(\hat{u}_{t_i}  \big)\cdot (\hat{u}_{t_{i+1}} - \hat{u}_{t_i}) + D^{(2)}\Theta\big( \tilde{\hat{u}}_i  \big)\cdot (\hat{u}_{t_{i+1}} - \hat{u}_{t_i}) \cdot (\hat{\hat{u}}_i - \hat{u}_{t_i}) = 0,
\end{equation}
and $\tilde{\hat{u}}_i $ is in the convex hull of $\hat{u}_i$ and $\hat{\hat{u}}_i$. Now it follows straightforwardly from the uniformly Lipschitz property of $f$ that
\begin{multline}
\bigg\| \hat{u}_{t_i+1} - \hat{u}_{t_i} -  \lbrace P^A(t_{i+1} - t_i) - I \rbrace \cdot u_{t_i} + \int_{t_i}^{t_{i+1}} P^A(t_{i+1} - s)f(u_s) ds \bigg\|\\ \leq \text{Const} \times (t_{i+1} - t_i) \times \sup_{s\in [t_i, t_{i+1}]} | u_s - \hat{u}_s(u_{t_i})|.
\end{multline}
One easily checks using Gronwall's Inequality (and the Lipschitz nature of $f$) that
\[
|u_s - \hat{u}_s(u_{t_i})| = O\big( (t_{i+1} - t_i) \big).
\]
We thus find that
\begin{multline}
D\Theta\big(\hat{u}_{t_i}  \big)\cdot  \bigg( P^A(t_{i+1} - t_i) - I \rbrace \cdot u_{t_i} + \int_{t_i}^{t_{i+1}} P^A(t_{i+1} - s)f(u_s) ds\bigg) =\\ - D^{(2)}\Theta\big( \tilde{\hat{u}}_i  \big)\cdot (\hat{u}_{t_{i+1}} - \hat{u}_{t_i}) \cdot (\hat{\hat{u}}_i - \hat{u}_{t_i})  + O \big( (t_{i+1} - t_i)^2 \big).
\end{multline}

Now, making use of the expression in \eqref{eq: deterministic flow semi group}, since $\sup_{t \in [0,T]}\| P^A(t) \| < \infty$, and $\norm{f(u_s)}$ is uniformly bounded,
\begin{align*}
\norm{\hat{u}_t - \hat{u}_{t_i}} \leq &\norm{P^A(t - t_i)u_{t_i} - u_{t_i}} + O( t - t_i) \\
=& o\big( (t_{i+1}- t_i)^{1/2}\big),
\end{align*}
using Assumption \ref{eq: uniform continuity P A}. Similarly $ (\hat{\hat{u}}_i - \hat{u}_{t_i}) =  o\big( (t_{i+1}- t_i)^{1/2}\big)$. We thus find that
\[
(t_{i+1} - t_i)^{-1} D^{(2)}\Theta\big( \tilde{\hat{u}}_i  \big)\cdot (\hat{u}_{t_{i+1}} - \hat{u}_{t_i}) \cdot (\hat{\hat{u}}_i - \hat{u}_{t_i})  \to 0,
\]
as the partition goes to zero.  This means that \eqref{eq: to show first derivative zero} must hold, as required.

It remains to show that as $\Delta \to 0$, for any $1\leq p \leq m$,
\begin{equation}\label{eq: to show final Theta}
\bigg| \sum_{i=1}^M D^{(2)}\Theta^p(\tilde{u}_{i})\cdot ( u_{t_{i+1}} - u_{t_i}) \cdot ( u_{t_{i+1}} - u_{t_i}) -\epsilon^2 \int_0^{T \wedge \xi_n}  \sum_{j=1}^\infty \big\lbrace D^{(2)}\Theta^p (u_s) \cdot B(u_s)e_j \cdot B(u_s)e_j \big\rbrace ds \bigg| \to 0 .
\end{equation}
Now, substituting the decomposition of $u_t$ in \eqref{eq: in Theta proof u t i decomposition}, and writing
\begin{equation}\label{eq: in Theta proof u t i decomposition}
Z_i = \lbrace P^A(t_{i+1} - t_i) - I \rbrace \cdot u_{t_i} + \int_{t_i}^{t_{i+1}} P^A(t_{i+1} - s)f(u_s) ds,  
\end{equation}
we obtain that
\begin{multline}
\sum_{i=1}^M D^{(2)}\Theta^p(\tilde{u}_{i})\cdot ( u_{t_{i+1}} - u_{t_i}) \cdot ( u_{t_{i+1}} - u_{t_i}) = \sum_{i=1}^M\bigg\lbrace D^{(2)}\Theta^p(\tilde{u}_{i})\cdot Z_i \cdot Z_i \\+ 2\epsilon D^{(2)}\Theta^p(\tilde{u}_{i})\cdot Z_i \cdot \int_{t_i}^{t_{i+1}} P^A(t_{i+1} - s) B(u_s) dW_s \\ + \epsilon^2 D^{(2)}\Theta^p(\tilde{u}_{i})\cdot  \int_{t_i}^{t_{i+1}} P^A(t_{i+1} - s) B(u_s) dW_s\cdot \int_{t_i}^{t_{i+1}} P^A(t_{i+1} - s) B(u_s) dW_s \bigg\rbrace.
\end{multline}
One easily shows that, as $\Delta \to 0$, and making use of Assumption \ref{eq: uniform continuity P A},
\[
 \sum_{i=1}^M\bigg\lbrace D^{(2)}\Theta^p(\tilde{u}_{i})\cdot Z_i \cdot Z_i \\+ 2\epsilon D^{(2)}\Theta^p(\tilde{u}_{i})\cdot Z_i \cdot \int_{t_i}^{t_{i+1}} P^A(t_{i+1} - s) B(u_s) dW_s\bigg\rbrace \to 0.
\]
Let $\lbrace w^j_t \rbrace_{j\geq 1}$ be independent Brownian motions. We can represent $W_t$ as $\sum_{j=1}^\infty w^j_t e_j$, where $\lbrace e_j \rbrace_{j\geq 1}$ is the orthonormal basis for $H$ of Assumption \ref{Assumption Theta}, noting that the summation does not converge in $H$ (but only in an appropriate ambient Hilbert space \cite{DaPrato2014}). We fix an integer $K > 0$ and find that
\begin{multline}
 \int_{t_i \wedge \xi_n}^{t_{i+1}\wedge \xi_n}  P^A(t_{i+1} - s)  B(u_s)\cdot dW_s  = \sum_{j=K+1}^\infty\int_{t_i \wedge \xi_n}^{t_{i+1}\wedge \xi_n} P^A(t_{i+1} - s)   B(u_s) e_j dw^j_s + X_i \text{ where }\nonumber
\end{multline}
$ X_i = \sum_{j=1}^K\int_{t_i \wedge \xi_n}^{t_{i+1}\wedge \xi_n}   P^A(t_{i+1} - s) B(u_s) e_j dw^j_s$. It thus follows from Assumption \ref{Assumption Theta} that for each $n \in \mathbb{Z}^+$ there must exist $K_n$ such that for all $K \geq K_n$,
\begin{multline}
 \mathbb{E} \bigg[\bigg| D^{(2)}\Theta^p(\tilde{u}_{i})\cdot  \int_{t_i}^{t_{i+1}} P^A(t_{i+1} - s) B(u_s) dW_s\cdot \int_{t_i}^{t_{i+1}} P^A(t_{i+1} - s) B(u_s) dW_s\\-  D^{(2)}\Theta^p(\tilde{u}_{i})\cdot  \int_{t_i}^{t_{i+1}} P^A(t_{i+1} - s) B(u_s) dW_s\cdot \int_{t_i}^{t_{i+1}} P^A(t_{i+1} - s) B(u_s) dW_s \bigg|^2 \bigg] \leq n^{-1} (t_{i+1}-t_i).
\end{multline}
However the continuity of $D^{(2)}\Theta_p$ implies that
\begin{multline*}
 \bigg| \sum_{i=1}^M D^{(2)}\Theta^p(\tilde{u}_{i})\cdot Z_i \cdot Z_i - \epsilon^2\int_0^{T\wedge\xi_n} \sum_{j=1}^{K_n} \big( D^{(2)}\Theta^p (u_t) \cdot B(u_t)e_j \cdot B(u_t)e_j \big) dt \bigg| \to 0,
\end{multline*}
as $\Delta \to 0$.
\end{proof}

\section{Long-Time Stability}
\label{Section Long Time Stability}

The main aim of this section is to show that the probability of the stochastic system leaving a close neighborhood of the manifold parameterized by $\lbrace \varphi_{\balpha} \rbrace_{\balpha \in  \mathbb{R}^m}$ after an exponentially long period of time, is exponentially unlikely. This result is necessary for the metastable results of the next section, and is also of independent interest. The main result of this section is Theorem \ref{Theorem exponential stability time}.

Define the `amplitude' of the solution (relative to the nearest shifted pattern / wave) to be
\begin{equation}
v_t = u_t - \varphi_{\bbeta_t},
\end{equation}
recalling the definition of the variational phase $\bbeta_t$ in \eqref{eq: G i definition}. Let $\bar{\kappa}$ be a positive constant such that $\bar{\kappa} \leq \frac{1}{2m} \sup_{1\leq i,j \leq m}\| \psi^i_{0,j}\|^{-1}$. We require an additional condition on $\bar{\kappa}$ further on, in \eqref{eq: bar kappa second}. Define the stopping time, for some $\kappa \leq \bar{\kappa} := \frac{1}{2m} \sup_{1\leq i,j \leq m}\| \psi^i_{0,j} \| ^{-1}$,
\begin{equation}
\eta = \inf \big\lbrace t\geq 0\; : \; \norm{v_t} = \kappa \big\rbrace .
\end{equation}
This constant has been chosen such that, if $\norm{v_t} \leq \bar{\kappa}$, then necessarily
\begin{equation}\label{eq: norm M lower bound}
 \sum_{i,j=1}^m \mathfrak{a}_i \mathfrak{a}_j\mathcal{M}_{ij}(u_t,\bbeta_t) - \frac{1}{2} \sum_{i=1}^m \mathfrak{a}_i^2 \geq 0,
\end{equation}
for all $\mathfrak{a} \in \mathbb{R}^m$. The above identity can be inferred from the definition in \eqref{eq: M definition}. This implies that (i) the matrix $\mathcal{M}(u_t,\bbeta_t)$ can be inverted if $t\leq \eta$, and (ii) the map $(u_t,\bbeta_t) \to \mathcal{N}(u_t,\bbeta_t)$ is locally Lipschitz.

The main result of this section is the following.
\begin{theorem}\label{Theorem exponential stability time}
Let $p \in [0,2)$. There exists a constant $C > 0$ (independent of the choice of $p$) and $\epsilon_{(p)} > 0$ such that for all $\epsilon \in (0, \epsilon_{(p)})$, and all $\kappa\in [\epsilon_{(p)}^p , \bar{\kappa}]$ and all $T > 0$,
\begin{equation}
\mathbb{P}\big( \sup_{t\in [0,T]} \norm{v_t} > \kappa \big) \leq T \exp\big(-C \epsilon^{-2} \kappa^2 \big)
\end{equation}
\end{theorem}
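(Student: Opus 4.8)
The plan is a discretized confinement argument in the large-deviations regime, patterned on the method of \cite{MacLaurin2020a}: partition $[0,\infty)$ into windows of length $T_0 = b^{-1}\log(4\mathfrak{c})$, so that by \eqref{eq: norm V bound} the non-neutral part of the linearized semigroup contracts by exactly a factor $\tfrac14$ over one window; on each window bound $\norm{v_t}$ by a contracted copy of its value at the left endpoint, plus a quadratically small drift, plus the supremum of a stochastic convolution; then close a bootstrap via a union bound over the $\lceil T/T_0\rceil$ windows.

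First I would work up to the stopping time $\eta$ of the statement, so that $\norm{v_t}\le\kappa\le\bar\kappa$ throughout $[0,\eta)$. By \eqref{eq: norm M lower bound} and the remark after it, $\mathcal{M}(u_t,\bbeta_t)$ is then invertible with $\mathcal{N}=\mathcal{M}^{-1}$ uniformly bounded and Lipschitz, Lemma \ref{Lemma Beta SDE} applies, and $v_t$ obeys the mild identity \eqref{eq: SDE v t} on each $[t_k,t_{k+1}]$, $t_k=kT_0$. Since $\langle v_s,\psi^i_{\bbeta_s}\rangle=0$, the inhomogeneous propagator $\tilde U(t_k,\cdot)$ applied to $v_{t_k}$ inherits the Gearhart--Pr\"uss bound \eqref{eq: norm V bound}, so $\norm{\tilde U(t_k,t_{k+1})v_{t_k}}\le\tfrac14\norm{v_{t_k}}$ while $\norm{\tilde U(t_k,t)v_{t_k}}\le\mathfrak{c}\norm{v_{t_k}}$ for $t$ inside the window; the $s$-dependence of $\mathcal{L}_{\bbeta_s}$ only perturbs the rate (as $s\mapsto\bbeta_s$ is Lipschitz and the projections are smooth in $\balpha$) and is absorbed by slightly enlarging $T_0$. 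By Assumption \ref{assump f}(iv)--(v) and the boundedness of $\mathcal{N}$, the integrand of the drift $\int_{t_k}^t\tilde U(s,t)\mathcal{K}_s\,ds$ is of order $\norm{v_s}^2+\epsilon^2$, hence of order $\kappa^2+\epsilon^2$ before $\eta$; the lower bound on $\kappa$ (which renders $\epsilon^2$ negligible compared to $\kappa$) together with taking $\bar\kappa$ small in \eqref{eq: bar kappa second} makes the drift contribution over one window at most a small fixed multiple $\delta_0\kappa$ of $\kappa$.

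The crux is a \emph{sub-Gaussian} bound with the sharp variance proxy $\epsilon^2C_{HS}$ on the stochastic convolution $M^{(k)}_t:=\int_{t_k}^t\tilde U(s,t)\tilde B(s,u_s,\bbeta_s)\,dW_s$: one wants $\mathbb{P}\big(\sup_{t\in[t_k,t_{k+1}]}\norm{M^{(k)}_t}>r\,\big|\,\mathcal{F}_{t_k}\big)\le 2\exp\!\big(-c\,r^2/C_{HS}\big)$ with $c$ an absolute constant, conditionally on $\mathcal{F}_{t_k}$ so that no independence across windows is needed for the union bound. I would obtain this by the factorization method of Da Prato--Zabczyk: insert the kernel $(t-s)^{\mathfrak{z}-1}$, so that $\sup_t\norm{M^{(k)}_t}$ is dominated by $\norm{Y^{(k)}}_{L^q([t_k,t_{k+1}],H)}$ for any $q>1/\mathfrak{z}$, where $Y^{(k)}_s=\int_{t_k}^s(s-r)^{-\mathfrak{z}}\tilde U(r,s)\tilde B(r,u_r,\bbeta_r)\,dW_r$; then Burkholder--Davis--Gundy in $H$ together with Assumption \ref{assump exponential moment} --- after reducing $\tilde U\tilde B$ to $U_{\balpha}B$ modulo the bounded finite-rank correction $\sum_j\varphi_{\balpha,j}\mathcal{Y}_j$ of \eqref{eq: tilde B definition}, which is exactly where the $(t-s)^{-2\mathfrak{z}}$-weighted Hilbert--Schmidt bound with $\mathfrak{z}\in(0,1/2)$ is used --- yields $\mathbb{E}\big[\norm{Y^{(k)}_s}^{2q}\big]\le(Cq)^q(\epsilon^2C_{HS})^q$. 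Feeding these Gaussian-type moments into the power series for $\mathbb{E}\big[\exp(\lambda\sup_t\norm{M^{(k)}_t}^2)\big]$ shows it is finite for $\lambda<c/(\epsilon^2C_{HS})$, and Markov's inequality delivers the stated tail.

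To close the bootstrap, on $\{t<\eta\}$ set $a_k=\norm{v_{t_k}}$ and $Z_k=\sup_{[t_k,t_{k+1}]}\norm{M^{(k)}_t}$; the estimates above give $a_{k+1}\le\tfrac14 a_k+\delta_0\kappa+\epsilon Z_k$ and $\sup_{[t_k,t_{k+1}]}\norm{v_t}\le\mathfrak{c}a_k+\delta_0\kappa+\mathfrak{c}\epsilon Z_k$. Iterating the first inequality from $a_0=0$ yields $a_k\le\tfrac{4}{3}\delta_0\kappa+\epsilon\sum_{j<k}4^{-(k-1-j)}Z_j\le\tfrac{4}{3}\delta_0\kappa+\tfrac{4}{3}\epsilon\max_{j<k}Z_j$, so for $\delta_0$ chosen small enough the event $\{\eta\le T\}$ forces $Z_j>c'\kappa/\epsilon$ for some $j\le\lceil T/T_0\rceil$. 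A union bound over these windows, applying the conditional sub-Gaussian tail at $r=c'\kappa/\epsilon$ and absorbing the polynomial prefactor into the exponent (legitimate since $\kappa$ is bounded below, so $\epsilon^{-2}\kappa^2\to\infty$), gives $\mathbb{P}(\eta\le T)\le T\exp(-C\epsilon^{-2}\kappa^2)$, which is the claim. The main obstacle is precisely this sub-Gaussian estimate at the correct scale $\epsilon^2C_{HS}$, \emph{uniformly} over the random, time-dependent coefficients $\tilde U(\cdot,\cdot)$ and $\tilde B(\cdot,u_\cdot,\bbeta_\cdot)$ --- that is, decoupling the cylindrical noise from the state dependence --- which is handled by stopping at $\eta$ (freezing the relevant moduli of continuity) and by the factorization-plus-BDG scheme; the inhomogeneity of $\tilde U$ and the reduction to the form in Assumption \ref{assump exponential moment} are the fiddly secondary points.
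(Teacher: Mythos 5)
Your architecture --- windows of fixed length $T_0 = b^{-1}\log(4\mathfrak{c})$, a quarter-contraction from \eqref{eq: norm V bound}, a quadratically small drift, a sub-Gaussian tail on the stochastic convolution, and a union bound over windows --- is the same as the paper's, and deriving the sub-Gaussian tail via Da Prato--Zabczyk factorization plus Burkholder--Davis--Gundy (rather than the paper's appeal to \cite[Theorem 1.3]{Brzezniak1999} together with an exponential-moment argument as in \cite{MacLaurin2020a}) is a legitimate alternative route; the exponent $\mathfrak{z}\in(0,\tfrac12)$ in Assumption \ref{assump exponential moment} is indeed set up for exactly that factorization argument.

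The genuine gap is in how you handle the time-dependence of the linearization. You propagate $v_{t_k}$ by the inhomogeneous propagator $\tilde U(t_k,\cdot)$ and assert that it inherits the decay \eqref{eq: norm V bound}, with the perturbation ``absorbed by slightly enlarging $T_0$'' because ``$s\mapsto\bbeta_s$ is Lipschitz.'' But $\bbeta_s$ solves the SDE \eqref{eq: beta SDE definition 2}, so its paths are not Lipschitz, and $\sup_{s\in[t_k,t_{k+1}]}\norm{\bbeta_s-\bbeta_{t_k}}$ is a random quantity that is small only with high probability; one cannot deterministically absorb it by enlarging $T_0$. This is exactly where the paper diverges from your sketch: it freezes the generator at $\bbeta_a=\bbeta_{t_a}$ and writes the mild form \eqref{eq: SDE v t} using the \emph{fixed} semigroup $U_a(\cdot)=U_{\bbeta_a}(\cdot)$, so \eqref{eq: norm V bound} applies verbatim, and the commutator terms $f(u_s)-f(\varphi_{\bbeta_a})-Df(\varphi_{\bbeta_a})v_s$ (of size $O(\norm{\bbeta_s-\bbeta_a})$) move into the drift $\mathcal{H}_s$. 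That size is then bounded in Lemma \ref{eq: bound covariation terms} by the stochastic integral $\epsilon C_2\sup_{s}\big|\int_{t_a}^{s}\mathcal{Y}_i(r,u_r,\bbeta_r)\,dW_r\big|$, and this gives rise to a \emph{second} exceptional event $\mathcal{B}_a$ alongside the convolution event $\mathcal{C}_a$, each needing its own exponential tail. Your bootstrap has only one stochastic quantity $Z_k$ (the convolution), which plays a different role from the $\mathcal{Y}$-integral: the latter is what certifies the semigroup contraction, not the noise kick to $v_t$. Without a separate high-probability bound on $\sup_s\big|\int\mathcal{Y}_i\,dW\big|$ --- or, equivalently, a high-probability contraction estimate for $\tilde U(t_k,\cdot)$ on the moving complement of the adjoint kernel --- the contraction step of your recursion is not justified, and the argument does not close.
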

If one desires an optimal value for the constant $C$, one would need to perform a more detailed Large Deviations analysis (see for instance \cite{Salins2019}), and this would likely require extensive computation\footnote{In general, research on the existence of Large Deviation principles is vastly more developed than research on the efficient numerical computation of Large Deviations rate functions / first exit times.}. Even so, to the best of this author's knowledge, the bound in Theorem \ref{Theorem exponential stability time} is the most optimal one in the literature for this sort of problem.
 
We discretize time into intervals of length $\Delta t := b^{-1}\log(4 \mathfrak{c}^{-1})$, and we write $t_a := a \Delta t$. Write $u_a := u_{t_a}$ and $\bbeta_a := \bbeta_{t_a}$. Define the event
\begin{align}\label{eq: A a definition}
\mathcal{A}_a =  \big\lbrace  \norm{v_a} \leq \kappa / (2\mathfrak{c})  \big\rbrace \cap \big\lbrace \norm{v_{a+1}} > \kappa / (2\mathfrak{c}) \text{ or } \sup_{t\in [t_a, t_{a+1}]}\norm{v_t} > \kappa  \big\rbrace .
\end{align}
Write $R = \lfloor T / \Delta t \rfloor$. Noting that $\norm{v_0} = 0$, a union-of-events bound implies that
\begin{equation}\label{eq: union of events bound}
\mathbb{P}\big( \sup_{t\in [0, R\Delta t]} \norm{v_t} > \kappa \big) \leq \sum_{a=0}^R \mathbb{P}\big( \mathcal{A}_a \big).
\end{equation}
 It follows from Ito's Lemma that
\begin{align}
dv_t =& \big(Au_t + f(u_t)\big) dt +\epsilon B(t,u_t)dW_t -\sum_{i=1}^m \varphi_{\bbeta_t , i}d\bbeta^i_t - \frac{1}{2}\sum_{j,k=1}^m \varphi_{\bbeta_t,jk}d\beta^j_t d\beta^k_t \\
=& \big( \mathcal{L}_{\bbeta_a}v_t + f(u_t) - f(\varphi_{\bbeta_a})-Df(\varphi_{\bbeta_a})\cdot v_t\big) dt +\epsilon B(t,u_t)dW_t -\sum_{i=1}^m \varphi_{\bbeta_t , i}d\beta^i_t\nonumber \\  &- \frac{1}{2}\sum_{j,k=1}^m \varphi_{\bbeta_t,jk}d\beta^j_t d\beta^k_t ,
\end{align}
and we have substituted the identity $A\varphi_{\bbeta_a} + f(\varphi_{\bbeta_a}) = 0$. Write $U_{a}(t) := U_{\bbeta_a}(t)$ to be the semigroup generated by $\mathcal{L}_{\bbeta_a}$. Taking the mild solution, and substituting the expression for $d\beta^j_t d\beta^k_t$ in \eqref{eq: d beta j d beta k} , we have that
\begin{equation}\label{eq: SDE v t}
v_t = U_a(t-t_a)v_a + \int_{t_a}^t U_a(t-s)\mathcal{H}_s ds  
+ \epsilon \int_{t_a}^t U_a(t-s) \tilde{B}(s,u_s,\bbeta_s)dW_s,
\end{equation}
where
\begin{align}
\tilde{B}(s,z,\balpha) : \; & \mathbb{R}^+ \times H \times \mathbb{R}^m \to \mathcal{L}(H,H) \\
\tilde{B}(s,z,\balpha) =& B(s,z) - \sum_{j=1}^m \varphi_{\balpha , j} \mathcal{Y}_j(s,z,\balpha) \label{eq: tilde B definition} \\
\mathcal{H}_s :=& f(u_s) - f(\varphi_{\bbeta_a})-Df(\varphi_{\bbeta_a})\cdot v_s -\sum_{i=1}^m \varphi_{\bbeta_s,i}\mathcal{V}_i(s,u_s, \bbeta_s) \\ &- \frac{\epsilon^2}{2} \sum_{j,k,p,q=1}^m \varphi_{\bbeta_s,jk} \mathcal{N}_{jp}(u_s,\bbeta_s)\mathcal{N}_{kq}(u_s,\bbeta_s) \langle B^*(s,u_s) \psi_{\bbeta_s}^p , B^*(s,u_s) \psi^q_{\bbeta_s} \rangle .\nonumber
\end{align}

\begin{lemma}\label{eq: bound covariation terms}
There exist constants $C_1 , C_2, C_3$ such that for all $t \in [t_a, t_{a+1} \wedge \eta]$, where $t_a = \sup\lbrace t_b : t_b \leq t \rbrace$,
\begin{multline}
\int_{t_a}^t U_a(t-s)\mathcal{H}_s ds  \leq \epsilon^2 C_1 + C_3\sup_{s\in [t_a,t]}\norm{v_s}^2 \\
+ \epsilon C_2  \sup_{s\in [t_a, \eta \wedge t_{a+1}]} \sup_{1\leq i \leq m}  \bigg| \int_{t_a}^{s} \mathcal{Y}_i(r,u_r,\bbeta_r) dW_r \bigg| 
\end{multline}
\end{lemma}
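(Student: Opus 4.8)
The plan is to read the displayed left-hand side with the $H$-norm and to reduce the estimate, via uniform boundedness of the frozen semigroup, to a pointwise bound on $\|\mathcal{H}_s\|$ over $s\in[t_a,t]$, which is then obtained term by term. First I would note that $U_a(r)=U_{\bbeta_a}(r)=P_{\bbeta_a}+V_{\bbeta_a}(r)$ with $\|V_{\bbeta_a}(r)\|\le\mathfrak{c}$ by \eqref{eq: norm V bound}, while $\|P_{\bbeta_a}\|\le\sum_{i=1}^m\|\psi^i_{\bbeta_a}\|\,\|\varphi_{\bbeta_a,i}\|\le C$, all uniformly in $\bbeta_a$ because every $\varphi_{\bbeta_a},\psi^i_{\bbeta_a}$ and their derivatives are images of their values at $0$ under the isometry $\mathcal{T}_{\bbeta_a}$. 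Hence $\|U_a(r)\|\le C_U$ for all $r\ge 0$, so that for $t\in[t_a,t_{a+1}\wedge\eta]$ (where $t-t_a\le\Delta t$)
\[
\Big\|\int_{t_a}^t U_a(t-s)\mathcal{H}_s\,ds\Big\|\le C_U\,\Delta t\,\sup_{s\in[t_a,t]}\|\mathcal{H}_s\| ,
\]
and it remains to bound $\sup_{s\in[t_a,t]}\|\mathcal{H}_s\|$ by the right-hand side of the asserted inequality (absorbing the remaining constants into $C_U\Delta t$).

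The second ingredient is a pair of a priori bounds valid on $[t_a,\eta\wedge t_{a+1}]$: that $\|\mathcal{V}(r,u_r,\bbeta_r)\|\le C(\|v_r\|^2+\epsilon^2)$, and consequently, from \eqref{eq: beta SDE definition 2},
\[
\|\bbeta_s-\bbeta_a\|\le C\sup_{r\in[t_a,s]}\|v_r\|^2+C\epsilon^2+C\epsilon\,\sup_{1\le i\le m}\sup_{r\in[t_a,s]}\Big|\int_{t_a}^r\mathcal{Y}_i(q,u_q,\bbeta_q)\,dW_q\Big| .
\]
The first bound comes from inspecting \eqref{eq: V ut beta t}: the two $\epsilon^2$-prefactored groups are $O(\epsilon^2)$ because, for $\|v_r\|\le\bar\kappa$, \eqref{eq: norm M lower bound} makes $\mathcal{N}(u_r,\bbeta_r)$ bounded, $B$ and the $\psi$'s and their derivatives are bounded, and $\langle u_r,\psi^i_{\bbeta_r,jk}\rangle=\langle v_r,\psi^i_{\bbeta_r,jk}\rangle-\langle\varphi_{\bbeta_r,j},\psi^i_{\bbeta_r,k}\rangle$ is bounded; the remaining group $\langle f(u_r)-f(\varphi_{\bbeta_r})-Df(\varphi_{\bbeta_r})v_r,\psi^i_{\bbeta_r}\rangle$ is $O(\|v_r\|^2)$ by the uniform bound on $D^{(2)}f$ in Assumption \ref{assump f}(v). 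The bound on $\|\bbeta_s-\bbeta_a\|$ then follows at once, using $\Delta t\le C$.

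With these in hand I would split $\mathcal{H}_s$ into its three pieces and bound each. (i) For the Taylor-remainder piece --- which, since the mild solution for $v_t$ freezes the linearization at $\bbeta_a$ (substituting $A\varphi_{\bbeta_s}+f(\varphi_{\bbeta_s})=0$), is $f(u_s)-f(\varphi_{\bbeta_s})-Df(\varphi_{\bbeta_a})v_s$ --- I would write it as $[f(u_s)-f(\varphi_{\bbeta_s})-Df(\varphi_{\bbeta_s})v_s]+[Df(\varphi_{\bbeta_s})-Df(\varphi_{\bbeta_a})]v_s$; the first bracket is $O(\|v_s\|^2)$ by Assumption \ref{assump f}(v), and the second has norm $\le C\|\bbeta_s-\bbeta_a\|\,\|v_s\|\le C\bar\kappa\|\bbeta_s-\bbeta_a\|$ (using that $Df$ and $\balpha\mapsto\varphi_{\balpha}$ are globally Lipschitz, from boundedness of $D^{(2)}f$ and of $\varphi_{\balpha,i}$, together with $\|v_s\|\le\bar\kappa$), which after inserting the displayed bound on $\|\bbeta_s-\bbeta_a\|$ is of the three desired forms. (ii) The piece $\sum_{i=1}^m\varphi_{\bbeta_s,i}\mathcal{V}_i(s,u_s,\bbeta_s)$ is $\le C(\|v_s\|^2+\epsilon^2)$ by the bound on $\mathcal{V}$ above and boundedness of $\|\varphi_{\bbeta_s,i}\|$. (iii) The last piece $\tfrac{\epsilon^2}{2}\sum_{j,k,p,q}\varphi_{\bbeta_s,jk}\mathcal{N}_{jp}\mathcal{N}_{kq}\langle B^*\psi^p_{\bbeta_s},B^*\psi^q_{\bbeta_s}\rangle$ is $O(\epsilon^2)$ since all factors are bounded. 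Summing, taking $\sup_{s\in[t_a,t]}$, and enlarging the range of the stochastic-integral supremum to $[t_a,\eta\wedge t_{a+1}]$ yields the claim, with $C_1=C_3=C_2$ equal to $C_U\Delta t$ times the combined constant.

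I expect the only delicate point to be item (i): one must ensure that the mismatch created by freezing the linearization at the left endpoint $\bbeta_a$ enters $\mathcal{H}_s$ only \emph{linearly} in $\bbeta_s-\bbeta_a$. Were it to enter quadratically, the martingale part of $\bbeta_s-\bbeta_a$ would contribute a term like $\epsilon^2\big(\sup_i|\int\mathcal{Y}_i\,dW|\big)^2$, which cannot be absorbed pathwise into the right-hand side of the lemma (whose only stochastic term is linear in $|\int\mathcal{Y}_i\,dW|$); subtracting $Df(\varphi_{\bbeta_s})v_s$, not $Df(\varphi_{\bbeta_a})v_s$, before invoking the quadratic Taylor estimate is precisely what keeps the mismatch linear. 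Everything else is routine, the one recurring point of care being uniformity of all constants in $\bbeta_a$, which is supplied throughout by the isometry invariance.
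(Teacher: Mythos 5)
Your proof is correct and follows essentially the same route as the paper's: a uniform bound $\|U_a(\cdot)\|_{\mathcal{L}}\le C_U$ reduces the claim to a pointwise bound on $\|\mathcal{H}_s\|$; the Taylor-remainder piece is split into an $O(\|v_s\|^2)$ term plus a Lipschitz mismatch of order $\|\bbeta_s-\bbeta_a\|$; and $\|\bbeta_s-\bbeta_a\|$ is controlled from the phase SDE \eqref{eq: beta SDE definition 2} using $|\mathcal{V}|=O(\epsilon^2+\|v\|^2)$. One remark: your form $f(u_s)-f(\varphi_{\bbeta_s})-Df(\varphi_{\bbeta_a})v_s$ for the first piece of $\mathcal{H}_s$ is in fact the arithmetically correct one --- since $v_s=u_s-\varphi_{\bbeta_s}$ the substitution must use $A\varphi_{\bbeta_s}+f(\varphi_{\bbeta_s})=0$ --- whereas the paper displays $f(\varphi_{\bbeta_a})$ there, an apparent typo; the estimate goes through identically in either case because the mismatch stays linear in $\bbeta_s-\bbeta_a$, exactly as you observe.
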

\begin{proof}
Using the definition of the semigroup in \eqref{eq: V definition},
\begin{align*}
\norm{U_{a}(t-s)}_{\mathcal{L}} &\leq \norm{V_{\bbeta_a}(t-s)}_{\mathcal{L}} + \norm{P_{\bbeta_a}}_{\mathcal{L}} \\
& \leq \mathfrak{c} + \sum_{i=1}^m \norm{\varphi_{\bbeta_a,i}}\| \psi^i_{\bbeta_a}\| 
=  \mathfrak{c} + \sum_{i=1}^m \norm{\varphi_{0,i}}\| \psi^i_{0}\| 
\end{align*}
using \eqref{eq: norm V bound}, and the fact that $P_{\bbeta_a}$ is an $m$-dimensional projection. Let $\bar{C}$ be a constant such that
\begin{align*}
\sup_{t\geq 0, x\in E,\balpha \in \mathbb{R}^m \; : \norm{x-\varphi_{\balpha} } \leq \bar{\kappa} , \mathcal{G}(x,\balpha) = 0}\sup_{1\leq i \leq m} \big|\mathcal{V}_i(t,x,\balpha) \big| &\leq \bar{C} \\
\sup_{t\geq 0,x\in E,\balpha \in \mathbb{R}^m \; : \norm{x-\varphi_{\balpha} } \leq \bar{\kappa} , \mathcal{G}(x,\balpha) = 0}\sup_{1\leq i \leq m} \norm{\mathcal{Y}_i(t,x,\balpha)}_{HS} &\leq \bar{C} .
\end{align*}
The constant $\bar{C}$ exists, because as noted in \eqref{eq: norm M lower bound}, the choice of $\bar{\kappa}$ ensures that the lowest eigenvalue of $\mathcal{M}(x,\balpha)$ is greater than or equal to a half.

Employing the triangle inequality,
\begin{multline*}
\norm{f(u_s) - f(\varphi_{\bbeta_a})-Df(\varphi_{\bbeta_a})\cdot v_s} \leq \norm{f(u_s) - f(\varphi_{\bbeta_s})-Df(\varphi_{\bbeta_s})\cdot v_s}\\
+\norm{Df(\varphi_{\bbeta_a})\cdot v_s - Df(\varphi_{\bbeta_s})\cdot v_s + f(\varphi_{\bbeta_s})-f(\varphi_{\bbeta_a})}
\end{multline*}
Now using the second order Taylor expansion, there exists $\lambda_a \in [0,1]$ such that, writing $\bar{u}_a = \lambda_a \varphi_{\bbeta_a} + (1-\lambda_a) u_a$,
\begin{equation}
f(u_a) - f(\varphi_{\bbeta_a})-Df(\varphi_{\bbeta_a})\cdot v_a = D^{(2)} f( \bar{u}_a)\cdot v_a \cdot v_a .
\end{equation}
The assumed boundedness of the second derivative thus implies that for some constant $C > 0$,
\begin{equation}\label{eq: second taylor f}
\norm{f(u_a) - f(\varphi_{\bbeta_a})-Df(\varphi_{\bbeta_a})v_a } \leq C\norm{v_a}^2.
\end{equation}
The boundedness of the first and second Frechet derivatives of $f$ implies that there exists a constant $C$ such that 
\begin{align}
\norm{Df(\varphi_{\bbeta_a})\cdot v_s - Df(\varphi_{\bbeta_s})\cdot v_s + f(\varphi_{\bbeta_s})-f(\varphi_{\bbeta_a})} \leq &C\big\lbrace 1+ \sup_{s\in [t_a , \eta \wedge t_{a+1}]} \norm{v_s} \big\rbrace \nonumber \\  &\times \sup_{s\in [t_a, \eta \wedge t_{a+1}]} \norm{\bbeta_s - \bbeta_a} \\
\leq &C(1+\bar{\kappa}) \sup_{s\in [t_a, \eta \wedge t_{a+1}]} \norm{\bbeta_s - \bbeta_a} ,
\end{align}
since by definition of $\eta$, $ \sup_{s\in [t_a, \eta \wedge t_{a+1}]} \norm{v_s} \leq \bar{\kappa}$. Using our SDE for $\bbeta_t$ in \eqref{eq: beta SDE definition 2}, we find that as long as $ \sup_{s\in [t_a, t_{a+1}]} \norm{v_s} \leq \bar{\kappa}$, there exists a constant $\bar{C}$ such that
\begin{multline}
\sup_{s\in [t_a, t_{a+1}], 1\leq i \leq m} \big|\bbeta^i_{s} - \bbeta^i_a \big|  \leq \Delta t \bar{C} \epsilon^2 +\epsilon \sup_{s\in [t_a, \eta \wedge t_{a+1}]} \sup_{1\leq i \leq m}  \bigg| \int_{t_a}^{s} \mathcal{Y}_i(r,u_r,\bbeta_r) dW_r \bigg| \\
+ \Delta t C \norm{\psi^i_{0}}\norm{v_t}^2
\end{multline}
and we have used the Cauchy-Schwarz Inequality to find that
\begin{align*}
\big| \langle f(u_t) - f(\varphi_{\bbeta_t}) -Df(\varphi_{\bbeta_t})\cdot (u_t - \varphi_{\bbeta_t}), \psi^i_{\bbeta_t} \rangle \big| &\leq \norm{ f(u_t) - f(\varphi_{\bbeta_t}) -Df(\varphi_{\bbeta_t})(u_t - \varphi_{\bbeta_t})}  \| \psi^i_{\bbeta_t} \| \\
&\leq  C\norm{\psi^i_{0}}\norm{v_t}^2,
\end{align*}
thanks to \eqref{eq: second taylor f}, and since $  \| \psi^i_{\bbeta_t} \| = \| \psi^i_{0} \|$.
\end{proof}
Now we insist that $\epsilon_{(p)}$ (defined in the statement of the theorem) is such that
\begin{equation}\label{eq: epsilon p definition}
0 < \epsilon_{(p)}^2 C_1 \leq \epsilon^p_{(p)}/(16\mathfrak{c}),
\end{equation}
which is always possible since by assumption $p < 2$. We also insist that $\bar{\kappa}$ is such that
\begin{equation}\label{eq: bar kappa second}
C_3\bar{\kappa}^2 \leq \bar{\kappa} / (16\mathfrak{c}),
\end{equation}
and since $\kappa \leq \bar{\kappa}$, is must be that $C_3\kappa^2 \leq \frac{\kappa}{16\mathfrak{c}}$.

\begin{lemma}
\begin{align}
\mathcal{A}_a \subseteq &   \big\lbrace  \norm{v_a} \leq \kappa / (2\mathfrak{c})  \big\rbrace \cap \lbrace \mathcal{B}_a \cup \mathcal{C}_a \rbrace\text{ where } \\
\mathcal{B}_a =&  \bigg\lbrace\epsilon C_2  \sup_{s\in [t_a, \eta \wedge t_{a+1}]} \sup_{1\leq i \leq m}  \bigg| \int_{t_a}^{s \wedge \eta} \mathcal{Y}_i(r,u_r,\bbeta_r) dW_r \bigg| \geq \frac{\kappa}{16\mathfrak{c}} \bigg\rbrace  \\
\mathcal{C}_a =& \bigg\lbrace \epsilon \sup_{t\in t_{a+1}\wedge \eta} \norm{ \int_{t_a}^{t \wedge \eta} U_a(t-s) \tilde{B}(s,u_s,\bbeta_s)dW_s } \geq \frac{\kappa}{16\mathfrak{c}} \bigg\rbrace .
\end{align}
\end{lemma}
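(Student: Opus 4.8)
The target inclusion splits into two parts: $\mathcal{A}_a\subseteq\{\norm{v_a}\le\kappa/(2\mathfrak{c})\}$, which is immediate from the definition \eqref{eq: A a definition}, and $\mathcal{A}_a\subseteq\mathcal{B}_a\cup\mathcal{C}_a$, which is the real content. The plan is to prove the second inclusion by contradiction: assume $\norm{v_a}\le\kappa/(2\mathfrak{c})$ and $\omega\notin\mathcal{B}_a\cup\mathcal{C}_a$, and deduce $\omega\notin\mathcal{A}_a$, i.e. that $\sup_{t\in[t_a,t_{a+1}]}\norm{v_t}\le\kappa$ and $\norm{v_{a+1}}\le\kappa/(2\mathfrak{c})$. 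The whole estimate is carried out on the mild representation \eqref{eq: SDE v t}, $v_t=U_a(t-t_a)v_a+\int_{t_a}^tU_a(t-s)\mathcal{H}_s\,ds+\epsilon\int_{t_a}^tU_a(t-s)\tilde B(s,u_s,\bbeta_s)\,dW_s$.

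First I would exploit the variational constraint. Because $\bbeta_a$ is the variational phase, $\langle v_a,\psi^i_{\bbeta_a}\rangle=\mathcal{G}_i(u_a,\bbeta_a)=0$ for all $i$, so $P_{\bbeta_a}v_a=0$ and hence $U_a(t-t_a)v_a=\bigl(P_{\bbeta_a}+V_{\bbeta_a}(t-t_a)\bigr)v_a=V_{\bbeta_a}(t-t_a)v_a$. By the decay bound \eqref{eq: norm V bound} this gives $\norm{U_a(t-t_a)v_a}\le\mathfrak{c}e^{-b(t-t_a)}\norm{v_a}$, which is at most $\mathfrak{c}\norm{v_a}\le\kappa/2$ for every $t\in[t_a,t_{a+1}]$, and --- using that the grid spacing $\Delta t=b^{-1}\log(4\mathfrak{c})$ was chosen precisely so that $\mathfrak{c}e^{-b\Delta t}=\tfrac14$ --- at most $\tfrac14\norm{v_a}\le\kappa/(8\mathfrak{c})$ at the endpoint $t=t_{a+1}$.

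Next, insert the bound of Lemma \ref{eq: bound covariation terms} for the drift integral, so that for $t$ in the relevant range $[t_a,t_{a+1}\wedge\eta]$ one has $\norm{v_t}\le\mathfrak{c}e^{-b(t-t_a)}\norm{v_a}+\epsilon^2C_1+C_3\sup_{s\in[t_a,t]}\norm{v_s}^2+\epsilon C_2\sup_{s}\sup_i\bigl|\int_{t_a}^{s\wedge\eta}\mathcal{Y}_i\,dW_r\bigr|+\epsilon\norm{\int_{t_a}^tU_a(t-s)\tilde B\,dW_s}$. On $\mathcal{B}_a^c\cap\mathcal{C}_a^c$ the two stochastic error terms are each $<\kappa/(16\mathfrak{c})$ (for $t\le\eta\wedge t_{a+1}$ the stopped integral appearing in $\mathcal{C}_a$ coincides with the one above), while $\epsilon^2C_1\le\kappa/(16\mathfrak{c})$ by \eqref{eq: epsilon p definition} together with $\kappa\ge\epsilon_{(p)}^p$. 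For the quadratic term one runs the standard first-exit/bootstrap argument: on $[t_a,t_{a+1}\wedge\eta]$ we have $\norm{v_t}\le\kappa\le\bar{\kappa}$ by definition of $\eta$, so $C_3\sup_{s\in[t_a,t]}\norm{v_s}^2\le C_3\kappa^2\le\kappa/(16\mathfrak{c})$ by \eqref{eq: bar kappa second}. Summing the five contributions gives $\norm{v_t}\le\kappa/2+\kappa/(4\mathfrak{c})\le 3\kappa/4<\kappa$ on $[t_a,t_{a+1}\wedge\eta]$; since $\norm{v}<\kappa$ there, the exit time cannot equal $\eta$ inside the interval, so in fact $\eta>t_{a+1}$, $\norm{v_t}\le 3\kappa/4$ on all of $[t_a,t_{a+1}]$, and at the endpoint the sharper leading-term bound yields $\norm{v_{a+1}}\le\kappa/(8\mathfrak{c})+\kappa/(4\mathfrak{c})=3\kappa/(8\mathfrak{c})<\kappa/(2\mathfrak{c})$. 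Both alternatives defining $\mathcal{A}_a$ therefore fail, which is the desired contradiction.

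I expect the bootstrap to be the only delicate point: the quadratic term $C_3\sup_s\norm{v_s}^2$ is controlled precisely by the bound on $\norm{v_t}$ that we are trying to prove, so one must propagate the estimate up to the stopping time $\eta\wedge t_{a+1}$ and verify that the constants close with margin to spare (here $\tfrac12+\tfrac1{4\mathfrak{c}}<1$ and $\tfrac1{8\mathfrak{c}}+\tfrac1{4\mathfrak{c}}<\tfrac1{2\mathfrak{c}}$, using $\mathfrak{c}\ge1$). One also has to match the bookkeeping carefully: each supremum that appears in the raw mild-solution estimate must be exactly one of the quantities whose smallness is postulated in $\mathcal{B}_a$ or $\mathcal{C}_a$, which is what dictates the truncations at $\eta$ in their definitions. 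Finally, throughout we are implicitly working on $\{t_{a+1}<\tau\}$ so that $\bbeta$, and hence $v$, is well defined on the interval --- this is part of the standing setup of Section \ref{Section Long Time Stability}.
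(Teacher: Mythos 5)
Your proof is correct and follows essentially the same route as the paper: apply the mild-solution formula \eqref{eq: SDE v t} on $[t_a,t_{a+1}\wedge\eta]$, bound the drift/covariation piece with Lemma \ref{eq: bound covariation terms}, split the leading semigroup term into an endpoint bound $\kappa/(8\mathfrak{c})$ and a uniform bound $\kappa/2$, and close the argument on the complement of $\mathcal{B}_a\cup\mathcal{C}_a$. Two of your details are actually slight improvements on the paper's exposition: you explicitly note that the variational constraint $\mathcal{G}_i(u_a,\bbeta_a)=0$ forces $P_{\bbeta_a}v_a=0$, so that $U_a(t-t_a)v_a=V_{\bbeta_a}(t-t_a)v_a$ and the decay estimate \eqref{eq: norm V bound} applies to the leading term (the paper invokes \eqref{eq: norm V bound} on $U_a$ without flagging this); and you correctly distinguish the endpoint estimate $\mathfrak{c}e^{-b\Delta t}\norm{v_a}\le\kappa/(8\mathfrak{c})$ from the uniform $\mathfrak{c}\norm{v_a}\le\kappa/2$ over the whole block, where the paper's phrasing ``since $t-t_a\le b^{-1}\log(4\mathfrak{c})$'' has the inequality pointing the wrong way for the $\kappa/(8\mathfrak{c})$ conclusion.
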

\begin{proof}
Using the expression for $v_t$ in \eqref{eq: SDE v t} and the triangle inequality, for all $t\in [t_a , \eta \wedge t_{a+1}]$,
\begin{align}
\norm{v_t} \leq & \norm{U_a(t-t_a)v_a} + \norm{\int_{t_a}^t U_a(t-s)\mathcal{H}_s ds  }
+ \epsilon \norm{\int_{t_a}^t U_a(t-s) \tilde{B}(s,u_s,\bbeta_s)dW_s} \nonumber \\
\leq & \mathfrak{c}\exp(-b (t-t_a)) \norm{v_a}+  \epsilon^2 C_1 + \epsilon C_2  \sup_{s\in [t_a, \eta \wedge t_{a+1}]} \sup_{1\leq i \leq m}  \bigg| \int_{t_a}^{s} \mathcal{Y}_i(r,u_r,\bbeta_r) dW_r \bigg|\nonumber \\
&+C_3 \kappa^2 + \epsilon \norm{\int_{t_a}^t U_a(t-s) \tilde{B}(s,u_s,\bbeta_s)dW_s}
\end{align}
after substituting the semigroup bound in \eqref{eq: norm V bound} and the bound in \eqref{eq: bound covariation terms}. Since $t-t_a \leq  b^{-1}\log(4 \mathfrak{c})$ and $\norm{v_a} \leq \kappa / (2\mathfrak{c})$, $\mathfrak{c}\exp(-b(t-t_a))\norm{v_a} \leq \kappa / (8\mathfrak{c})$. Also, $C_3\kappa^2 \leq \kappa / (16\mathfrak{c})$. Furthermore, since by definition $\kappa > \epsilon_{(p)}^p$, the inequality in \eqref{eq: epsilon p definition} implies that $\epsilon^2 C_1 \leq \kappa / (16\mathfrak{c})$. We thus find that 
\begin{align*}
\norm{v_{a+1}} \leq \frac{\kappa}{4\mathfrak{c}} +  \epsilon C_2  \sup_{s\in [t_a, \eta \wedge t_{a+1}]} \sup_{1\leq i \leq m}  \bigg| \int_{t_a}^{s} \mathcal{Y}_i(r,u_r,\bbeta_r) dW_r \bigg|\nonumber \\
+ \epsilon \norm{\int_{t_a}^{t_{a+1}} U_a( \Delta t) \tilde{B}(s,u_s,\bbeta_s)dW_s}.
\end{align*}
Thus if $\norm{v_{a+1}} > \frac{\kappa}{2\mathfrak{c}}$, then it must be that either $\mathcal{B}_a$ or $\mathcal{C}_a$ must hold. 

The remaining event in the definition of $\mathcal{A}_a$ is $ \sup_{t\in [t_a, t_{a+1}]}\norm{v_t} > \kappa $. Notice that, since $ \norm{U_a(t-t_a)v_a} \leq \mathfrak{c}\norm{v_a} \leq \mathfrak{c}\times \kappa / (2\mathfrak{c})$,
\begin{align*}
\sup_{t\in [t_a, t_{a+1}\wedge \eta]}\norm{v_{t}} \leq  \kappa / 2 + \epsilon^2 C_1 +  \epsilon C_2  \sup_{s\in [t_a, \eta \wedge t_{a+1}]} \sup_{1\leq i \leq m}  \bigg| \int_{t_a}^{s} \mathcal{Y}_i(r,u_r,\bbeta_r) dW_r \bigg|\nonumber \\
+C_3 \kappa^2+ \epsilon \sup_{t\in [t_a, t_{a+1} ]} \norm{\int_{t_a}^{t\wedge \eta } U_a(t-s) \tilde{B}(s,u_s,\bbeta_s)dW_s} \\
\leq \kappa / 2 + \kappa / 8 +  \epsilon C_2  \sup_{s\in [t_a, \eta \wedge t_{a+1}]} \sup_{1\leq i \leq m}  \bigg| \int_{t_a}^{s} \mathcal{Y}_i(r,u_r,\bbeta_r) dW_r \bigg|\nonumber \\
+ \epsilon \sup_{t\in [t_a, t_{a+1} ]}\norm{\int_{t_a}^{t\wedge \eta} U_a(t-s) \tilde{B}(s,u_s,\bbeta_s)dW_s},
\end{align*}
since $\epsilon^2 C_1 \leq \kappa / (16\mathfrak{c})$, $C_3\kappa^2 \leq \kappa / (16\mathfrak{c})$ and $\mathfrak{c} \geq 1$. We again see that if $\sup_{t\in [t_a, t_{a+1}\wedge \eta]}\norm{v_{t}} \geq \kappa$, then $\mathcal{B}_a$ or $\mathcal{C}_a$ must hold.
\end{proof}
It thus follows from \eqref{eq: union of events bound} that
\begin{equation}
\mathbb{P}\big( \sup_{t\in [0,T]}\norm{v_t} \geq \kappa \big) \leq \sum_{a=0}^{R} \big\lbrace \mathbb{P}( \mathcal{B}_a) + \mathbb{P}( \mathcal{C}_a) \big\rbrace.
\end{equation}
Recalling that $R = \lfloor T / \Delta t\rfloor$, it then follows from Lemma \ref{lemma exponential bounds} that for a constant $C_4 > 0$,
\[
\mathbb{P}\big( \sup_{t\in [0,T]}\norm{v_t} \geq \kappa \big) \leq TC_4 \exp\big( - C\epsilon^{-2} \kappa^2 \big).
\]
For small enough $\epsilon$, this implies Theorem \ref{Theorem exponential stability time}.
\begin{lemma}\label{lemma exponential bounds}
There exists a constant $C > 0$ such that
\begin{align}
\sup_{a\geq 0}\mathbb{P}\big( \mathcal{B}_a \big) \leq \exp\big( - C\epsilon^{-2} \kappa^2 \big) \\
\sup_{a\geq 0}\mathbb{P}\big( \mathcal{C}_a \big) \leq \exp\big( - C\epsilon^{-2} \kappa^2 \big) \label{eq: bound C a}.
\end{align}
\end{lemma}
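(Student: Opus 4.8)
The plan is to treat $\mathcal{B}_a$ and $\mathcal{C}_a$ separately. In both cases the key structural point is that on the event $\{t\le\eta\}$ one has $\|v_t\|\le\bar\kappa$, so that \eqref{eq: norm M lower bound} forces the lowest eigenvalue of $\mathcal{M}(u_t,\bbeta_t)$ to be at least $\tfrac12$; consequently $\mathcal{N}=\mathcal{M}^{-1}$ is uniformly bounded and, by \eqref{eq: Y definition} and the assumptions on $B$, the integrands $\mathcal{Y}_i(r,u_r,\bbeta_r)$ and $\tilde B(r,u_r,\bbeta_r)$ appearing inside the stochastic integrals have uniformly bounded Hilbert--Schmidt norms, with constants independent of $a$ by the isometry invariance of $\{\varphi_{\balpha}\},\{\psi^i_{\balpha}\}$. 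All estimates will therefore be uniform in $a$.

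\emph{The bound on $\mathbb{P}(\mathcal{B}_a)$.} For each fixed $i$, the process $s\mapsto M^i_s:=\int_{t_a}^{s\wedge\eta}\mathcal{Y}_i(r,u_r,\bbeta_r)\,dW_r$ is a continuous real martingale whose quadratic variation satisfies $\langle M^i\rangle_s\le\bar C^2\Delta t$, where $\bar C$ is the constant of Lemma \ref{eq: bound covariation terms}. I would then invoke the exponential supermartingale $\exp\!\big(\lambda M^i_s-\tfrac12\lambda^2\langle M^i\rangle_s\big)$ together with Doob's maximal inequality (equivalently, a Dambis--Dubins--Schwarz time change to a Brownian motion and the reflection principle) to get, for all $x>0$,
\[
\mathbb{P}\Big(\sup_{s\in[t_a,t_{a+1}]}|M^i_s|\ge x\Big)\le 2\exp\!\Big(-\frac{x^2}{2\bar C^2\Delta t}\Big).
\]
Setting $x=\kappa/(16\mathfrak{c}\epsilon C_2)$ and taking a union bound over $1\le i\le m$ then yields $\mathbb{P}(\mathcal{B}_a)\le 2m\exp(-C\epsilon^{-2}\kappa^2)$.

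\emph{The bound on $\mathbb{P}(\mathcal{C}_a)$.} Here I would use the factorization method. Recall from the proof of Lemma \ref{eq: bound covariation terms} that $\|U_a(t)\|_{\mathcal{L}}\le\mathfrak{c}+\sum_{i=1}^m\|\varphi_{0,i}\|\,\|\psi^i_0\|$ is uniformly bounded, and that $\mathfrak{z}\in(0,1/2)$. For $t\in[t_a,t_{a+1}\wedge\eta]$ one has the stochastic-Fubini identity
\[
\int_{t_a}^{t}U_a(t-s)\tilde B(s,u_s,\bbeta_s)\,dW_s=\frac{\sin(\pi\mathfrak{z})}{\pi}\int_{t_a}^t (t-s)^{\mathfrak{z}-1}\,U_a(t-s)\,Y^a_s\,ds,
\]
\[
Y^a_s:=\int_{t_a}^s (s-r)^{-\mathfrak{z}}\,U_a(s-r)\,\tilde B(r,u_r,\bbeta_r)\,dW_r .
\]
Since $(t-s)^{\mathfrak{z}-1}$ is integrable on the fixed-length interval, Young's and H\"older's inequalities (choosing $p$ with $1/p<\mathfrak{z}$ and then interpolating $L^p$ against $L^2$) bound $\sup_{t\in[t_a,t_{a+1}\wedge\eta]}\big\|\int_{t_a}^{t}U_a(t-s)\tilde B\,dW_s\big\|$ by $C\big(\int_{t_a}^{t_{a+1}\wedge\eta}\|Y^a_s\|^2\,ds\big)^{1/2}$ for a deterministic $C$. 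By \eqref{eq: tilde B definition} the integrand of $Y^a_s$ has Hilbert--Schmidt norm controlled by $(s-r)^{-\mathfrak{z}}\|U_a(s-r)B(r,u_r)\|_{HS}$ plus a finite-rank correction of the same order, so, because the discretisation length $\Delta t$ equals the window $T_0$, Assumption \ref{assump exponential moment} and \eqref{eq: HS upperbound} give $\int_{t_a}^{s}\|\,\cdot\,\|_{HS}^2\,dr\le C_{HS}'$ uniformly. Standard $2k$-th moment estimates for $H$-valued stochastic integrals then give $\mathbb{E}\|Y^a_s\|^{2k}\le (c\,k\,C_{HS}')^k$, and summing the exponential series yields $\sup_s\mathbb{E}\big[\exp(\lambda\|Y^a_s\|^2)\big]\le K<\infty$ for $\lambda$ small enough, uniformly in $a$ and $\epsilon$. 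Finally, Jensen's inequality applied to the normalised measure $ds/\Delta t$ followed by Chebyshev's exponential inequality gives, for $\lambda'\Delta t\le\lambda$,
\[
\mathbb{P}\Big(\int_{t_a}^{t_{a+1}\wedge\eta}\|Y^a_s\|^2\,ds\ge y\Big)\le e^{-\lambda' y}\,\frac{1}{\Delta t}\int_{t_a}^{t_{a+1}}\mathbb{E}\big[e^{\lambda'\Delta t\|Y^a_s\|^2}\big]\,ds\le K\,e^{-\lambda' y}.
\]
Since $\mathcal{C}_a\subseteq\{\int_{t_a}^{t_{a+1}\wedge\eta}\|Y^a_s\|^2\,ds\ge c_0\,\kappa^2\epsilon^{-2}\}$ for a constant $c_0>0$, this is precisely $\mathbb{P}(\mathcal{C}_a)\le K\exp(-C\epsilon^{-2}\kappa^2)$.

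\emph{Main obstacle.} The routine part is $\mathcal{B}_a$; the delicate part is making the factorization argument for $\mathcal{C}_a$ rigorous with the \emph{random, adapted} integrand $\tilde B(s,u_s,\bbeta_s)$ stopped at $\eta$ --- in particular justifying the stochastic Fubini step in this setting, controlling the finite-rank correction term in $\tilde B$ so that it does not spoil the Hilbert--Schmidt estimate of Assumption \ref{assump exponential moment}, and, most importantly, obtaining the exponential moment bound for $Y^a_s$ with constants uniform in both $a$ and $\epsilon$.
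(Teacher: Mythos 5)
Your overall strategy mirrors the paper's: for $\mathcal{B}_a$ an exponential martingale estimate for a real-valued martingale with uniformly bounded quadratic variation (the paper cites \cite[Section 5]{MacLaurin2020a} for this step), and for $\mathcal{C}_a$ an exponential tail bound for the $H$-valued stochastic convolution driven by $\tilde B$, for which the paper first splits $\tilde B=B-\sum_{j}\varphi_{\bbeta_s,j}\mathcal{Y}_j$ and then invokes \cite[Theorem 1.3]{Brzezniak1999} together with Assumption \ref{assump exponential moment} for the $B$-part and again cites \cite[Section 5]{MacLaurin2020a} for the finite-rank correction. Handling $\tilde B$ as a whole, as you do, is only cosmetically different. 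Your $\mathcal{B}_a$ argument is correct, and you correctly identify that the uniform lower bound on the eigenvalues of $\mathcal{M}$ on $\{t\le\eta\}$ is exactly what gives a uniform bound on $\|\mathcal{Y}_i\|_{HS}$ and hence on $\langle M^i\rangle$; the Dambis--Dubins--Schwarz reduction to a Brownian tail then gives the stated Gaussian estimate.

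There is, however, a genuine gap in your $\mathcal{C}_a$ argument. After factorizing, you assert that Young's and H\"older's inequalities, together with ``interpolating $L^p$ against $L^2$,'' reduce $\sup_t\big\|\int_{t_a}^t U_a(t-s)\tilde B\,dW_s\big\|$ to a constant times $\big(\int\|Y^a_s\|^2\,ds\big)^{1/2}$. This step fails: Assumption \ref{assump exponential moment} imposes $\mathfrak{z}\in(0,1/2)$, so $(t-s)^{\mathfrak{z}-1}$ is \emph{not} square-integrable near $s=t$, and H\"older's inequality forces you to estimate $\big(\int\|Y^a_s\|^{q}\,ds\big)^{1/q}$ for some $q>1/\mathfrak{z}>2$. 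On a bounded interval one cannot pass from the $L^{q}$-norm of $\|Y^a_\cdot\|$ to its $L^{2}$-norm without an $L^\infty$ bound that you do not have; Jensen gives $\|Y^a_\cdot\|_{L^2}\le (\Delta t)^{1/2-1/q}\|Y^a_\cdot\|_{L^q}$, which is the wrong direction. The plan is salvageable --- this is precisely the technical point that the cited exponential tail estimate for stochastic convolutions resolves, typically by observing that $\|Y^a_\cdot\|_{L^{q}}$, as a supremum of linear functionals of a conditionally Gaussian process, itself has a sub-Gaussian tail --- but as written, your exponential Chebyshev estimate for $\int\|Y^a_s\|^2\,ds$ controls a different (smaller) quantity than the one H\"older actually produces, and the argument does not close.
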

\begin{proof}
We prove the first result only. The bound of \eqref{eq: bound C a} can be obtained by taking an exponential moment of the stochastic integral: see \cite[Section 5]{MacLaurin2020a}. 
Using the definition of $\tilde{B}$ in \eqref{eq: tilde B definition},
\begin{multline}
\mathbb{P}\big( \sup_{t\in [t_a, t_{a+1} ]}\norm{\int_{t_a}^{t\wedge \eta} U_a(t-s) \tilde{B}(s,u_s,\bbeta_s)dW_s} \geq \frac{\kappa}{16\epsilon\mathfrak{c}}\big) \leq \\ \mathbb{P}\big(  \sup_{t\in [t_a, t_{a+1} ]}\norm{\int_{t_a}^{t\wedge \eta} U_a(t-s) B(s,u_s)dW_s} \geq \frac{\kappa}{32\epsilon\mathfrak{c}}\big)\\
+\mathbb{P}\big(  \sup_{t\in [t_a, t_{a+1} ]}\norm{\sum_{i=1}^m\int_{t_a}^{t\wedge \eta} U_a(t-s)\varphi_{\bbeta_s,i} \mathcal{Y}_i(s,u_s,\bbeta_s)dW_s} \geq \frac{\kappa}{32\epsilon\mathfrak{c}}\big).
\end{multline}
The bound of the second term can be obtained by taking an exponential moment: see \cite[Section 5]{MacLaurin2020a}. It follows from \cite[Theorem 1.3]{Brzezniak1999} that there exists a constant $\tilde{C}$ such that
\begin{equation}
 \mathbb{P}\big(  \sup_{t\in [t_a, t_{a+1} ]}\norm{\int_{t_a}^{t\wedge \eta} U_a(t-s) B(s,u_s)dW_s} \geq \frac{\kappa}{32\epsilon\mathfrak{c}}\big) \leq \exp\bigg( -\tilde{C}\bigg\lbrace \frac{\kappa}{32\epsilon\mathfrak{c}}\bigg\rbrace^2 \bigg).
\end{equation}
We must also make use of Assumption \ref{assump exponential moment}.
%
\end{proof}

\section{Long-Time Ergodicity of the Induced Phase Dynamics} \label{Section Ergodicity}

In this section we search for an approximate expression for the average occupation times of the phase as it wanders across the manifold $\lbrace \varphi_{\balpha} \rbrace_{\balpha\in\mathbb{R}^m}$ over very long periods of time. It is assumed throughout this section that the manifold is periodic; more precisely, that $\varphi_{\balpha}$, $\psi^i_{\balpha}$ and the first and second derivatives are $2\pi$-periodic. Write $\mathcal{S} = (\mathbb{S}^1)^m$ - where $\mathbb{S}^1$ is the ring $[-\pi , \pi]$, with the points $-\pi$ and $\pi$ identified. This means that $\lbrace \varphi_{\balpha} \rbrace_{\balpha \in \mathcal{S}}$ and $\lbrace \psi_{\balpha} \rbrace_{\balpha \in \mathcal{S}}$ constitute manifolds that are everywhere twice-continuously differentiable. Since $\mathcal{S}$ is compact, over long periods of time one expects the phase (taken modulo $\mathcal{S}$) to continually return to any particular region (as long as the noise coefficients are not degenerate). We can thus use our phase SDE to determine an approximate expression for the expected proportion of time that the phase spends in the neighborhood of any particular subset of the manifold over long periods of time. We can also determine the average shift in the phase over very long periods of time: in effect, we reach an understanding of the average rotation induced in the phase through the interaction of the manifold geometry and correlation structure of the noise. 

In earlier work we demonstrated that the wandering of a `neural bump' induced by noise and a weak non-noisy external stimulus approaches a Von Mises distribution \cite{MacLaurin2020a}. In that work, the weak stimulus dominated the $O(\epsilon^2)$ covariation terms. By contrast, in this work there is no weak stimulus, and the long-time distribution of the phase is determined by the balance between the $O(\epsilon^2)$ covariation terms in the drift, and the $O(\epsilon)$ stochastic noise.

We assume throughout this section that $B$ is independent of $t$, so that we write $B(u_t)$\footnote{In the case of spiral waves in compact domains, even if one is working in a rotating reference frame, if the driving white noise $W_t$ is cylindrical, then one does not need to co-rotate $B$. }. We also require Assumptions \ref{eq: uniform continuity P A} and \ref{Assumption Theta} in order that the isochronal phase SDE in \eqref{eq: existence gamma} is well-defined. Recall that $\lbrace e_j \rbrace_{j\geq 1}$ is the orthonormal basis for H of Assumption \ref{Assumption Theta}. Define, for $\balpha \in \mathcal{S}$, functions $\tilde{\mathcal{V}}: \mathbb{R}^m \to \mathbb{R}^m$ and  $\tilde{\mathcal{Y}}: \mathbb{R}^m \to \mathcal{L}(H , \mathbb{R}^m)$ that are (respectively) leading order approximations of $D^{(2)}\Theta(\varphi_{\balpha})$ and $D^{(1)}\Theta(\varphi_{\balpha})$. Writing $\tilde{\mathcal{V}} = (\tilde{\mathcal{V}}_i)_{1\leq i \leq m}$ and $\tilde{\mathcal{Y}} = (\tilde{\mathcal{Y}}_i)_{1\leq i \leq m}$, with $\tilde{\mathcal{Y}}_i: \mathbb{R}^m \to \mathcal{L}(H,\mathbb{R})$,
\begin{align}
\tilde{\mathcal{V}}_i(\balpha) = & \sum_{j=1}^m \langle B^*(\varphi_{\balpha})\psi^j_{\balpha} , B^*(\varphi_{\balpha}) \psi^i_{\balpha,j} \rangle
- \frac{1}{2} \sum_{j,k=1}^m \langle \varphi_{\balpha,j} , \psi^i_{\balpha,k} \rangle \langle B^*(\varphi_{\balpha}) \psi_{\balpha}^j , B^*(\varphi_{\balpha}) \psi^k_{\balpha} \rangle \nonumber \\
&+ \Gamma_{QV}(\balpha) \text{ where }\\
\Gamma_{QV}(\balpha) = & \frac{1}{2}\sum_{i=1}^m\sum_{j=1}^\infty \int_0^\infty \big\langle \psi^i_{\balpha} , D^{(2)} f(\varphi_{\balpha}) \cdot \big( V_{\balpha}(s)B(\varphi_{\balpha}) e_j \big) \cdot \big( V_{\balpha}(s) B(\varphi_{\balpha}) e_j \big) \big\rangle ds \\
\tilde{\mathcal{Y}}_i(\balpha)\cdot z = &\langle B(\varphi_{\balpha}) z,\psi^i_{\balpha} \rangle \text{ and define }\\
\mathcal{H}_{jk}(\balpha) = & \langle B^*(\varphi_{\balpha}) \psi_{\balpha}^j , B^*(\varphi_{\balpha}) \psi^k_{\balpha} \rangle .
\end{align}
It follows from Assumption \ref{Assumption Theta}, and the fact that $\norm{V_{\balpha}(t)} \leq \mathfrak{c} \exp(-bt)$ that $\Gamma_{QV}(\balpha)$ is well-defined for all $\balpha \in \mathbb{R}^m$. For any $\boldsymbol\xi \in \mathcal{S}$, define $p_t(\balpha | \boldsymbol\xi)$ to satisfy the Fokker-Planck PDE, for $\balpha \in \mathcal{S}$,
\begin{align}
\frac{\partial }{\partial t}p_t(\balpha | \boldsymbol\xi) = - \sum_{j=1}^m \frac{\partial}{\partial \alpha^j} \big\lbrace p_t(\balpha  | \boldsymbol\xi) \tilde{\mathcal{V}}_j(\balpha) \big\rbrace + \frac{1}{2}\sum_{j,k=1}^m \frac{\partial^2}{\partial \alpha^j \partial \alpha^k} \big\lbrace  \mathcal{H}_{jk}(\balpha)p_t(\balpha  | \boldsymbol\xi ) \big\rbrace , \label{eq: p alpha xi}
\end{align}
such that $\lim_{t\to 0}p_t(\balpha | \xi) = \delta_{\xi}(\balpha)$, and with periodic boundary conditions. The solution of this PDE defines a transition probability density for an $\mathcal{S}$-valued stochastic process started at $\boldsymbol\xi$. We assume that 
\begin{equation}
\inf_{\balpha \in \mathcal{S}} \det\big( \mathcal{H}(\balpha) \big) > 0,
\end{equation}
and since $\tilde{\mathcal{V}}$ and $\mathcal{H}$ are continuously differentiable, this means that the stochastic process with Fokker-Plank equation given by \eqref{eq: p alpha xi} has a unique invariant density $p_*(\balpha)$. That is, $p_*(\balpha)$ is the unique solution of
\begin{equation}
 - \sum_{j=1}^m \frac{\partial}{\partial \alpha^j} \big\lbrace p_*(\balpha  ) \tilde{\mathcal{V}}_j(\balpha) \big\rbrace + \frac{1}{2}\sum_{j,k=1}^m \frac{\partial^2}{\partial \alpha^j \partial \alpha^k} \big\lbrace  \mathcal{H}_{jk}(\balpha)p_*(\balpha ) \big\rbrace = 0,
\end{equation}
and such that $\int_{\mathcal{S}} p_*(\balpha) d\balpha = 1$. Let $P_*$ be the probability measure on $\mathcal{S}$ with density $p_*$. Standard theory \cite{Hairer2011} dictates that
\begin{equation}\label{eq: convergence pt p star}
\lim_{t \to \infty}\sup_{\boldsymbol\xi , \balpha \in \mathcal{S}} \big| p_t(\balpha | \boldsymbol\xi) - p_*(\balpha) \big|   = 0.
\end{equation}
The main result of this section is the following. Its implication is that the wandering across the manifold $\mathcal{S}$ of the phase SDE, over long periods of time, is indicated by the density $p_*$, with very high probability. For any $\balpha \in \mathbb{R}^m$, write $\balpha \mod \mathcal{S}$ to be the unique member $\boldsymbol\eta \in \mathcal{S}$ such that $\eta^i = \alpha^i + 2\pi p_i$, $\eta \in (-\pi , \pi]$ for integers $p_i$. Let $\mathcal{C}_{b,\mathcal{S}}^2(\mathbb{R}^m)$ be the set of all periodic twice continuously differentiable functions $g$ on $\mathbb{R}^m$, such that
\begin{align}
g(\balpha) = g(\balpha \mod \mathcal{S}) \\
|g| \leq 1 \; \; , \; \; \big|\frac{\partial g}{\partial \alpha^j} \big| \leq 1 \; \; , \; \;  \big|\frac{\partial g}{\partial \alpha^j \partial \alpha^k} \big| \leq 1 
\end{align}
for all $1\leq j,k \leq m$, and $\balpha \mapsto \frac{\partial g}{\partial \alpha^j \partial \alpha^k}(\balpha)$ and $\balpha \mapsto \frac{\partial g}{\partial \alpha^j}(\balpha) $ are continuous on $\mathcal{S}$. This assumption means that, thanks to Taylor's Theorem, for any $\balpha , \bbeta \in \mathcal{S}$,
\begin{equation}\label{eq: g lipschitz}
\big| g(\balpha) - g(\bbeta) \big| \leq m \norm{\balpha - \bbeta}.
\end{equation}

The significance of the first result in the following theorem is that the distribution of the stochastic phase modulo $\mathcal{S}$ over very long periods of time converges to $P_*$ as $\epsilon \to 0$. The second result determines the average asymptotic shift in the phase over very long periods of time. It parallels analogous results for the long-time average phase-shift of stochastic oscillators \cite{Giacomin2018,MacLaurin2020b}.
\begin{theorem}\label{first ergodic theorem}
For any $\delta > 0$ there exist constants $C_{\delta}  > 0$ and $\epsilon_{\delta} > 0$ such that for all $\epsilon < \epsilon_{\delta}$ and  all $g\in \mathcal{C}^2_{b,\mathcal{S}}(\mathbb{R}^m)$, writing $T_{\delta} = \exp\big( C_{\delta} \epsilon^{-2} \big)$,
\begin{align}\label{eq: first result last theorem}
\mathbb{P}\bigg( \bigg| T_{\delta}^{-1}\int_0^{T_{\delta}} g(\bgamma_s) ds-\mathbb{E}^{P_*}[g] \bigg| > \delta \bigg) &\leq \exp\big( - C_{\delta}\epsilon^{-2} \big).
\end{align}
Also,
\begin{equation}\label{eq: second result last theorem}
\mathbb{P}\big( \sup_{1\leq i \leq m}\big| \epsilon^{-2} T_{\delta}^{-1} \gamma^i_{T_{\delta}}-\mathbb{E}^{P_*}[ \tilde{\mathcal{V}}_i] \big| > \delta \big) \leq \exp\big( - C_{\delta}\epsilon^{-2}  \big) 
\end{equation}
\end{theorem}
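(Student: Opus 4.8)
The plan is to reduce the statement to an ergodic theorem for an auxiliary uniformly elliptic operator on the compact manifold $\mathcal{S}$, using the long-time stability estimate of Theorem~\ref{Theorem exponential stability time} to confine the dynamics to a $\kappa$-neighbourhood of $\lbrace\varphi_{\balpha}\rbrace$. Fix $\kappa\in(0,\bar\kappa]$ small, small enough that $\norm{v_t}\leq\kappa$ forces $u_t\in\mathcal{R}$, and let $\mathcal{E}=\lbrace\sup_{t\leq T_\delta}\norm{v_t}\leq\kappa\rbrace$. By Theorem~\ref{Theorem exponential stability time}, $\mathbb{P}(\mathcal{E}^c)\leq T_\delta\exp(-C\epsilon^{-2}\kappa^2)=\exp\big((C_\delta-C\kappa^2)\epsilon^{-2}\big)$, which is at most $\exp(-C_\delta\epsilon^{-2})$ once $C_\delta\leq C\kappa^2/2$. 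On $\mathcal{E}$ one has $\tau_i>T_\delta$ and, by Lemma~\ref{Lemma gamma SDE}, $\bgamma_t=\Theta(u_t)$ for $t\leq T_\delta$; moreover $\norm{u_t-\varphi_{\bgamma_t}}\leq 2\kappa$ by \eqref{eq: to establish error Theta Phi}, and combining \eqref{eq: existence gamma} with the approximations \eqref{eq: Theta approximation 1}--\eqref{eq: Theta approximation 2} of Lemma~\ref{Lemma Theta Derivative Approximation} shows that $\bgamma_t$ solves an $\mathbb{R}^m$-valued SDE whose drift equals $\epsilon^2\tilde{\mathcal{V}}(\bgamma_t)$ up to an $O(\epsilon^2\kappa)$ error, whose covariation rate equals $\epsilon^2\mathcal{H}(\bgamma_t)$ up to $O(\epsilon^2\kappa)$, and whose martingale part $\epsilon\int_0^t D\Theta(u_s)B(u_s)\,dW_s$ has integrand with Hilbert--Schmidt norm bounded by a constant $\bar{C}$ (using Assumption~\ref{Assumption Theta} together with \eqref{eq: Theta approximation 1}).

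The ergodic input is as follows. Since $\mathcal{S}$ is compact, $\inf_{\mathcal{S}}\det\mathcal{H}>0$, and $\tilde{\mathcal{V}},\mathcal{H}$ are continuously differentiable with $P_*$ the unique invariant density of $\mathcal{G}:=\sum_j\tilde{\mathcal{V}}_j\partial_j+\tfrac12\sum_{j,k}\mathcal{H}_{jk}\partial_j\partial_k$, for any $g\in\mathcal{C}^2_{b,\mathcal{S}}(\mathbb{R}^m)$ the centred Poisson equation $\mathcal{G}h=g-\mathbb{E}^{P_*}[g]$ admits a periodic solution $h\in C^2$ with $\norm{h}_\infty,\norm{\nabla h}_\infty,\norm{D^2h}_\infty\leq C_h$, the solvability condition holding by the choice of centring. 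Applying It\^o's formula to $h(\bgamma_t)$ on $\mathcal{E}$ and using $\nabla h\cdot\tilde{\mathcal{V}}+\tfrac12 D^2h:\mathcal{H}=\mathcal{G}h=g-\mathbb{E}^{P_*}[g]$ together with the $O(\epsilon^2\kappa)$ control of the coefficients gives
\[
T_\delta^{-1}\int_0^{T_\delta}\big(g(\bgamma_s)-\mathbb{E}^{P_*}[g]\big)\,ds=\epsilon^{-2}T_\delta^{-1}\big(h(\bgamma_{T_\delta})-h(\bgamma_0)\big)+\mathcal{R}_\kappa-\epsilon^{-2}T_\delta^{-1}M_{T_\delta},
\]
where $|\mathcal{R}_\kappa|\leq C'\kappa$ and $M_t=\epsilon\int_0^t\nabla h(\bgamma_s)\cdot D\Theta(u_s)B(u_s)\,dW_s$ is a continuous martingale with $\langle M\rangle_{T_\delta}\leq\epsilon^2 C_h^2\bar{C}^2\,T_\delta$ on $\mathcal{E}$.

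It then remains to bound the three terms. The boundary term is at most $2C_h\epsilon^{-2}\exp(-C_\delta\epsilon^{-2})<\delta/3$ for $\epsilon<\epsilon_\delta$; $|\mathcal{R}_\kappa|<\delta/3$ once $\kappa$ is fixed small (this fixes $\kappa$, hence $C_\delta\leq C\kappa^2/2$). For the martingale term, stopping at the deterministic level of $\langle M\rangle$ and using the Gaussian tail bound $\mathbb{P}(|M_{T_\delta}|\geq a,\langle M\rangle_{T_\delta}\leq R)\leq 2\exp(-a^2/(2R))$ with $a=\epsilon^2 T_\delta\delta/3$ and $R=\epsilon^2 C_h^2\bar{C}^2 T_\delta$ yields $\mathbb{P}(\epsilon^{-2}T_\delta^{-1}|M_{T_\delta}|\geq\delta/3,\mathcal{E})\leq 2\exp\big(-c\delta^2\epsilon^2\exp(C_\delta\epsilon^{-2})\big)\leq\exp(-C_\delta\epsilon^{-2})$ for $\epsilon<\epsilon_\delta$, since $\epsilon^2\exp(C_\delta\epsilon^{-2})$ dominates $\epsilon^{-2}$. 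Adding $\mathbb{P}(\mathcal{E}^c)$ and relabelling $C_\delta$ gives \eqref{eq: first result last theorem}. For \eqref{eq: second result last theorem}, write $\gamma^i_{T_\delta}=\int_0^{T_\delta}(\text{drift})^i_s\,ds+\epsilon\int_0^{T_\delta}\big(D\Theta(u_s)B(u_s)\big)^i\,dW_s$; on $\mathcal{E}$ the drift is $\epsilon^2\tilde{\mathcal{V}}_i(\bgamma_s)+O(\epsilon^2\kappa)$, so $\epsilon^{-2}T_\delta^{-1}\gamma^i_{T_\delta}=T_\delta^{-1}\int_0^{T_\delta}\tilde{\mathcal{V}}_i(\bgamma_s)\,ds+O(\kappa)+\epsilon^{-2}T_\delta^{-1}(\text{martingale})$; the first term is controlled by \eqref{eq: first result last theorem} applied to a rescaling of $\tilde{\mathcal{V}}_i$ into $\mathcal{C}^2_{b,\mathcal{S}}(\mathbb{R}^m)$, the martingale is controlled exactly as above, and a union bound over $1\leq i\leq m$ closes the argument. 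The principal obstacle is establishing the existence of and uniform $C^2$ bounds on the Poisson solution $h$: this is where compactness of $\mathcal{S}$, uniform ellipticity $\inf\det\mathcal{H}>0$, the continuity of $\tilde{\mathcal{V}}$ (in particular of $\Gamma_{QV}$, via $\norm{V_{\balpha}(s)}\leq\mathfrak{c}e^{-bs}$ and the isometry structure $V_{\balpha}=\mathcal{T}_{\balpha}V_0\mathcal{T}_{\balpha}^{-1}$) and uniqueness of $P_*$ all enter, together with the bookkeeping of the nested constants ($\kappa$ depending on $\delta$, then $C_\delta$ on $\kappa$, then $\epsilon_\delta$ on both); note that a naive pathwise comparison of $\bgamma$ to the $\mathcal{G}$-diffusion over $[0,T_\delta]$ would fail because the $O(\kappa)$ coefficient error is amplified like $\exp(c\epsilon^2 T_\delta)$ by Gronwall's inequality, which is precisely why the Poisson-equation route is needed.
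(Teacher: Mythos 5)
Your proof is correct, but it follows a genuinely different route from the paper's. The paper discretizes $[0,T_\delta]$ into blocks of length $\Delta t = \epsilon^{-2}T$, introduces the $\epsilon$-free auxiliary process $\bupsilon^{\balpha}_t$ with generator $\mathcal{G}$, proves a Chebyshev-type mixing estimate for $\bupsilon$ over a fixed window $[0,T]$ (Lemma~\ref{Lemma T constant}), compares $\bgamma_t$ pathwise with the re-initialized ideal process $\tilde{\bgamma}_t$ within each block (Lemma~\ref{Lemma beta tilde beta}), and then combines the block contributions by a Chernoff argument on the product $\prod_a H_a$. The crucial device there is the re-initialization of $\tilde{\bgamma}$ at the start of each block: Gronwall is only invoked over a window of rescaled length $T$, not over $[0,T_\delta]$, so there is no exponential amplification. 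You instead use the Poisson-equation/corrector method: solve $\mathcal{G}h = g - \mathbb{E}^{P_*}[g]$ on the compact torus $\mathcal{S}$ (solvable because the centred right-hand side is orthogonal to $p_*$; $C^{2,\alpha}$-regular and with uniform $C^2$ bounds by Schauder estimates, uniform ellipticity $\inf_{\mathcal{S}}\det\mathcal{H}>0$ and the compactness of $\mathcal{S}$), apply It\^o's formula to $h(\bgamma_t)$, and read off a boundary term of size $O((\epsilon^2 T_\delta)^{-1})$, a remainder $O(\kappa)$ from the coefficient error in Lemma~\ref{Lemma Theta Derivative Approximation}, and a martingale with deterministically bounded bracket whose Gaussian tail is of order $\exp(-c\epsilon^2 T_\delta)\ll\exp(-C_\delta\epsilon^{-2})$. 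Your method requires more a priori regularity (the Poisson solution $h$ needs uniform $C^2$ bounds, hence $C^\alpha$ regularity of $\tilde{\mathcal{V}}$ and $\mathcal{H}$ that must be read off from the Lipschitz/isometry assumptions on $\varphi_{\balpha},\psi^i_{\balpha}$ and the decay $\|V_{\balpha}(s)\|\leq\mathfrak{c}e^{-bs}$), but in return it collapses the time-averaging problem into a single It\^o identity and avoids the block-by-block bookkeeping of the paper's proof; it also naturally packages the rate constant in terms of $\|h\|_{C^2}$, making the Gaussian tail estimate for the martingale essentially immediate. Your diagnosis that a single Gronwall comparison over $[0,T_\delta]$ would fail is accurate; the paper sidesteps this by re-initializing over blocks rather than by the corrector trick, but both escape the trap. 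Your handling of \eqref{eq: second result last theorem} mirrors the paper's (reduce the drift integral to \eqref{eq: first result last theorem} applied to a rescaling of $\tilde{\mathcal{V}}_i$, control the martingale by a Girsanov/Gaussian estimate, union bound over $i$), so the two proofs essentially coincide there. One small bookkeeping point worth making explicit: you should fix $\kappa$ small enough not only that $C'\kappa < \delta/3$ but also that $2\kappa$ is below the radius $\delta$ of $\mathcal{R}$ in Section~\ref{Section Isochronal}, so that $\hat{\tau}\leq\tau_i$ and the It\^o identity for $\bgamma_t=\Theta(u_t)$ holds on the whole of $\mathcal{E}$ — you gesture at this via \eqref{eq: to establish error Theta Phi} but it deserves a line.
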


\begin{rem}
Effectively, \eqref{eq: second result last theorem} implies that over long time scales the isochronal phase $\gamma^i_t$ changes at average rate $\epsilon^2 \mathbb{E}^{P_*}[ \tilde{\mathcal{V}}_i]$: that is, if $\mathbb{E}^{P_*}[ \tilde{\mathcal{V}}_i]$ is not equal to zero, there is a small average oscillation in the phase (taken modulo $\mathcal{S}$) induced by the noise correlation. In general, this oscillation may only be discernible over timescales of $O(\epsilon^{-2})$. Often, if the noise correlation structure satisfies certain symmetries with respect to the manifold, $\mathbb{E}^{P_*}[ \tilde{\mathcal{V}}_i]$ is identically zero. 
\end{rem}

\subsection{Proof of Theorem \ref{first ergodic theorem}}

To demonstrate the lemma, we discretize time into blocks of $\Delta t$ (to be specified more precisely further below, this definition of $\Delta t$ is different from the previous section). Write $t_a := a \Delta t$. To facilitate the proofs, we wish to define a stochastic process $\bupsilon_t^{\balpha}$ that does not depend on $\epsilon$: $\upsilon^{\balpha}_t$ will have the useful property that under the rescaling of time $t \to \epsilon^{-2} t$, its dynamics closely approximates $\bgamma_t$. In more detail, for any $\balpha \in \mathcal{S}$, define $\bupsilon_{t}^{\balpha}$ to be the solution to the stochastic process
\begin{equation}
d\bupsilon^{\balpha}_t =  \tilde{\mathcal{V}}(\bupsilon^{\balpha}_t)dt +  \tilde{\mathcal{Y}}(\bupsilon^{\balpha}_t)d\tilde{W}_t,
\end{equation}
with initial condition $\bupsilon^{\balpha}_0 = \balpha$. Here $\tilde{W}_t$ is any cylindrical Wiener process with identical probability law to $W_t$.

Define
\begin{equation}
\hat{\tau} = \inf\big\lbrace t \geq 0 \; : \norm{u_t - \varphi_{\bbeta_t}}= \zeta \big\rbrace ,
\end{equation}
for some $\zeta$ so be specified more precisely below. We assume that $\zeta \in (0 , \bar{\kappa}]$, where $\bar{\kappa}$ is the constant in Theorem \ref{Theorem exponential stability time}. As long as $\zeta$ is sufficiently small, and since  (as proved in \eqref{eq: to establish error Theta Phi}),
\begin{equation}
\norm{u_t - \varphi_{\bgamma_t}} = \norm{u_t - \varphi_{\bbeta_t}} + O\big( \norm{u_t - \varphi_{\bbeta_t}}^2\big)
\end{equation}
it must hold that
\begin{equation}
\hat{\tau}  \leq \tau_i,
\end{equation}
and the SDE for $\bgamma_t$ is well-defined for all $t\leq \tau$.
\begin{lemma}\label{Lemma T constant}
For any $\tilde{\epsilon}, \delta  > 0$, there exists $T > 0$ such that for all $t \geq T$,
\begin{equation}
\sup_{\balpha \in \mathcal{S}}\mathbb{P}\big( \big| t^{-1} \int_0^t  g\big( \bupsilon^{\balpha}_s\big) ds - \mathbb{E}^{P_*}[g] \big| \geq \delta / 2 \big) \leq \tilde{\epsilon}
\end{equation}
\end{lemma}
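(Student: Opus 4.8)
The plan is to establish an ergodic (law of large numbers) estimate for the time-homogeneous diffusion $\bupsilon^{\balpha}_t$ on the compact manifold $\mathcal{S}$ via a second-moment bound followed by Chebyshev's inequality, the decay of correlations being supplied by the uniform mixing estimate \eqref{eq: convergence pt p star}. First I would record the structural facts that make this work. Since $\tilde{\mathcal{V}}$ and $\tilde{\mathcal{Y}}$ are $2\pi$-periodic and built from the twice continuously differentiable (hence Lipschitz on $\mathcal{S}$) families $\varphi_{\balpha},\psi^i_{\balpha}$ and from $B$, the SDE $d\bupsilon^{\balpha}_t = \tilde{\mathcal{V}}(\bupsilon^{\balpha}_t)\,dt + \tilde{\mathcal{Y}}(\bupsilon^{\balpha}_t)\,d\tilde{W}_t$ is well-posed and projects to a time-homogeneous Markov process on $\mathcal{S}$; moreover, since $\tilde{\mathcal{Y}}_j(\balpha)^{*} = B^{*}(\varphi_{\balpha})\psi^j_{\balpha}$, one has $\big(\tilde{\mathcal{Y}}(\balpha)\tilde{\mathcal{Y}}(\balpha)^{*}\big)_{jk} = \langle B^{*}(\varphi_{\balpha})\psi^j_{\balpha}, B^{*}(\varphi_{\balpha})\psi^k_{\balpha}\rangle = \mathcal{H}_{jk}(\balpha)$, so the transition density of $\bupsilon^{\balpha}_t$ is precisely the Fokker--Planck kernel $p_t(\cdot\mid\cdot)$ of \eqref{eq: p alpha xi}. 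Finally $g$ being $\mathcal{S}$-periodic, $g(\bupsilon^{\balpha}_s)$ depends only on $\bupsilon^{\balpha}_s\bmod\mathcal{S}$, so everything descends to $\mathcal{S}$.

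Write $\bar g := \mathbb{E}^{P_*}[g]$ and $h(s) := g(\bupsilon^{\balpha}_s) - \bar g$, so that $|h(s)| \le 2$ by $|g|\le 1$. Define
\[
\rho(u) := \sup_{\boldsymbol\xi\in\mathcal{S}}\Big| \int_{\mathcal{S}} g(\boldsymbol\zeta)\big(p_u(\boldsymbol\zeta\mid\boldsymbol\xi) - p_*(\boldsymbol\zeta)\big)\,d\boldsymbol\zeta\Big| .
\]
Then $\rho(u)\le 2$ for all $u\ge 0$ (difference of two probability integrals of a function bounded by $1$); $u\mapsto\rho(u)$ is continuous on $(0,\infty)$ with a finite limit as $u\downarrow 0$, hence Riemann integrable on bounded intervals; and $\rho(u)\to 0$ as $u\to\infty$ by \eqref{eq: convergence pt p star} (sup-norm convergence of the densities on the compact $\mathcal{S}$ implies $L^1$ convergence). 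Consequently the Cesàro averages satisfy $\frac{1}{t}\int_0^t\rho(u)\,du \to 0$ as $t\to\infty$.

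Next I would estimate the second moment. By the Markov property of $\bupsilon^{\balpha}$, for $r\ge s$,
\[
\mathbb{E}\big[h(r)\,\big|\,(\bupsilon^{\balpha}_u)_{u\le s}\big] = \int_{\mathcal{S}} g(\boldsymbol\zeta)\big(p_{r-s}(\boldsymbol\zeta\mid\bupsilon^{\balpha}_s) - p_*(\boldsymbol\zeta)\big)\,d\boldsymbol\zeta ,
\]
whose modulus is at most $\rho(r-s)$; hence $|\mathbb{E}[h(s)h(r)]|\le \mathbb{E}\big[|h(s)|\,|\mathbb{E}[h(r)\mid(\bupsilon^{\balpha}_u)_{u\le s}]|\big]\le 2\rho(r-s)$. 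Feeding this into the double integral and changing variables gives
\[
\mathbb{E}\Big[\Big(t^{-1}\int_0^t h(s)\,ds\Big)^2\Big] = t^{-2}\int_0^t\!\!\int_0^t \mathbb{E}[h(s)h(r)]\,dr\,ds \le \frac{4}{t}\int_0^t \rho(u)\,du .
\]
Chebyshev's inequality then yields, uniformly over $\balpha\in\mathcal{S}$,
\[
\mathbb{P}\Big(\Big| t^{-1}\int_0^t g(\bupsilon^{\balpha}_s)\,ds - \mathbb{E}^{P_*}[g]\Big| \ge \delta/2\Big) \le \frac{16}{\delta^2 t}\int_0^t\rho(u)\,du ,
\]
and since the right-hand side is independent of $\balpha$ and tends to $0$, one chooses $T = T(\tilde{\epsilon},\delta)$ so that it is $\le\tilde{\epsilon}$ for all $t\ge T$, proving the lemma.

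The only step requiring genuine input, rather than routine bookkeeping, is that the convergence of $p_u(\cdot\mid\boldsymbol\xi)$ to $p_*$ be \emph{uniform in the initial point} $\boldsymbol\xi$; this is exactly what \eqref{eq: convergence pt p star} provides, resting in turn on the non-degeneracy $\inf_{\balpha\in\mathcal{S}}\det\mathcal{H}(\balpha)>0$ together with Hairer's ergodic theory for diffusions on the compact torus. Everything else is elementary; one should just take minimal care to confirm measurability and boundedness of $u\mapsto\rho(u)$ so that the iterated integrals are legitimate, as noted above.
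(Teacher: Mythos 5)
Your argument is essentially the same as the paper's: both apply Chebyshev to the time average, expand the second moment as a double time integral, and use the uniform-in-initial-condition convergence of $p_t$ to $p_*$ to show the correlation kernel decays, which makes the double integral $o(t^2)$. You package the decay as a single function $\rho$ and invoke a Cesàro argument, and you make explicit the Markov-property step that bounds $|\mathbb{E}[h(s)h(r)]|$ by $2\rho(r-s)$, whereas the paper splits the double integral into near- and far-diagonal regions directly; these are cosmetic differences, not a different route.
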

\begin{proof}
Write $\bar{g}(\theta) = g(\theta)- \mathbb{E}^{P_*}[g]$. By Chebyshev's Inequality,
\begin{align*}
\mathbb{P}\big( \big| t^{-1} \int_0^t  \bar{g}\big( \bupsilon^{\balpha}_s\big) ds \big| \geq \delta / 2 \big) \leq &
4\mathbb{E}\big[  \big|  \int_0^t  \bar{g}\big( \bupsilon^{\balpha}_s\big) ds \big|^2  \big] / (t\delta )^2 \\
=& \frac{4}{t^2\delta^2}  \int_0^t \int_0^t \mathbb{E}\big[ \bar{g}\big( \bupsilon^{\balpha}_r\big) \bar{g}\big( \bupsilon^{\balpha}_s\big)  \big] ds dr \\
\leq & \frac{4}{t^2\delta^2}  \int_0^t\int_0^t \chi\lbrace |r-s| \leq T_0 \rbrace \mathbb{E}\big[ \bar{g}\big( \bupsilon^{\balpha}_r\big) \bar{g}\big( \bupsilon^{\balpha}_s\big)  \big] ds dr  \\
&+ \frac{4}{t^2\delta^2}  \int_0^t \int_0^t \chi\lbrace |r-s| > T_0 \rbrace \mathbb{E}\big[ \bar{g}\big( \bupsilon^{\balpha}_r\big) \bar{g}\big( \bupsilon^{\balpha}_s\big)  \big] ds dr ,
\end{align*}
for any $T_0 > 0$. Thanks to the assumption in \eqref{eq: convergence pt p star}, we can choose $T_0$ to be such that
\begin{equation}
4 \delta^{-2}\sup_{0\leq s < t \; : |t-s| \geq T_0}\big\lbrace \mathbb{E}\big[ \bar{g}\big( \bupsilon^{\balpha}_t\big) \bar{g}\big( \bupsilon^{\balpha}_s\big)  \big] \big\rbrace \leq \tilde{\epsilon} / 2.
\end{equation}
Also
\begin{equation}
 \frac{4}{t^2\delta^2}  \int_0^t \int_0^t \chi\lbrace |r-s| \leq T_0 \rbrace \mathbb{E}\big[ \bar{g}\big( \bupsilon^{\balpha}_r\big) \bar{g}\big( \bupsilon^{\balpha}_s\big)  \big] ds dr = O\big(t^{-1}\big),
\end{equation}
since $|\bar{g}|$ is uniformly bounded. The previous three equations imply the lemma.

\end{proof}

For any $a \in \mathbb{Z}^+$, and assuming that $\tau > t_a$, define $\tilde{\bgamma}_t$ to satisfy the $\mathbb{R}^m$-valued SDE for all $t\in [t_a , t_{a+1})$,
\begin{equation} \label{eq: tilde beta t SDE}
d\tilde{\bgamma}_t = \epsilon^2 \tilde{\mathcal{V}}(\tilde{\bgamma}_t)dt + \epsilon \tilde{\mathcal{Y}}(\tilde{\bgamma}_t)dW_t,
\end{equation}
and with initial condition such that $\tilde{\bgamma}_{t_a} = \bgamma_{t_a}$. Notice that $\tilde{\bgamma}_t$ is driven by the same Brownian motion $W_t$ as $\bgamma_t$. One can easily check that a unique solution to the above SDE exists.  Furthermore $\bupsilon^{\bgamma_a}_{t}$ has the same probability law as $\tilde{\bgamma}_{t_a+\epsilon^{-2} (t-t_a)}$ for $t \in [t_a , t_{a}+T ]$. This follows from the fact that they are both Markovian Processes, with identical infinitesimal generators.

We now fix some arbitrary $\tilde{\epsilon} < 0$, and let $T$ be the constant of Lemma \ref{Lemma T constant}. Write $\Delta t = \epsilon^{-2}T$. First observe that
\begin{multline}
\mathbb{P}\big(\big|T_{\delta}^{-1}\int_0^{T_{\delta}} g(\bgamma_s) ds-\mathbb{E}^{P_*}[g] \big| \geq \delta \big)  \leq \mathbb{P}\big( \hat{\tau} < \exp(C_{\delta}\epsilon^{-2}) \big) \\
+\mathbb{P}\big(\big|T_{\delta}^{-1}\int_0^{T_{\delta}} g(\bgamma_s) ds-\mathbb{E}^{P_*}[g] \big| \geq \delta  \text{ and }\hat{\tau} \geq T_{\delta} \big). \label{eq: to show small}
\end{multline}
Using Lemma \ref{Theorem exponential stability time}, as long as $C_{\delta}$ is small enough,
\begin{equation}
\mathbb{P}\big( \hat{\tau} < \exp(C_{\delta}\epsilon^{-2}) \big) \leq \frac{1}{2}\exp(-C_{\delta}\epsilon^{-2}).
\end{equation}
It remains to show that
\begin{equation} \label{eq: to establish C delta epsilon minus 2}
\mathbb{P}\big(\big|T_{\delta}^{-1}\int_0^{T_{\delta}} g(\bgamma_s) ds-\mathbb{E}^{P_*}[g] \big| \geq \delta  \text{ and }\hat{\tau} \geq T_{\delta} \big) \leq \frac{1}{2}\exp(C_{\delta}\epsilon^{-2}) \big).
\end{equation}
By Chebyshev's Inequality, for some $\kappa > 0$,
\begin{multline}
\mathbb{P}\big(\big|T_{\delta}^{-1}\int_0^{T_{\delta}} g(\bgamma_s) ds-\mathbb{E}^{P_*}[g] \big| \geq \delta  \text{ and }\hat{\tau} \geq T_{\delta} \big) \\
\leq \mathbb{E}\bigg[\chi \lbrace \hat{\tau} \geq T_{\delta}\rbrace \bigg\lbrace \exp\bigg(\kappa \int_0^{T_{\delta}} g(\bgamma_s) ds-\kappa \hat{\tau} \mathbb{E}^{P_*}[g] -\kappa \hat{\tau} \delta  \bigg) \\+\exp\bigg(-\kappa\int_0^{T_{\delta}} g(\bgamma_s) ds+\kappa T_{\delta}\mathbb{E}^{P_*}[g] -\kappa  T_{\delta}\delta  \bigg) \bigg\rbrace\bigg]. \label{eq: end chebysehev}
\end{multline}
We next show that, for a particular choice of $\kappa$,
\begin{equation}
\mathbb{E}\bigg[\chi \lbrace \hat{\tau} \geq T_{\delta}\rbrace  \exp\bigg(\kappa \int_0^{T_{\delta}} g(\bgamma_s) ds-\kappa T_{\delta} \mathbb{E}^{P_*}[g] -\kappa T_{\delta} \delta  \bigg) \bigg] \leq \exp\big( -\text{Const}\times \epsilon^{-2} \big)
\end{equation}
and omit the analogous proof that
\begin{equation}
\mathbb{E}\bigg[\exp\bigg(-\kappa\int_0^{T_{\delta}} g(\bgamma_s) ds+\kappa T_{\delta}\mathbb{E}^{P_*}[g] -\kappa  T_{\delta} \delta  \bigg) \bigg] \leq \exp\big( -\text{Const}\times \epsilon^{-2} \big).
\end{equation}
Now define the random variable 
\[
H_a =\exp\bigg\lbrace \kappa  \big\lbrace \hat{\tau} \geq t_{a+1} \big\rbrace  \bigg( \int_{t_a}^{t_{a+1}} g(\bgamma_s) ds -  \Delta t \mathbb{E}^{P_*}[g]\bigg)  \bigg\rbrace,
\]
and define $\mathfrak{a}$ to be the random index such that $\hat{\tau} \in [t_{\mathfrak{a}}, t_{\mathfrak{a}+1})$. We thus find that
\begin{align*}
\mathbb{E}\big[\chi \lbrace \hat{\tau} \geq T_{\delta}\rbrace &\exp\big(\kappa \int_0^{T_{\delta}} g(\bgamma_s) ds-\kappa \hat{\tau} \mathbb{E}^{P_*}[g]  \big) \big]\\
&= \mathbb{E}\big[ \chi \lbrace \hat{\tau} \geq  T_{\delta}\rbrace  \prod_{a=0}^{\mathfrak{a}} H_a \times \exp\big(\kappa \int_{t_a}^{\hat{\tau}} g(\bgamma_s) ds-\kappa (\hat{\tau} -t_{\mathfrak{a}})\mathbb{E}^{P_*}[g]  \big) \big] \\
&\leq \mathbb{E}\big[ \chi \lbrace \hat{\tau} \geq T_{\delta}\rbrace  \prod_{a=0}^{\mathfrak{a}} H_a \big] \exp\big( 2 \kappa \Delta t \big),
\end{align*}
since $|g| \leq 1$ uniformly.  Furthermore 
\begin{multline*}
\mathbb{P}\big( \big| (\Delta t)^{-1} \int_{t_a}^{t_{a+1}} g(\bgamma_s) ds -  \mathbb{E}^{P_*}[g] \big| \geq \delta \text{ and } \hat{\tau} \geq t_{a+1}  \big) \\
\leq \mathbb{P}\big( \big| (\Delta t)^{-1} \int_{t_a}^{t_{a+1}} g(\tilde{\bgamma}_s) ds - \mathbb{E}^{P_*}[g] \big| \geq \delta / 2  \big) \\+
\mathbb{P}\big( (\Delta t)^{-1}\int_{t_a}^{t_{a+1}} \big| g(\bgamma_s) -g(\tilde{\bgamma}_s)  \big| ds \geq \delta / 2 \text{ and } \hat{\tau} \geq t_{a+1}  \big).
\end{multline*}
Since $\bupsilon_{\epsilon^2 (t-t_a)}$ has the same probability law as $\tilde{\bgamma}_{t-t_a}$, it follows that for any $\tilde{\epsilon} > 0$,
\begin{multline*}
\mathbb{P}\big( \big| (\Delta t)^{-1} \int_{t_a}^{t_{a+1}} g(\tilde{\bgamma}_s) ds - \mathbb{E}^{P_*}[g] \big| \geq \delta / 2  \big)
= \mathbb{P}\big( \big| T^{-1} \int_{0}^{T} g(\bupsilon^{\bgamma_a}_s) ds - \mathbb{E}^{P_*}[g] \big| \geq \delta / 2  \big) \leq \tilde{\epsilon},
\end{multline*}
thanks to Lemma \ref{Lemma T constant}, as long as $\epsilon$ is small enough.

Furthermore since $g$ has Lipschitz constant upperbounded by $m$ (noted in \eqref{eq: g lipschitz}),
\begin{multline*}
\mathbb{P}\big( (\Delta t)^{-1}\int_{t_a}^{t_{a+1}} \big| g(\bgamma_s) -g(\tilde{\bgamma}_s)  \big| ds \geq \delta / 2 \text{ and } \hat{\tau} \geq t_{a+1}  \big) \\ \leq \mathbb{P}\big( \sup_{t\in [t_a, t_{a+1}]} \|\bgamma_t -  \tilde{\bgamma}_t\| \geq \delta / (2m) \text{ and } \hat{\tau} \geq t_{a+1}  \big)
\leq \tilde{\epsilon},
\end{multline*}
using the result of Lemma \ref{Lemma beta tilde beta}. Let $\hat{\mathfrak{a}} = \lfloor (\Delta t)^{-1}T_{\delta} \rfloor - 1$.
The previous two results imply that
\begin{align}
\mathbb{E}\big[ \chi \lbrace \hat{\tau} \geq T_{\delta}\rbrace &\exp\big( -\kappa \hat{\tau} \delta \big) \prod_{a=0}^{\mathfrak{a}} H_a \big] \leq  \big[ 2 \tilde{\epsilon} \exp ( \kappa \Delta t ) + (1-2\tilde{\epsilon})\exp(\kappa \delta) \big]^{\hat{\mathfrak{a}}}\exp\big(-\kappa \delta T_{\delta} \big)\nonumber \\
&= \exp( \hat{\mathfrak{a}}\kappa \delta-\kappa \delta T_{\delta}  ) \big\lbrace 1 + 2\tilde{\epsilon}\big( \exp(\kappa \Delta t - \kappa \delta ) - 1 \big) \big\rbrace^{\hat{\mathfrak{a}}}\nonumber \\
&\leq \exp\big\lbrace \hat{\mathfrak{a}} \kappa \delta + 2\tilde{\epsilon}\hat{\mathfrak{a}} \big( \exp(\kappa \Delta t - \kappa \delta ) - 1 \big) -\kappa \delta  T_{\delta}\big\rbrace .
\end{align}
We substitute $\kappa = \epsilon^{2}$. We take $T$ to be large enough that
\begin{equation}
 \hat{\mathfrak{a}} \leq T_{\delta} / 3,
\end{equation}
and we take $\tilde{\epsilon}$ to be small enough that 
\begin{equation}
 2\tilde{\epsilon} \big( \exp( T - \epsilon^2 \delta ) - 1 \big) \leq \delta \epsilon^{-2}  T_{\delta} / 3,
\end{equation}
which is always possible, since $T_{\delta} = \exp( C_{\delta}\epsilon^{-2})$. In this way we find that
\[
\exp\big\lbrace \hat{\mathfrak{a}} \kappa \delta + 2\tilde{\epsilon}\hat{\mathfrak{a}} \big( \exp(\kappa \Delta t - \kappa \delta ) - 1 \big) -\kappa \delta  T_{\delta}\big\rbrace \leq \exp\big\lbrace - \epsilon^{-2}\delta \exp(-C_{\delta}\epsilon^{-2}) / 3\big\rbrace \\
\leq \frac{1}{2} \exp\big( -C_{\delta}\epsilon^{-2} \big),
\]
for small enough $\epsilon$. Substituting this bound into \eqref{eq: end chebysehev}, we have proved \eqref{eq: to establish C delta epsilon minus 2}, i.e.
\begin{equation}
\mathbb{P}\big(\big|T_{\delta}^{-1}\int_0^{T_{\delta}} g(\bgamma_s) ds-\mathbb{E}^{P_*}[g] \big| \geq \delta  \text{ and }\hat{\tau} \geq T_{\delta} \big) \leq \exp( -C_{\delta}\epsilon^{-2}) 
\end{equation}
We have thus established \eqref{eq: first result last theorem}.

We now prove \eqref{eq: second result last theorem}.
\begin{proof}
Notice that, substituting the SDE for $\bgamma_t$ in Lemma \ref{Lemma gamma SDE} and applying a union of events bound,
\begin{multline}
\mathbb{P}\big( \sup_{1\leq i \leq m}\big| \epsilon^{-2}  T_{\delta}^{-1} \gamma^i_{T_{\delta}}-\mathbb{E}^{P_*}[ \tilde{\mathcal{V}}_i] \big| > \delta \big)  \leq \mathbb{P}\big( \hat{\tau} \leq T_{\delta} \big)\\
+ \mathbb{P}\big(  \hat{\tau} > T_{\delta} \text{ and } \sup_{1\leq i \leq m}\big| 1/(2T_{\delta})\sum_{j=1}^\infty \int_0^{T_{\delta}} D^{(2)}\Theta_i(u_t)\cdot B(u_t)e_j \cdot B(u_t)e_j dt-\mathbb{E}^{P_*}[ \tilde{\mathcal{V}}_i] \big| > \delta / 2 \big) \\ + \mathbb{P}\big( \hat{\tau} > T_{\delta} \text{ and } \sup_{1\leq i \leq m}\big| \epsilon^{-1}  T_{\delta}^{-1} \int_0^{T_{\delta}}D\Theta_i(u_t)dW_t \big| > \delta / 2 \big). \label{eq: last equatino split}
\end{multline}
We have already seen that $\mathbb{P}\big( \hat{\tau} \leq T_{\delta} \big) \leq \exp(-C_{\delta}\epsilon^{-2})$. For the second term, employing a union-of-events bound, and Chernoff's Inequality, for a constant $\kappa > 0$,
\begin{multline}
 \mathbb{P}\big(\hat{\tau} > T_{\delta} \text{ and } \sup_{1\leq i \leq m}\big| \epsilon^{-1}  T_{\delta}^{-1} \int_0^{T_{\delta}}D\Theta_i(u_t)dW_t \big| > \delta / 2 \big)
 \leq \\ \sum_{1\leq i \leq m}\mathbb{P}\big( \hat{\tau} \geq T_{\delta} \text{ and }\epsilon^{-1}  T_{\delta}^{-1} \int_0^{T_{\delta}}D\Theta_i(u_t) dW_t  > \delta / 2 \big)+\\
 \sum_{1\leq i \leq m} \mathbb{P}\big(\hat{\tau} > T_{\delta} \text{ and } \epsilon^{-1}  T_{\delta}^{-1} \int_0^{T_{\delta}}D\Theta_i(u_t) dW_t  < -\delta / 2 \big)\\
 \leq \sum_{1\leq i \leq m}\mathbb{E}\big[\chi \lbrace \hat{\tau} > T_{\delta}\rbrace \exp\big\lbrace \kappa\int_0^{T_{\delta}}D\Theta_i(u_t)dW_t - \kappa \delta \epsilon T_{\delta} / 2 \big\rbrace +\\ \chi \lbrace \hat{\tau} > T_{\delta}\rbrace \exp\big\lbrace -\kappa\int_0^{T_{\delta}}D\Theta_i(u_t)dW_t - \kappa \delta \epsilon T_{\delta} / 2 \big\rbrace \big] .\label{eq: to show penultimate}
\end{multline}
Using the bound in Lemma \ref{Lemma Theta Derivative Approximation}, as long as $\zeta$ is sufficiently small, $|D\Theta_i(u_t)|$ is uniformly bounded as long as $t\leq \hat{\tau}$, and we thus find that
\begin{align*}
\frac{1}{2}  \int_0^{T_{\delta}} \big\lbrace D\Theta^i(u_t) \big\rbrace^2 dt  \leq \bar{C}T_{\delta},
\end{align*}
for a constant $\bar{C} > 0$. We thus find that
\begin{multline*}
\mathbb{E}\big[ \chi\big\lbrace \hat{\tau} > T_{\delta} \big\rbrace \exp\big\lbrace \kappa\int_0^{T_{\delta}}D\Theta^i(u_t) dW_t - \kappa \delta \epsilon T_{\delta} / 2 \big\rbrace \big]
\leq \mathbb{E}\big[ \chi\big\lbrace \hat{\tau} > T_{\delta} \big\rbrace\exp\big\lbrace \kappa\int_0^{T_{\delta}}D\Theta^i(u_t) dW_t \\  - \frac{ \kappa^2}{2} \int_0^{T_{\delta}}\lbrace D\Theta^i(u_t) \rbrace^2 dt + \kappa^2\bar{C}T_{\delta} - \kappa \delta \epsilon T_{\delta} / 2 \big\rbrace \big] 
= \exp\big\lbrace T_{\delta} \kappa^2 \bar{C} - \kappa \delta \epsilon T_{\delta} / 2 \big\rbrace,
\end{multline*}
by Girsanov's Theorem. We choose $\kappa = \delta \epsilon / (4\bar{C})$, and we obtain that 
\begin{equation}
\mathbb{E}\big[ \chi\big\lbrace \hat{\tau} > T_{\delta} \big\rbrace \exp\big\lbrace \kappa\int_0^{T_{\delta}}D\Theta^i(u_t) dW_t - \kappa \delta \epsilon T_{\delta} / 2 \big\rbrace \big] \leq \exp\big\lbrace -\delta^2 \epsilon^2 T_{\delta} / (8 \bar{C}) \big\rbrace .
\end{equation}
Since each of the terms in \eqref{eq: to show penultimate} can be bounded in the same manner as the above, we obtain that
\begin{equation}
 \mathbb{P}\big( \sup_{1\leq i \leq m}\big| \epsilon^{-1}  T_{\delta}^{-1} \int_0^{T_{\delta}}D\Theta^i(u_t) dW_t \big| > \delta / 2 \big) \leq 2m \exp\big\lbrace -\delta^2 \epsilon^2 T_{\delta} / (8 \bar{C}) \big\rbrace .
\end{equation}
Since $T_\delta = \exp( C_{\delta}\epsilon^{-2})$, in the limit as $\epsilon \to 0$, $\epsilon^2 T_{\delta} \gg \epsilon^{-2}$. For the remaining term in \eqref{eq: last equatino split}, using a union of events bound,
\begin{multline}\label{eq: temporary drift lipschitz 1}
\mathbb{P}\big(\hat{\tau} > T_{\delta} \text{ and }  \sup_{1\leq i \leq m}\big| 1/(2T_{\delta})\sum_{j=1}^\infty \int_0^{T_{\delta}} D^{(2)}\Theta_i(u_t)\cdot B(u_t)e_j \cdot B(u_t)e_j dt-\mathbb{E}^{P_*}[ \tilde{\mathcal{V}}_i] \big| > \delta / 2 \big) \leq \\ \mathbb{P}\big( \hat{\tau} > T_{\delta} \text{ and }\sup_{1\leq i \leq m}\big\lbrace T_{\delta}^{-1} \int_0^{T_{\delta}} \big|  \frac{1}{2}\sum_{j=1}^\infty D^{(2)}\Theta_i(u_t)\cdot B(u_t)e_j \cdot B(u_t)e_j- \tilde{\mathcal{V}}_i(\bgamma_t) \big| dt \big\rbrace > \delta / 4 \big) \\ +
\mathbb{P}\big(\hat{\tau} > T_{\delta} \text{ and }  \sup_{1\leq i \leq m}\big| T_{\delta}^{-1} \int_0^{T_{\delta}}\tilde{\mathcal{V}}_i(\bgamma_t )dt-\mathbb{E}^{P_*}[ \tilde{\mathcal{V}}_i] \big| > \delta / 4 \big)
\end{multline}
Now for small enough $\zeta$, thanks to Lemma \ref{Lemma Theta Derivative Approximation}, whenever $t \leq \hat{\tau}$
\[
 \big|  \frac{1}{2}\sum_{j=1}^\infty D^{(2)}\Theta_i(u_t)\cdot B(u_t)e_j \cdot B(u_t)e_j- \tilde{\mathcal{V}}_i(\bgamma_t) \big| \leq \delta / 4 ,
\]
and the first term on the right hand side of \eqref{eq: temporary drift lipschitz 1} has probability zero.

Using the first result of the lemma, i.e. \eqref{eq: first result last theorem}, since $\tilde{\mathcal{V}}_i$ is smooth and bounded, 
\begin{align*}
\mathbb{P}\big(\hat{\tau} > T_{\delta} \text{ and }  \sup_{1\leq i \leq m}\big| T_{\delta}^{-1} \int_0^{T_{\delta}}\tilde{\mathcal{V}}_i(\bgamma_t )dt-\mathbb{E}^{P_*}[ \tilde{\mathcal{V}}_i] \big| > \delta / 4 \big) \leq \exp\big( -C_{\delta}X^{-2} \epsilon^{-2} \big),
\end{align*}
where
\[
X = \sup_{1\leq i,j,k \leq m, \balpha\in\mathbb{R}^m} \big\lbrace \big| \tilde{\mathcal{V}}_i(\balpha) \big|,\big| \partial / \partial \alpha^j \tilde{\mathcal{V}}_i (\balpha)\big| , \big| \partial^2 / \partial \alpha^j \partial \alpha^k \tilde{\mathcal{V}}_i (\balpha)\big| \big\rbrace .
\]
In summary, collecting all of the above terms, we have proved that for small enough $\epsilon > 0$, there is a constant such that
\begin{equation}
\mathbb{P}\big( \sup_{1\leq i \leq m}\big| \epsilon^{-2}  T_{\delta}^{-1} \beta^i_{T_{\delta}}-\mathbb{E}^{P_*}[ \tilde{\mathcal{V}}_i] \big| > \delta \big) \leq  \exp\big( -\text{Const} \times\epsilon^{-2} \big).
\end{equation}
\end{proof}

\begin{lemma}\label{Lemma beta tilde beta}
For any $\bar{\epsilon},\tilde{\epsilon} > 0$, there exists $\zeta > 0$ and $\epsilon_0 , C_{\delta} > 0$ such that for all $\epsilon < \epsilon_0$,
\begin{align}
\mathbb{P}\big( \sup_{t\in [t_a , t_{a+1}]} \| \bgamma_t - \tilde{\bgamma}_t\| \geq \bar{\epsilon} \text{ and }\sup_{t\in [t_a , t_{a+1}]} \norm{v_t} \leq \zeta  \big) &<\tilde{\epsilon} \label{eq: to establish last lemma }\\
\mathbb{P}\big( \hat{\tau} < T_{\delta}  \big) &\leq \frac{1}{2}\exp\big(-C_{\delta} \epsilon^{-2} \big) .
\end{align}
\end{lemma}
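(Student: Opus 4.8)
My plan splits along the two displayed inequalities.

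\medskip
\noindent\emph{The second inequality.} I would observe that it is essentially Theorem~\ref{Theorem exponential stability time} in disguise, and dispatch it first. Since $u_0=\varphi_{\bar{\bbeta}}$ we have $\norm{v_0}=0$, so by continuity of $t\mapsto\norm{v_t}$ the event $\{\hat{\tau}<T_{\delta}\}$ is contained in $\{\sup_{t\in[0,T_{\delta}]}\norm{v_t}>\zeta/2\}$. Applying Theorem~\ref{Theorem exponential stability time} with $\kappa=\zeta/2$ (admissible for our fixed $\zeta$ and a suitable exponent $p\in[0,2)$ once $\epsilon_0$ is small), and recalling $T_{\delta}=\exp(C_{\delta}\epsilon^{-2})$,
\[
\mathbb{P}\big(\hat{\tau}<T_{\delta}\big)\ \le\ T_{\delta}\exp\big(-C\epsilon^{-2}\zeta^{2}/4\big)\ =\ \exp\big((C_{\delta}-C\zeta^{2}/4)\,\epsilon^{-2}\big),
\]
where $C$ is the constant of Theorem~\ref{Theorem exponential stability time}; choosing $C_{\delta}<C\zeta^{2}/8$ makes this $\le\tfrac12\exp(-C_{\delta}\epsilon^{-2})$ for all sufficiently small $\epsilon$.

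\medskip
\noindent\emph{The first inequality: set-up and the key estimate.} Recall $t_{a+1}-t_a=\Delta t=\epsilon^{-2}T$ with $T$ the ($\epsilon$-independent) constant from Lemma~\ref{Lemma T constant}, and write the SDE for $\bgamma_t=\Theta(u_t)$ from Lemma~\ref{Lemma gamma SDE} as $d\bgamma_t=\epsilon^{2}\mathfrak{V}(u_t)\,dt+\epsilon\,\mathfrak{Y}(u_t)\,dW_t$ with $\mathfrak{Y}(u_t)\in\mathcal{L}(H,\mathbb{R}^m)$. The plan is a Gronwall estimate for $\boldsymbol{D}_t:=\bgamma_t-\tilde{\bgamma}_t$ on $[t_a,t_{a+1}]$, using $\boldsymbol{D}_{t_a}=0$ (since $\tilde{\bgamma}_{t_a}=\bgamma_{t_a}$) and that $\bgamma$ and $\tilde{\bgamma}$ are driven by the same $W$. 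The analytic heart of this is a coefficient-comparison estimate: there is a modulus of continuity $\omega$, $\omega(0^{+})=0$, with
\[
\norm{\mathfrak{V}(u)-\tilde{\mathcal{V}}(\Theta(u))}+\norm{\mathfrak{Y}(u)-\tilde{\mathcal{Y}}(\Theta(u))}_{HS}\ \le\ \omega\big(\norm{u-\varphi_{\Theta(u)}}\big)\qquad\text{for all }u\in\mathcal{R}.
\]
I would prove this by inserting the leading-order expansions \eqref{eq: Theta approximation 1}--\eqref{eq: Theta approximation 2} of $D\Theta^{i}$ and $D^{(2)}\Theta^{i}$ from Lemma~\ref{Lemma Theta Derivative Approximation}: summed against the $B(u)e_j$, their errors are $O(\norm{u-\varphi_{\Theta(u)}})$ because $\sup_{x\in E}\sum_j\norm{B(x)e_j}^{2}<\infty$ (a consequence of Assumption~\ref{Assumption Theta} together with $\norm{B}_{\mathcal{L}}\le C_B$). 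What remains is to compare the resulting expressions — quadratic forms in the $B(u)e_j$, summed over $j$ — with $\tilde{\mathcal{V}}(\Theta(u)),\tilde{\mathcal{Y}}(\Theta(u))$, which carry $B(\varphi_{\Theta(u)})$ in place of $B(u)$. For $\tilde{\mathcal{Y}}$ and for every term of $\tilde{\mathcal{V}}$ except $\Gamma_{QV}$, Parseval's identity collapses each sum to an inner product $\langle B^{*}(\cdot)\chi_1,B^{*}(\cdot)\chi_2\rangle$ of $B^{*}$ applied to \emph{fixed} vectors $\chi_1,\chi_2$ built from $\psi^{i}_{\balpha},\psi^{i}_{\balpha,j},\varphi_{\balpha,j}$, so replacing $B(u)$ by $B(\varphi_{\Theta(u)})$ costs only $O(\norm{B(u)-B(\varphi_{\Theta(u)})}_{\mathcal{L}})=O(\norm{u-\varphi_{\Theta(u)}})$ by the Lipschitz property of $B$; for the $\Gamma_{QV}$-type term $\sum_j\int_0^{\infty}\langle\psi^{i}_{\balpha},D^{(2)}f(\varphi_{\balpha})\cdot V_{\balpha}(s)B(u)e_j\cdot V_{\balpha}(s)B(u)e_j\rangle\,ds$ the same replacement costs $O(\norm{B(u)-B(\varphi_{\balpha})}_{HS})$, and Assumption~\ref{Assumption Theta} with $\norm{B(u)-B(\varphi_{\balpha})}_{\mathcal{L}}\le C_B\norm{u-\varphi_{\balpha}}$ only gives $\norm{B(u)-B(\varphi_{\balpha})}_{HS}\le\omega(\norm{u-\varphi_{\balpha}})$ for a modulus, not a Lipschitz rate — which forces the modulus into the comparison estimate.

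\medskip
\noindent\emph{The first inequality: Gronwall and conclusion.} I would set $\sigma_a=\inf\{t\ge t_a:\norm{v_t}>\zeta\}$, $\sigma''=\inf\{t\ge t_a:\norm{\boldsymbol{D}_t}\ge\bar{\epsilon}\}$, $\rho=\sigma_a\wedge\sigma''\wedge t_{a+1}$, and take $\zeta\le\delta$ so that $u_t\in\mathcal{R}$ (hence $\bgamma_t=\Theta(u_t)$ and the quantities above are defined) for $t\le\rho$. On $[t_a,\rho]$ one has $\norm{u_s-\varphi_{\bgamma_s}}\le 2\zeta$ by \eqref{eq: to establish error Theta Phi} (after shrinking $\zeta$), so, writing $\mathfrak{V}(u_s)-\tilde{\mathcal{V}}(\tilde{\bgamma}_s)=[\mathfrak{V}(u_s)-\tilde{\mathcal{V}}(\bgamma_s)]+[\tilde{\mathcal{V}}(\bgamma_s)-\tilde{\mathcal{V}}(\tilde{\bgamma}_s)]$ and similarly for $\mathfrak{Y}$, the comparison estimate plus the global Lipschitz constants of $\tilde{\mathcal{V}}$ and of $\tilde{\mathcal{Y}}$ in the Hilbert--Schmidt norm (finite because these maps are periodic and assembled from the globally Lipschitz data $\balpha\mapsto\psi^{i}_{\balpha,\cdot},\varphi_{\balpha,\cdot},B(\varphi_{\balpha})$) give $\norm{\mathfrak{V}(u_s)-\tilde{\mathcal{V}}(\tilde{\bgamma}_s)}\le\omega(2\zeta)+L\norm{\boldsymbol{D}_s}$ and the same in the HS norm for $\mathfrak{Y}$. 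Squaring the equation $\boldsymbol{D}_{t\wedge\rho}=\int_{t_a}^{t\wedge\rho}\epsilon^{2}[\mathfrak{V}(u_s)-\tilde{\mathcal{V}}(\tilde{\bgamma}_s)]\,ds+\int_{t_a}^{t\wedge\rho}\epsilon[\mathfrak{Y}(u_s)-\tilde{\mathcal{Y}}(\tilde{\bgamma}_s)]\,dW_s$, using Cauchy--Schwarz on the drift, Doob's $L^{2}$-inequality and the Ito isometry on the martingale, and $t\wedge\rho-t_a\le\Delta t$, I would obtain for $h(t):=\mathbb{E}[\sup_{t_a\le r\le t\wedge\rho}\norm{\boldsymbol{D}_r}^{2}]$ an inequality of the form
\[
h(t)\ \le\ c_1(T^{2}+T)\,\omega(2\zeta)^{2}\ +\ c_2\,\epsilon^{2}T\int_{t_a}^{t}h(r)\,dr ,
\]
with $c_1,c_2$ depending only on the structural constants; Gronwall then yields $h(t_{a+1})\le c_1(T^{2}+T)\omega(2\zeta)^{2}\exp(c_2T^{2})=:K(T)\,\omega(2\zeta)^{2}$, the crucial point being that the exponent $c_2\epsilon^{2}T\cdot\Delta t=c_2T^{2}$ is \emph{independent of $\epsilon$}. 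On $\{\sup_{[t_a,t_{a+1}]}\norm{v_t}\le\zeta\}$ one has $\sigma_a>t_{a+1}$, so if additionally $\sup_{[t_a,t_{a+1}]}\norm{\boldsymbol{D}_t}\ge\bar{\epsilon}$ then $\rho=\sigma''\le t_{a+1}$ and $\norm{\boldsymbol{D}_{\rho}}=\bar{\epsilon}$ (continuity, $\boldsymbol{D}_{t_a}=0$), whence $\sup_t\norm{\boldsymbol{D}_{t\wedge\rho}}\ge\bar{\epsilon}$; Chebyshev's inequality then gives
\[
\mathbb{P}\Big(\sup_{t\in[t_a,t_{a+1}]}\norm{\bgamma_t-\tilde{\bgamma}_t}\ge\bar{\epsilon}\ \text{ and }\ \sup_{t\in[t_a,t_{a+1}]}\norm{v_t}\le\zeta\Big)\ \le\ \bar{\epsilon}^{-2}K(T)\,\omega(2\zeta)^{2},
\]
and I would finish by choosing $\zeta$ (also $\le\delta$) small enough that the right-hand side is $<\tilde{\epsilon}$, then fixing $C_{\delta}$ and $\epsilon_0$ as in the second inequality.

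\medskip
\noindent\emph{Main obstacle.} I expect the coefficient-comparison estimate of the second paragraph to be the real work: it is the only place where the infinite series $\sum_j$ defining the $\bgamma$-SDE coefficients must be controlled uniformly — reducing most terms to $B^{*}$ acting on fixed vectors via Parseval and invoking the uniform tail decay of Assumption~\ref{Assumption Theta} — and where one must settle for a modulus of continuity rather than a rate for the quadratic-variation contribution $\Gamma_{QV}$, since $\norm{B(u)-B(\varphi_{\Theta(u)})}_{HS}$ is not Lipschitz-controlled by $\norm{u-\varphi_{\Theta(u)}}$ under the stated hypotheses. Once that estimate is in hand, the stopping-time bookkeeping, the Gronwall/Doob argument, and the $\epsilon$-independence of the Gronwall exponent (which is what makes the block-wise comparison uniform over the long, length-$\epsilon^{-2}T$, interval) are routine.
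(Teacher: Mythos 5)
Your proof is correct and is, in substance, the argument the paper intends but only sketches: the second inequality is dispatched by Theorem~\ref{Theorem exponential stability time}, and the first by comparing the isochronal-phase SDE coefficients from Lemmas~\ref{Lemma gamma SDE} and~\ref{Lemma Theta Derivative Approximation} with $(\tilde{\mathcal{V}},\tilde{\mathcal{Y}})$ at $\Theta(u_s)$, then running a block-wise Gronwall/Doob estimate on $\bgamma_t-\tilde\bgamma_t$ over a single interval of length $\Delta t=\epsilon^{-2}T$, the key being that the Gronwall exponent $c_2T^2$ is $\epsilon$-free. Where you genuinely depart from what the paper writes, you have found and fixed a small slip: the paper's one recorded line asserts the Lipschitz rate $\|\tfrac12\sum_j D^{(2)}\Theta_i(u_t)\cdot B(u_t)e_j\cdot B(u_t)e_j-\tilde{\mathcal{V}}_i(\bgamma_t)\|=O(\|u_t-\varphi_{\bgamma_t}\|)$, whereas under the stated hypotheses only a modulus of continuity is available. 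You isolate the reason precisely: every term \emph{except} $\Gamma_{QV}$ collapses by Parseval to $B^{*}(\cdot)$ applied to fixed $H$-vectors and so inherits the operator-norm Lipschitz rate of $B$; but $\Gamma_{QV}$ is an honest trace $\int_0^\infty\operatorname{tr}\bigl(B^{*}(\cdot)V_{\balpha}^{*}(s)Q^i_{\balpha}V_{\balpha}(s)B(\cdot)\bigr)\,ds$ (with $Q^i_{\balpha}$ the bounded operator representing $\langle\psi^i_{\balpha},D^{(2)}f(\varphi_{\balpha})\cdot\,\cdot\,\cdot\,\rangle$), and comparing it under $B(u)\to B(\varphi_{\balpha})$ needs Hilbert--Schmidt control of $V_{\balpha}(s)[B(u)-B(\varphi_{\balpha})]$; with only the operator-norm Lipschitz hypothesis plus the uniform tail decay of Assumption~\ref{Assumption Theta}, splitting the $j$-sum at a level $M$ gives a modulus $\omega$, not a rate. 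You correctly note this costs nothing for the lemma, since the conclusion is a non-quantitative $<\tilde\epsilon$ and your bound $\bar{\epsilon}^{-2}K(T)\,\omega(2\zeta)^{2}$ tends to $0$ as $\zeta\downarrow 0$. In short: same route as the paper, carefully written out, with a correct downgrade of the stated rate to a modulus for the $\Gamma_{QV}$ piece.
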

\begin{proof}
It follows from Lemma \ref{Lemma Theta Derivative Approximation} that
\begin{equation}
\| \frac{1}{2}\sum_{j=1}^\infty D^{(2)}\Theta_i(u_t) \cdot B(u_t)e_j \cdot  B(u_t)e_j  - \tilde{\mathcal{V}}_i(\bgamma_t) \| = O\big( \norm{u - \varphi_{\bgamma_t}} \big) \\
= O\big( \norm{u - \varphi_{\bbeta_t}} \big).
\end{equation}
The rest of the proof then follows using similar techniques to the rest of the paper, and is omitted.
\end{proof}



\bibliographystyle{siam}
\bibliography{StochasticWaves2}

\end{document}